\documentclass[12pt]{article}

\pagestyle{myheadings}

\usepackage{amsmath,amssymb,amsthm,amsfonts,graphics,graphicx,listings,float,titling,bm,geometry,ulem}
\usepackage[hidelinks]{hyperref}
\usepackage{enumerate}
\geometry{verbose,tmargin=1in,bmargin=1in,lmargin=1in,rmargin=1in}
\usepackage{stmaryrd}
\theoremstyle{plain}
\newtheorem{theorem}{Theorem}
\newtheorem{lemma}[theorem]{Lemma}

\newtheorem{proposition}[theorem]{Proposition}
\newtheorem{claim}[theorem]{Claim}

\newcommand{\floor}[1]{\left\lfloor #1 \right\rfloor}
\newcommand{\ceil}[1]{\left\lceil #1 \right\rceil}
\newcommand{\eps}{\varepsilon}

\newcommand{\ex}{\text{ex}}

\usepackage{thm-restate}

\usepackage{fontawesome}
\newcommand{\flags}{\faCogs}

\newcommand{\vc}[1]{\ensuremath{\vcenter{\hbox{#1}}}}

\usepackage{tikz}
\tikzset{flag_pic/.style={scale=1}}  
\tikzset{unlabeled_vertex/.style={inner sep=1.7pt, outer sep=0pt, circle, fill}} 
\tikzset{labeled_vertex/.style={inner sep=2.2pt, outer sep=0pt, rectangle, fill=gray, draw=black}} 
\tikzset{edge_color0/.style={color=black,line width=1.2pt,opacity=0.5}} 
\tikzset{edge_color1/.style={color=red,  line width=1.2pt,opacity=1}} 
\tikzset{edge_color2/.style={color=blue, line width=1.2pt,opacity=1}} 
\tikzset{edge_color3/.style={color=green!80!black,line width=1.2pt}} 
\tikzset{edge_color4/.style={color=orange,  line width=1.2pt}} 
\tikzset{edge_color5/.style={color=blue, line width=1.2pt,dotted}} 
\tikzset{edge_color6/.style={color=green, line width=1.2pt,dotted}} 
\tikzset{edge_color7/.style={color=orange, line width=1.2pt}} 
\tikzset{edge_color8/.style={color=gray, line width=1.2pt}} 
\tikzset{edge_colorroot/.style={color=red, line width=1.7pt}} 
\tikzset{edge_thin/.style={color=black}} 
\tikzset{edge_hidden/.style={color=black,dotted,opacity=0}} 
\tikzset{vertex_color1/.style={inner sep=1.7pt, outer sep=0pt, draw, circle, fill=red}} 
\tikzset{vertex_color2/.style={inner sep=1.7pt, outer sep=0pt, draw, circle, fill=blue}} 
\tikzset{vertex_color3/.style={inner sep=1.7pt, outer sep=0pt, draw, circle, fill=green}} 
\tikzset{vertex_color4/.style={inner sep=1.7pt, outer sep=0pt, draw, circle, fill=pink}} 
\tikzset{labeled_vertex_color1/.style={inner sep=2.2pt, outer sep=0pt, draw, rectangle, fill=red}} 
\tikzset{labeled_vertex_color2/.style={inner sep=2.2pt, outer sep=0pt, draw, rectangle, fill=blue}} 
\tikzset{labeled_vertex_color3/.style={inner sep=2.2pt, outer sep=0pt, draw, rectangle, fill=green!70!black}} 
\tikzset{labeled_vertex_color4/.style={inner sep=2.2pt, outer sep=0pt, draw, rectangle, fill=pink}} 

\def\outercycle#1#2{ 
\pgfmathtruncatemacro{\plusone}{#1+1} 
\pgfmathtruncatemacro{\minusone}{#1-1} 
\draw  \foreach \x in {0,...,\minusone}{(0.5*\x,0) coordinate(x\x)};}

\def\drawhypervertex#1#2{ \pgfmathtruncatemacro{\plusone}{#1+1}  \draw[edge_color2] (x#1)++(0,-0.2-0.2*#2)+(-0.2,0) -- +(0.2,0) +(0,0) node[fill=white,outer sep=0,inner sep=0]{\tiny \plusone};} 

\def\drawhypervertexcolor#1#2#3{ \pgfmathtruncatemacro{\plusone}{#1+1} \draw[edge_color#3] (x#1)++(0,-0.2-0.2*#2)+(-0.2,0) -- +(0.2,0)  +(0,0) node[fill=white,outer sep=0,inner sep=0]{\tiny \plusone};} 

\def\drawhyperedge#1#2{ \draw[dotted] (x0)++(0,-0.2-0.2*#1)--++(0.5*#2-0.5,0);
\path (0,-0.4-0.2*#1) -- (0,0); 
}

\def\labelvertex#1{\pgfmathtruncatemacro{\vertexlabel}{#1+1 } \draw (x#1) node{\color{yellow}\tiny\vertexlabel}; }

\tikzset{
vtx/.style={inner sep=1.1pt, outer sep=0pt, circle, fill,draw}, 
vtxl/.style={inner sep=1.1pt, outer sep=0pt, rectangle, fill=yellow,draw=black}, 
hyperedgeP/.style={fill=pink,opacity=0.5,draw=black}, 
hyperedgeB/.style={fill=blue,opacity=0.5,draw=black}, 
}

\newcommand{\CM}[1]{\textcolor{black}{#1}}

\usepackage{mathtools}

\title{The Hypergraph {T}ur\'{a}n Densities of 
Tight Cycles Minus an Edge}
\author{Bernard Lidick\'{y}\thanks{Department of Mathematics, Iowa State University, Ames, IA. E-mail: \texttt{lidicky@iastate.edu}. Research of this author is supported in part by NSF grant DMS-2152490 and Scott Hanna professorship.}
\and
Connor Mattes\thanks{Sandia National Laboratories. Livermore, California, USA. E-mail: \texttt{clmatte@sandia.gov}. Sandia National Laboratories is a multimission laboratory managed and operated by National Technology \& Engineering Solutions of Sandia, LLC, a wholly owned
subsidiary of Honeywell International Inc., for the U.S. Department of Energy's National Nuclear Security Administration under contract DE-NA0003525. Research on this project was partially supported by NSF grant DMS-2152498.}
\and
Florian Pfender\thanks{Department of Mathematical and Statistical Sciences, University of Colorado Denver. E-mail: \texttt{Florian.Pfender@ucdenver.edu}. Research is partially supported by NSF grant DMS-2152498.}
}






\newcommand{\x}[1]{\vc{\tikz\draw[black] (0,0) circle (0.8ex) node{\scriptsize$#1$};}}

\newcommand{\exx}[2]{\vc{\tikz\draw[black,fill=white] (0.8ex,0)--(0.5cm-0.8ex,0) (0,0) circle (0.8ex) node{\scriptsize$#1$}  (0.5,0) circle (0.8ex) node{\scriptsize$#2$};}}
\newcommand{\e}[1]{{\exx #1}}

\newcommand{\pxx}[3]{\vc{\tikz\draw[black,fill=white] 
(0.8ex,0)--(0.5cm-0.8ex,0) 
(0.5cm+0.8ex,0)--(1cm-0.8ex,0) 
(0,0) circle (0.8ex) node{\scriptsize$#1$}
(0.5,0) circle (0.8ex) node{\scriptsize$#2$}
(1.0,0) circle (0.8ex) node{\scriptsize$#3$}
;}}
\newcommand{\p}[1]{{\pxx #1}}

\newcommand{\kxx}[3]{\vc{\tikz\draw[black,fill=white] 
(0.8ex,0)--(0.5cm-0.8ex,0) 
(0.5cm+0.8ex,0)--(1cm-0.8ex,0) 
(0,0.8ex) to[bend left](1cm, 0.8ex) 

(0,0) circle (0.8ex) node{\scriptsize$#1$}
(0.5,0) circle (0.8ex) node{\scriptsize$#2$}
(1.0,0) circle (0.8ex) node{\scriptsize$#3$}
;}}
\newcommand{\kkk}[1]{{\kxx #1}}

\begin{document}

\maketitle











\begin{abstract}
    A tight $\ell$-cycle minus an edge $C_\ell^-$ is the $3$-graph on the vertex set $[\ell]$, where any three consecutive vertices in the string $123\ldots\ell 1$ form an edge. We show that for every $\ell\ge 5$, $\ell$ not divisible by $3$, the extremal number is
    \[
    \ex\left(C_\ell^-,n\right)=\tfrac1{24}n^3+O(n\ln n)=\left(\tfrac14+o(1)\right){n\choose 3}.
    \]
    We 
    determine the extremal graph up to $O(n)$ edge edits.
\end{abstract}

\section{Introduction}\label{intro}

Maybe the oldest and most fundamental question in extremal hypergraph theory is to maximize the number of edges in an $r$-uniform hypergraph (aka $r$-graph) on $n$ vertices, which does not contain a given hypergraph $H$ as a subgraph. The resulting function is called the {\em extremal number} of $H$, and denoted by $\ex(H,n)$.

We can normalize this quantity, and an easy averaging argument shows that the limit
\[
\pi(H)=\lim_{n\to\infty}\frac{\ex(H,n)}{{n\choose r}}
\]
exists for every $r$-graph $H$. This limit is called the {\em Tur\'an density} of $H$. For $2$-graphs, a celebrated theorem by Erd\H{o}s, Stone, and Simonovits  relates the Tur\'an density to the chromatic number $\chi(H)$.
\begin{theorem}[\cite{ErdosSimonovits, ErdosStone}]
    For every $2$-graph $H$, $\pi(H)=\frac{\chi(H)-2}{\chi(H)-1}$.
\end{theorem}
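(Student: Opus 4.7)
The plan is to prove the two bounds separately, with the upper bound being the substantive half.

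For the lower bound $\pi(H)\ge \frac{\chi(H)-2}{\chi(H)-1}$, I would exhibit the Tur\'an graph $T_{\chi(H)-1}(n)$ — the balanced complete $(\chi(H)-1)$-partite graph on $n$ vertices — as an $H$-free construction. It is $(\chi(H)-1)$-colorable and $H$ requires $\chi(H)$ colors, so it cannot contain $H$. A direct count gives $|E(T_{\chi(H)-1}(n))| = \bigl(1-\tfrac{1}{\chi(H)-1}+o(1)\bigr)\binom{n}{2}$.

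For the upper bound the strategy is to reduce to the Erd\H{o}s--Stone theorem: $\pi(K_r(t))=\frac{r-2}{r-1}$, where $K_r(t)$ denotes the complete $r$-partite graph with all parts of size $t$. Taking $r=\chi(H)$ and $t=|V(H)|$, a proper $\chi(H)$-coloring of $H$ embeds $H$ into $K_r(t)$, so $\pi(H)\le \pi(K_r(t))$ and the desired bound follows. Thus the essential content is Erd\H{o}s--Stone.

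For Erd\H{o}s--Stone I would induct on $r$. The base case $r=2$ is the K\H{o}v\'ari--S\'os--Tur\'an bound $\ex(K_{2,t},n)=O(n^{2-1/t})$, which yields $\pi(K_2(t))=0$. For the inductive step, given $G$ on $n$ vertices with edge density exceeding $\frac{r-2}{r-1}+\eps$, first delete vertices of abnormally small degree to arrange that $G$ has minimum degree at least $\bigl(\frac{r-2}{r-1}+\eps/2\bigr)n$. The induction produces a copy of $K_{r-1}(s)$ inside $G$ with $s$ chosen very large relative to $t$. Counting edges between a vertex outside the copy and the $r-1$ parts, a double-counting/pigeonhole step shows many outside vertices have $\ge s/2$ neighbors in each of the $r-1$ parts. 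A further pigeonhole on which $s/2$-subsets are hit yields $t$ outside vertices with a common neighborhood of size $\ge t$ in every part; together these give the desired $K_r(t)$.

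The hard part is calibrating the quantifiers so that the pigeonhole succeeds: $s$ must be chosen huge compared to $t$ to survive the binomial $\binom{s}{s/2}$ pigeon-losses in each of the $r-1$ parts, and $n$ must in turn be large compared to $s$ so that the inductive step produces that $s$. These dependencies are delicate but, since the statement is asymptotic in $n$, the thresholds can be fixed in the right order ($\eps$, then $t$, then $s=s(t,\eps,r)$, then $n_0=n_0(s)$), and the limit defining $\pi(H)$ absorbs them.
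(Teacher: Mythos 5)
This statement is a classical background result that the paper cites to \cite{ErdosSimonovits, ErdosStone} without giving a proof, so there is no ``paper's own proof'' to compare against; what follows is a review of your outline on its own terms.

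Your decomposition is the standard and correct one: the Tur\'an graph $T_{\chi(H)-1}(n)$ gives the lower bound because it is $(\chi(H)-1)$-colorable hence $H$-free, and the upper bound reduces to Erd\H{o}s--Stone for $K_{\chi(H)}(|V(H)|)$ since any proper coloring of $H$ embeds it there. The induction on $r$ for Erd\H{o}s--Stone, with K\H{o}v\'ari--S\'os--Tur\'an as the base case and a minimum-degree cleanup before applying the inductive hypothesis, is also the usual route. One calibration in your sketch is off, though: you cannot generally guarantee that many outside vertices have $\ge s/2$ neighbors in \emph{each} of the $r-1$ parts of the $K_{r-1}(s)$. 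From $\delta(G)\ge(1-\tfrac{1}{r-1}+\tfrac{\eps}{2})n$ one gets, by counting edges between $V(K_{r-1}(s))$ and the outside, that the \emph{average} outside vertex has at most roughly $s\bigl(1-\tfrac{(r-1)\eps}{2}\bigr)$ non-neighbors in the copy; those non-neighbors could all sit in one part, in which case that vertex has only about $\tfrac{(r-1)\eps}{2}s$ neighbors there, which is well below $s/2$ when $\eps$ is small. The fix is to lower the threshold: by Markov a positive fraction of outside vertices have at least $c(\eps,r)\,s$ neighbors in every part for some $c(\eps,r)>0$, and choosing $s$ large enough that $c(\eps,r)\,s\ge t$ lets you pigeonhole on the $\binom{s}{t}^{r-1}$ possible $(T_1,\dots,T_{r-1})$ of $t$-subsets to extract $t$ outside vertices with a common $t$-subset of neighbors in each part. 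With that adjustment your quantifier ordering ($\eps$, then $t$, then $s$, then $n_0$) is exactly right and the proof goes through.
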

For $r\ge 3$, this question is wide open. Erd\H{o}s \cite{Erdos1964} showed  that $\pi(H)=0$ if and only if $H$ is an $r$-partite $r$-graph. Other than this, the answer is known for only very few $r$-graphs or classes of $r$-graphs. 

Restricting ourselves to $3$-graphs from now on, finding the Tur\'an density for $K_4=K_4^3$, the complete $3$-graph on $4$ vertices, may be the most famous open problem in extremal hypergraph theory. Even if we delete one edge from $K_4$ to create $K_4^-$, the smallest not $3$-partite $3$-graph, $\pi(K_4^-)$ is unknown. The conjectured values for these two densities are $\pi(K_4)=\frac59$ \cite{Turan1941} and $\pi(K_4^-)=\frac27$ \cite{Mubayi2003}. 

Three small $3$-graphs we know the Tur\'an density for are the Fano plane $F$ on $7$ vertices, and the {\em books} $B_{3,2}$ and $B_{3,3}$ on $5$ vertices and edge sets $\{123,124,345\}$ and $\{123,124,125,345\}$, respectively. We have $\pi(F)=\frac34$ \cite{Furedi2000}, $\pi(B_{3,2})=\frac29$ \cite{Frankl1983}, and $\pi(B_{3,3})=\frac49$ \cite{Furedi2003}. For each of these three $3$-graphs, the structure of the extremal construction is rather simple. The vertices are partitioned into two or three parts of appropriate sizes, and edges are completely determined by the parts the vertices belong to. 

For known results and fundamental techniques, a survey by Keevash \cite{Keevash2011} is a good resource, for open problems see also a collection by Mubayi, Pikhurko and Sudakov \cite{mubayi2011}. 
For recent updates see Balogh, Clemen and Lidick\'y~\cite{FAsurvey}.

Since these surveys, two recent results are exciting developments.
For this, let us consider the tight $\ell$-cycle $C_\ell$, a $3$-graph on vertex set $[\ell]$, and the edges encoded in the string $123\ldots\ell12$, where the edges are exactly all $3$-sets which appear consecutively in the string. Similarly, we can define the tight $\ell$-cycle minus an edge $C_\ell^-$ by the string  $123\ldots\ell1$. Observe that $K_4=C_4$ and $K_4^-=C_4^-$, so these two classes contain the two most notorious $3$-graphs for which the Tur\'an density is unknown. If $\ell$ is a multiple of $3$, then $C_\ell$ and $C_\ell^-$ are $3$-partite, so $\pi(C_\ell)=\pi(C_\ell^-)=0$. For other large enough $\ell$, we have the following two Theorems by Kam\v{c}ev, Letzter, and Pokrovskiy \cite{kamcev2023turan}, and by Balogh and Luo \cite{balogh2023turan}.

\begin{theorem}[\cite{kamcev2023turan}]
     There is a constant $L$ such that $\pi\left(C_\ell\right)=2\sqrt{3}-3$, for every $\ell\ge L$ not divisible by $3$.
\end{theorem}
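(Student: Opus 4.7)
The plan is to prove matching upper and lower bounds of $2\sqrt{3}-3$.

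For the lower bound, I would use an iterated blow-up construction. Fix $\alpha \in (0,1)$ to be optimized and recursively define $H_n$ as follows: partition $V(H_n)$ into $A \cup B$ with $|A| = \lfloor \alpha n \rfloor$ and $|B| = n-|A|$, include every triple with exactly one vertex in $A$ and two in $B$, and recurse on $A$. Assign each vertex $v$ its \emph{level} $L(v)$, the depth at which $v$ enters the $B$-side of the recursion, so that every edge contains exactly two vertices at some common minimum level $m$ and one vertex at level strictly greater than $m$. Given a tight walk $v_0, v_1, \ldots$ in $H_n$, label each edge $\{v_i, v_{i+1}, v_{i+2}\}$ by the index $h_i \in \{0,1,2\}$ of its unique ``high'' vertex. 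A short case analysis using the fact that consecutive triples share two vertices shows that the minimum level is invariant along the walk and $h_{i+1} \equiv h_i - 1 \pmod{3}$. Closing a tight cycle of length $\ell$ then forces $\ell \equiv 0 \pmod 3$, so $H_n$ is $C_\ell$-free whenever $3 \nmid \ell$. Computing the recursive density as $d(\alpha) = \frac{3\alpha(1-\alpha)^2}{1-\alpha^3}$ and optimizing, via $2t^2 - 6t + 3 = 0$ with $t = 1-\alpha$, yields $d = 2\sqrt{3}-3$.

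For the upper bound, I would apply $3$-uniform hypergraph regularity (R\"odl--Skokan or Gowers) to any $C_\ell$-free host $H$ of density at least $2\sqrt{3}-3 + \eps$, obtaining a reduced $3$-graph $R$ on constantly many clusters of essentially the same density in which each edge is a dense quasirandom triad. By the standard embedding and counting lemmas, a tight cycle in $R$ lifts to a tight cycle of any prescribed sufficiently large length in $H$. The problem thus reduces to showing that any reduced $3$-graph $R$ of density strictly exceeding $2\sqrt{3}-3$ contains a closed tight walk of every sufficiently large length in every nonzero residue class modulo $3$. A tight-path absorbing argument should produce, for each ordered pair of vertices in $R$, a reservoir of many tight paths of carefully controlled lengths, and these can then be concatenated to close cycles of any target residue.

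The hard part will be the extremal and structural step on the reduced graph: classify the $3$-graphs in which every tight cycle has length divisible by $3$, and prove that their edge density is at most $2\sqrt{3}-3$. I would expect the extremal examples to be essentially the iterated blow-ups from the construction, together with a matching stability theorem saying that any near-extremal $R$ is close to such a blow-up. Pushing this stability back through the counting lemma, while carefully handling the regularity and absorbing error terms, is what should give the tight upper bound on $\pi(C_\ell)$; quantifying the constant $L$ is then a matter of bookkeeping the dependencies.
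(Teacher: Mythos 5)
This theorem is not proved in the present paper; it is only quoted from Kam\v{c}ev, Letzter, and Pokrovskiy, so there is no in-paper argument to compare your attempt against. Judged on its own terms: your lower-bound construction is correct and is the right extremal family. Recursing on the one-vertex side gives density $\frac{3\alpha(1-\alpha)^2}{1-\alpha^3}$, the optimization $2t^2-6t+3=0$ with $t=1-\alpha$ does produce $2\sqrt{3}-3$, and the level-invariance argument with the position shift $h_{i+1}\equiv h_i-1\pmod 3$ cleanly forces $3\mid\ell$ for any embedded tight cycle, so the construction is $C_\ell$-free for $3\nmid\ell$.

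The upper bound, however, is a roadmap rather than a proof. After hypergraph regularity and the counting/embedding machinery (which you describe correctly in outline), the entire difficulty of the Kam\v{c}ev--Letzter--Pokrovskiy argument is concentrated in the extremal and stability analysis of the reduced hypergraph: showing that density above $2\sqrt{3}-3$ forces closed tight walks of all large lengths in both nonzero residue classes mod $3$, by examining tight components and their interaction. You defer exactly this as ``classify the $3$-graphs in which every tight cycle has length divisible by $3$'' plus ``a matching stability theorem,'' but that is the theorem, not a supporting lemma; no mechanism is offered for proving it. So the proposal contains a correct lower bound and an accurate high-level plan for the upper bound, but has a genuine gap at the one step that carries all the weight.
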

\begin{theorem}[\cite{balogh2023turan}]\label{thmjozsi}
     There is a constant $L$ such that $\pi\left(C_\ell^-\right)=\frac14$, for every $\ell\ge L$ not divisible by $3$.
\end{theorem}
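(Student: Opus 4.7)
The theorem has two halves: a lower bound $\pi(C_\ell^-)\ge\tfrac14$ via an explicit $C_\ell^-$-free construction, and the matching upper bound $\pi(C_\ell^-)\le\tfrac14$ via a hypergraph-regularity argument that uses the hypothesis $3\nmid\ell$ to close long tight paths into $C_\ell^-$. I would prove each direction separately.

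\textbf{Construction for the lower bound.} Use the iterated balanced $3$-partite $3$-graph $H^*_n$: partition $V$ into parts $A_1,A_2,A_3$ of size $n/3$ each, include every triple with one vertex in each $A_i$, and recurse inside each $A_i$. The top level contributes $(n/3)^3=n^3/27$ edges, and summing across levels (geometric series with ratio $1/9$) gives $|E(H^*_n)| = \tfrac{n^3}{27}\cdot\tfrac{1}{1-1/9} = \tfrac{n^3}{24}$, i.e.\ density $\tfrac14$. For $C_\ell^-$-freeness: any two edges sharing two vertices must sit at the same recursion level (the level being determined by the coarsest ancestor cluster at which the three vertices split), and by the edge-connectivity of $C_\ell^-$ all edges of an embedded copy lie at a single level. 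These edges then form a $3$-partite sub-hypergraph, but $C_\ell^-$ admits a proper $3$-coloring only when $3\mid\ell$ (the position-mod-$3$ coloring fails on the closing edge $\{v_{\ell-1},v_\ell,v_1\}$ otherwise), a contradiction.

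\textbf{Upper bound via regularity.} Suppose $H$ is $C_\ell^-$-free with $|E(H)|\ge(\tfrac14+\eps)\binom{n}{3}$ on $n\gg L$ vertices. Apply the strong hypergraph regularity lemma to obtain a reduced $3$-graph $R$ on $t$ clusters inheriting density $\tfrac14+\Omega(\eps)$. Supersaturation combined with the counting lemma produce $\Omega(n^{\ell})$ labelled tight paths $v_1v_2\cdots v_\ell$ in $H$ whose consecutive triples lie in dense regular cluster triples. For each such path, either the closing edge $\{v_{\ell-1},v_\ell,v_1\}$ lies in $H$ (completing a $C_\ell^-$ and contradicting the assumption), or it does not. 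If no closing edge is ever present, then the density of $H$ --- after decomposition into the relevant cluster types --- must come essentially from a $3$-partite skeleton of $R$. But any $3$-partite $3$-graph has density at most $\tfrac{6}{27}=\tfrac{2}{9}<\tfrac14$, contradicting the assumed density of $R$. Hence some tight path admits a closing edge and $C_\ell^-\subset H$.

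\textbf{Main obstacle.} The principal difficulty is the final closing step: rigorously showing that the only dense obstruction to closing tight paths into $C_\ell^-$ is a $3$-partite skeleton in the reduced graph, and turning this into a clean density bound that exploits the jump from $\tfrac29$ to $\tfrac14$. A stability-type analysis of tight-path counts in $R$ is needed, together with the observation that non-$3$-partiteness of $C_\ell^-$ for $3\nmid\ell$ hinges on a single failing mod-$3$ coloring at the closing edge. The quantitative error terms from the regularity and counting lemmas force $\ell$ to exceed a uniform constant $L$ (so the cluster-level structure is rich enough to support both path-finding and path-closing), which is precisely the assumption $\ell\ge L$ in the statement.
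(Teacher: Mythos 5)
The statement you are asked to prove is cited from Balogh and Luo (reference \texttt{balogh2023turan}); the present paper does not prove it directly. What the paper \emph{does} prove (Theorem~\ref{Turangen}) strengthens it to $L=5$, but via a completely different route: it first establishes $\pi(C_5^-)=\tfrac14$ by flag-algebra computations (Theorem~\ref{Turan}, via Lemma~\ref{thmmain2}), and then lifts to general $\ell$ by noting that $C_\ell^-$ sits inside a blow-up of $C_5^-$ (Proposition~\ref{clTurgen}), so a $C_\ell^-$-free graph has $o(1)$ density of $C_5^-$, which the Hypergraph Removal Lemma converts to the same bound. Your regularity-based sketch is therefore a genuinely different approach from the paper's, and closer in spirit to what Balogh--Luo actually do.

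Your lower-bound half is fine: the iterated balanced $3$-partite construction, the observation that all edges of an embedded $C_\ell^-$ must live at a single recursion level because $C_\ell^-$ is tightly connected, and the $3$-colorability obstruction when $3\nmid\ell$ are all correct (the paper makes the same observations in the Introduction when discussing $H_n$).

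The upper-bound half has a genuine gap, and it is exactly the hard part of the theorem. You argue that if no tight $\ell$-path can be closed, then ``the density of $H$ must come essentially from a $3$-partite skeleton of $R$, but any $3$-partite $3$-graph has density at most $\tfrac{2}{9}<\tfrac14$, contradiction.'' This confuses a \emph{single-level} $3$-partite $3$-graph (density $\tfrac29$) with the actual extremal example, which is the \emph{iterated} $3$-partite construction of density $\tfrac14$. The iterated construction is $C_\ell^-$-free, has no closable tight $\ell$-path, and has density $\tfrac14$ --- so the conclusion that ``no closable path $\Rightarrow$ density $\le\tfrac29$'' is simply false. Equivalently: the reduced graph $R$ can itself be $C_\ell^-$-free with density $\approx\tfrac14$, so inheriting density $\tfrac14+\Omega(\eps)$ to $R$ does not force $R$ to contain a closable tight walk, and supersaturation does not produce the tight paths you need without further structural input. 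The real work in the Balogh--Luo argument is precisely to control this nested, self-similar obstruction (e.g.\ by analyzing tight components and running a careful iteration), and nothing in your sketch addresses it. Without that, the jump from ``paths do not close'' to a density bound strictly below $\tfrac14$ does not go through.
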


Both the previous statements are conjectured to be true already for $L=5$ (they are false for $\ell=4$), but the proofs only work for very large $L$.
In this paper, we add to the very meager set of known results for small $3$-graphs, and first determine the Tur\'an density of $C_5^-$. 
The limit object is the same as in Theorem~\ref{thmjozsi}, and  has a much more intricate structure, described later, than the limit objects for the results on other small $3$-graphs mentioned above.

\begin{theorem}\label{Turan}
    \[
    \pi\left(C_5^-\right)=\frac{1}{4}.
    \]
\end{theorem}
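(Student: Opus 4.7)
The plan is to prove Theorem~\ref{Turan} by establishing matching upper and lower bounds on $\pi(C_5^-)$. The lower bound $\pi(C_5^-)\ge \tfrac14$ needs a construction, while the upper bound $\pi(C_5^-)\le \tfrac14$ calls for a flag algebra argument, which, given the \flags\ machinery and TikZ setup in the preamble, is almost certainly computer assisted.

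For the lower bound, I would exhibit a sequence of $C_5^-$-free $3$-graphs on $n$ vertices with edge density approaching $\tfrac14$. The paper says explicitly that the limit object is the same as in Theorem~\ref{thmjozsi}, so a first attempt would be to take the Balogh--Luo construction (for large $\ell$ not divisible by $3$) and show it avoids $C_5^-$ specifically. That construction is built around an iterated blow-up of some small base pattern, so verifying $C_5^-$-freeness reduces to a short finite case analysis: any putative embedding of the four edges of $C_5^-$ must either stay inside one block (contradicting a $C_5^-$-free base) or cross the blocks in a pattern that the iterated coloring forbids, with the obstruction ultimately coming from $5\not\equiv 0\pmod 3$.

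For the upper bound, I would apply Razborov's flag algebra method. Enumerate all $3$-graphs on $N$ vertices for some moderate $N$ (most likely $N\in\{7,8\}$), choose a collection of types (partially labeled small $3$-graphs), and form flags extending each type. The density of $C_5^-$ in a large $3$-graph can then be written, modulo averaging identities, as a linear combination of flag densities; adding squares of real combinations of flags, which are nonnegative on any graph limit, gives an SDP whose optimum upper-bounds $\pi(C_5^-)$. I would solve this SDP numerically (CSDP/SDPA), then round to an exact rational positive semidefinite certificate, producing a verifiable inequality $\pi(C_5^-)\le \tfrac14$.

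The main obstacle will be the rounding step. Because the paper emphasizes that the extremal limit object is intricate rather than a simple partition-style construction, many distinct configurations attain density $\tfrac14$, and consequently the dual SDP certificate must vanish on each of them. This forces a high-dimensional prescribed kernel, and identifying the correct kernel -- i.e.\ which flags evaluate to zero on every extremal limit -- is typically the delicate creative step, often requiring iterative refinement with forced zero blocks before a clean rational solution emerges. Once such a certificate is in hand, verification is mechanical, and the combination with the lower bound construction yields $\pi(C_5^-)=\tfrac14$.
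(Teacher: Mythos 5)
Your overall landscape (lower bound by iterated-blow-up construction, upper bound by flag algebras) is the right first guess, but the centerpiece of your plan for the upper bound would fail, and the paper documents exactly why. You propose to run a plain flag algebra SDP on $3$-graphs on $7$ or $8$ vertices, solve it numerically, and round to a rational PSD certificate proving $\pi(C_5^-)\le\frac14$. This cannot work: the paper reports (Lemma~\ref{edgeBound}) that the plain $8$-vertex SDP yields only $\approx 0.2502175$, which is \emph{strictly} above $\tfrac14$. No amount of rounding or prescribing kernel blocks rescues an SDP whose optimum is genuinely away from the target; the obstruction is not numerical slop but the iterated nature of the extremal object $H_n$, which the plain method at any fixed flag size does not capture sharply. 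Your discussion of ``high-dimensional prescribed kernels'' misdiagnoses the difficulty: there is no exact certificate to round to in the plain setting.

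What the paper actually does is considerably more elaborate. First, it uses Proposition~\ref{noK4-} (a $C_5^-$-free graph has $o(1)$ density of $K_4^-$ and $K_4$, because $C_5^-\subseteq K_4^-[2]$) plus the Hypergraph Removal Lemma to reduce, with only $o(n^3)$ edge changes, to $\{C_5^-,K_4^-\}$-free graphs --- a far more restricted and computationally tractable class. It then proves a full structural result (Lemma~\ref{thmmain2}): the extremal $\{C_5^-,K_4^-\}$-free graph on $n$ vertices is \emph{exactly} a balanced blow-up of an edge. The key flag algebra ingredient is not an edge-density bound but an inequality (Lemma~\ref{flag}) in densities of specific supergraphs of an edge ($H_6$, $H_3^T$, $H_4^T$); an averaging argument then selects an edge whose neighborhoods define a partition $X_1\cup X_2\cup X_3\cup T$, and a long sequence of bounds (funky edges/non-edges, link-graph analysis with auxiliary colored flag models, degree arguments) pins down the partition. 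Theorem~\ref{Turan} then follows from a two-line sandwiching argument once Lemma~\ref{thmmain2} is in hand. Your lower bound sketch is essentially correct --- the $C_5^-$-freeness of the iterated blow-up is verified via tight connectivity (any tight walk must stay inside a tight block, which is $3$-partite and so cannot host $C_5^-$) --- but without the reduction to $\{C_5^-,K_4^-\}$-free graphs and the structural Main Lemma, your proposal does not yield the upper bound.
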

Using blow-up arguments from~\cite{balogh2023turan}, this implies the Tur\'an density for  $C_\ell^-$ for all $\ell\ge 5$, completing the statement in Theorem~\ref{thmjozsi}. 
\begin{theorem}\label{Turangen}
For every $\ell\ge 5$,
    \[
    \pi\left(C_{\ell}^-\right)=\begin{cases}
        0,&\text{ if }3\mid \ell,\\
        \frac{1}{4}, &\text{ if }3\nmid \ell.
    \end{cases}
    \]
\end{theorem}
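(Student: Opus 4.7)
Theorem~\ref{Turangen} naturally splits by the divisibility of $\ell$ by $3$. When $3\mid\ell$, coloring $v_i\mapsto i\bmod 3$ makes $C_\ell$, and hence its subgraph $C_\ell^-$, $3$-partite, so Erd\H{o}s's classical theorem \cite{Erdos1964} gives $\pi(C_\ell^-)=0$. Throughout the rest of the plan I assume $3\nmid\ell$. The case $\ell=5$ is exactly Theorem~\ref{Turan}, and I would treat every $\ell\ge 7$ with $3\nmid\ell$ by a blow-up argument in the spirit of~\cite{balogh2023turan}.

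For the upper bound, the plan is to show that $C_\ell^-$ embeds as a subgraph of some blow-up $(C_5^-)(t)$; blow-up invariance of the Tur\'an density, combined with Theorem~\ref{Turan}, then yields $\pi(C_\ell^-)\le \pi((C_5^-)(t))=\pi(C_5^-)=\tfrac14$. It suffices to exhibit a labeling $a_1,\dots,a_\ell\in\{1,\dots,5\}$ such that every consecutive triple $\{a_i,a_{i+1},a_{i+2}\}$ (with the last being the wraparound $\{a_{\ell-1},a_\ell,a_1\}$) is an edge of $C_5^-$; sending $v_i$ to a dedicated copy of label $a_i$ then embeds $C_\ell^-$ into $(C_5^-)(t)$ for $t$ large enough.

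I would build such labelings inductively, using the base cases $(1,2,3,4,5)$ for $\ell=5$ and $(1,4,5,3,4,2,3)$ for $\ell=7$; each is verifiable by inspection against $E(C_5^-)=\{\{1,2,3\},\{2,3,4\},\{3,4,5\},\{1,4,5\}\}$. For the inductive step I would pass to the \emph{link digraph}, whose vertices are ordered pairs $(a,b)$ and whose arcs are $(a,b)\to(b,c)$ whenever $\{a,b,c\}\in E(C_5^-)$. Each edge $\{x,y,z\}$ of $C_5^-$ contributes a directed $3$-cycle $(x,y)\to(y,z)\to(z,x)\to(x,y)$, and every pair that appears in any valid walk lies on at least one such $3$-cycle (the only excluded pair is $\{2,5\}$, which is absent from every edge and hence never occurs). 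Splicing such a $3$-cycle into the walk at any visited state produces a valid labeling for $\ell+3$ without disturbing the wraparound edge. Since every $\ell\ge 5$ with $3\nmid\ell$ is of the form $5+3k$ or $7+3k$ for some $k\ge 0$, the induction reaches every admissible $\ell$.

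For the matching lower bound $\pi(C_\ell^-)\ge \tfrac14$, the same limit object that realizes $\pi(C_5^-)=\tfrac14$ in Theorem~\ref{Turan} serves: as the introduction notes, it coincides with the iterated construction of Balogh and Luo~\cite{balogh2023turan}, and by its recursive description it is $C_\ell^-$-free for every $\ell\ge 5$ with $3\nmid\ell$, not only for large $\ell$. The main obstacle I anticipate is not the blow-up step, which is essentially a small finite check once the two base cases and the $3$-cycle splicing are in hand, but rather confirming that this single construction avoids $C_\ell^-$ uniformly in $\ell$ throughout the admissible range; this reduces to a structural analysis of tight walks in the limit object rather than any Tur\'an-type extremal calculation.
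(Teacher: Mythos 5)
Your proposal is correct and follows the same high-level route as the paper: reduce to $\ell=5$ by showing $C_\ell^-$ lives in a blow-up of $C_5^-$ (so the removal lemma / blow-up invariance of $\pi$ gives the upper bound), and use $H_n$ for the lower bound. Where you differ is in the embedding itself. The paper (Proposition~\ref{clTurgen}) first embeds $C_7^-$ in $C_5^-[2]$ via the string $13243^154^11$ and then iterates $C_{k+3}^-\subset C_k^-[2]$ via $1231^12^13^14\ldots k1$, so the target is an iterated blow-up; you instead construct a direct labeling $a_1,\dots,a_\ell\in[5]$ with all $\ell-1$ consecutive triples (including wraparound) edges of $C_5^-$, and grow it by splicing a directed $3$-cycle of the link digraph at any visited state. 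Your base labelings for $\ell=5,7$ check out, and the splice argument is sound: inserting $z,x,y$ after a visited pair $(x,y)$ adds exactly the three rotations of the single new constraint $\{x,y,z\}$, which holds since every pair occurring in a valid walk (every pair except $\{2,5\}$) extends to an edge of $C_5^-$. Your version is arguably cleaner because it lands in a single blow-up layer rather than an iterated one, though of course both suffice since a single call to Theorem~\ref{blow} and the removal lemma handles either.

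The one place you hedge unnecessarily is the lower bound. You flag ``confirming that this single construction avoids $C_\ell^-$ uniformly in $\ell$'' as the main obstacle, but this is short: $C_\ell^-$ is tightly connected for every $\ell\ge 4$, so any copy in $H_n$ must sit inside a single tight block of $H_n$, and those blocks are $3$-partite; hence $H_n$ is $C_\ell^-$-free whenever $3\nmid\ell$. This is exactly the observation the paper makes in its introduction, and it requires no analysis of the limit object beyond the recursive description of $H_n$.
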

After proving the main lemma of this paper in Section~\ref{main2}, we prove Theorems~\ref{Turan} and \ref{Turangen} in Section~\ref{limit}.

\begin{figure}[h]
\begin{center}  
\begin{tikzpicture}


\draw[hyperedgeP] 
(288:1.5) to[out=80,in=300,looseness=1.2] (72:1.5) to[out=310,in=140,looseness=1.2] (0:1.5) to[out=210,in=70,looseness=1.2] (288:1.5)
;

\draw[hyperedgeP] 
(0:1.5) to[out=100,in=0,looseness=1.2] (72:1.5) to[out=150,in=30,looseness=1.2] (144:1.5) to[out=50,in=90,looseness=1.8] (0:1.5);

\draw[hyperedgeB] 
(72:1.5) to[out=200,in=10,looseness=1.2] (144:1.5) to[out=280,in=80,looseness=1.2] (216:1.5) to[out=70,in=210,looseness=1.2] (72:1.5);

\draw[hyperedgeB] 
(144:1.5) to[out=250,in=140,looseness=1.2] (216:1.5) to[out=280,in=180,looseness=1.2] (288:1.5) to[out=200,in=230,looseness=1.8] (144:1.5);

\draw 
(0:1.5) node[vtx,label=below:2,edge_color1]{.}
(216:1.5) node[vtx,label=right:5,edge_color2]{.}
(72:1.5) node[vtx,label=below:3]{.}
(144:1.5) node[vtx,label=left:4]{.}
(288:1.5) node[vtx,label=right:1]{.}
;
\end{tikzpicture}
\hskip 3em
\begin{tikzpicture}[flag_pic]\outercycle{6}{0}
\draw (x0) node[unlabeled_vertex]{.};\draw (x1) node[unlabeled_vertex,edge_color1]{.};\draw (x2) node[unlabeled_vertex]{.};\draw (x3) node[unlabeled_vertex]{.};\draw (x4) node[unlabeled_vertex,edge_color2]{.};
\labelvertex{0}\labelvertex{1}\labelvertex{2}\labelvertex{3}\labelvertex{4}
\drawhypervertexcolor{0}{0}{1}
\drawhypervertexcolor{1}{0}{1}
\drawhypervertexcolor{2}{0}{1}
\drawhyperedge{1}{5}
\drawhypervertexcolor{1}{1}{1}
\drawhypervertexcolor{2}{1}{1}
\drawhypervertexcolor{3}{1}{1}
\drawhyperedge{2}{5}
\drawhypervertexcolor{2}{2}{2}
\drawhypervertexcolor{3}{2}{2}
\drawhypervertexcolor{4}{2}{2}
\drawhyperedge{3}{5}
\drawhypervertex{3}{3}{2}
\drawhypervertex{4}{3}{2}
\drawhypervertex{0}{3}{2}
\draw (1,-2) node{};
\end{tikzpicture} 
\hskip 3em
\begin{tikzpicture}
\draw 
(0,0) node[vtx,label=left:2,edge_color1]{.}
(1,0) node[vtx,label=right:5,edge_color2]{.}
(0,1) node[vtx,label=left:3](3){.}
(1,1) node[vtx,label=right:4](4){.}
(0.5,2) node[vtx,label=above:1](1){.}
;
\draw[edge_color1](3)--(1);
\draw[edge_color2](4)--(1);
\draw[edge_color1](3) to[bend left](4);
\draw[edge_color2](3)to[bend right](4);
\draw (1,-1) node{};
\end{tikzpicture}
\end{center}
\caption{$C_5^-$ drawn with hyperedges, as vertex-edge incidence, and as links of the two independent vertices}
\end{figure}

Furthermore, we show a stronger result, as we asymptotically determine the extremal $3$-graphs for all $C_\ell^-$ with $\ell\ge 5$ and not divisible by $3$.
Let the $3$-graph $H_n$ be the iterated balanced blow-up of an edge on $n$ vertices. In other words, for 
\[
i=\floor{\tfrac{n}3},~j=\floor{\tfrac{n+1}3},~k=\floor{\tfrac{n+2}3},
\]
inductively start with the three $3$-graphs $H_i$, $H_j$, and $H_k$, and add all edges spanning all three graphs. The start of the induction are the edge free graphs $H_1$ and $H_2$. Note that $H_n$ has edge density $\frac14+o(1)$. For any $\ell\ge 4$, $C_\ell^-$ is tightly connected, meaning that any two edges are connected by a tight walk, a sequence of at least three vertices where every three consecutive vertices span an edge. Thus, any copy of $C_\ell^-$ in $H_n$ would have to be contained in the same tight block of $H_n$. The tight blocks of $H_n$ are $3$-partite, which shows that $H_n$ contains no copy of $C_\ell^-$ unless $\ell$ is divisible by $3$.

Let $F_{\ell,n}$ be an extremal graph for $C_\ell^-$, a $C_\ell^-$-free $3$-graph on $n$ vertices with a maximum number of edges. 
We will show that, for sufficiently large $n$, $F_{\ell,n}$ is a balanced blow-up of an edge.

\begin{restatable}{theorem}{restatethmmain}\label{thmmain}
For $\ell\ge 5$ not divisible by $3$, let $F_{\ell,n}$ be a $C_\ell^-$-free $3$-graph on $n$ vertices with a maximum number of edges.
    For $n$ sufficiently large, there is a partition $V(F_{\ell,n})=X_1\cup X_2\cup X_3$ such that $v_1v_2v_3\in E(F_{\ell,n})$ for all $v_1\in X_1,v_2\in X_2, v_3\in X_3$, and $|X_1|,|X_2|,|X_3|\in\{\floor{\tfrac{n}3},\ceil{\tfrac{n}3}\}$.
\end{restatable}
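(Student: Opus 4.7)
My plan is to bootstrap the approximate structural description provided by the main lemma of Section~\ref{main2} to an exact one. I expect the main lemma to guarantee that every $C_\ell^-$-free $3$-graph with at least $\tfrac{n^3}{24}-O(n\ln n)$ edges admits a $3$-partition $V=Y_1\cup Y_2\cup Y_3$ that is close, in edit distance, to the top level of the iterated blow-up $H_n$. Since $H_n$ itself is $C_\ell^-$-free with $\tfrac{n^3}{24}-O(n)$ edges, the extremal graph $F_{\ell,n}$ satisfies the lemma's hypothesis and inherits such a partition, with $|Y_i|=n/3+o(n)$, only $O(n\ln n)$ missing cross-triples, and only $O(n\ln n)$ inner edges total.

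Next I would refine $Y_1,Y_2,Y_3$ to a \emph{locally optimal} partition by vertex-shifting. For any vertex $v\in Y_i$, define the cross-codegrees $d_{jk}(v)=|\{(x,y)\in Y_j\times Y_k : vxy\in E(F_{\ell,n})\}|$. If reassigning $v$ to another part would strictly increase the total edge count, we do so and repeat; this terminates because $F_{\ell,n}$ is finite. Extremality then implies each vertex already ``prefers'' its current part, and moreover each vertex has cross-codegrees close to the maximum $|Y_j||Y_k|$.

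The heart of the argument is showing that in this locally optimal partition, every cross-triple $v_1v_2v_3$ with $v_i\in Y_i$ is actually an edge. Suppose for contradiction that $u_1u_2u_3$ is missing. Since $F_{\ell,n}$ is extremal, $F_{\ell,n}\cup\{u_1u_2u_3\}$ contains a copy of $C_\ell^-$ using the new edge, so there is a tight walk closing back to $u_1u_2u_3$ whose other $\ell-2$ consecutive triples lie in $F_{\ell,n}$. Because $3\nmid \ell$, such a walk \emph{cannot} use only cross-partite edges (those force cyclic rotation of parts with period $3$); it must traverse at least one inner edge of $F_{\ell,n}$. By carefully counting tight walks anchored at a single inner edge and extended through the near-complete $3$-partite bulk, I would bound the number of missing cross-triples that can be ``blocked'' this way. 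If missing cross-triples outnumber this blocking capacity, the edit that deletes all inner edges and inserts the missing cross-triples produces a $C_\ell^-$-free graph with strictly more edges, contradicting extremality.

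Finally, given that every cross-triple is present, $e(F_{\ell,n})=|Y_1||Y_2||Y_3|+\sum_{i}e(F_{\ell,n}[Y_i])$, where each induced subgraph $F_{\ell,n}[Y_i]$ is itself $C_\ell^-$-free. The upper bound $\ex(C_\ell^-,m)\le \tfrac{m^3}{24}+O(m\ln m)$ from the main lemma, combined with maximization over integer partitions $|Y_1|+|Y_2|+|Y_3|=n$, forces $|Y_i|\in\{\floor{n/3},\ceil{n/3}\}$ for large $n$. The principal obstacle is the inner-edge counting in the preceding paragraph: making the ``blocking capacity'' of the $O(n\ln n)$ inner edges against the potentially $\Theta(n\ln n)$ missing cross-triples quantitatively beat the gain from the edit, uniformly across all locally optimal configurations, is the delicate technical step on which the whole argument rests.
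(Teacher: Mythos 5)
Your proposal rests on a misreading of the Main Lemma. Lemma~\ref{thmmain2} does not say that the number of edges \emph{inside} the parts is $O(n\ln n)$, and it does not produce an approximate partition with $O(n\ln n)$ funky pairs: for $G_n$ it gives an \emph{exact} partition with no funky edges, no missing cross-triples, and no exceptional vertices, while the number of edges inside the parts is $\Theta(n^3)$ — this is precisely where the recursive structure of $H_n$ lives, supplying roughly $\tfrac{n^3}{24}-\tfrac{n^3}{27}$ edges on top of the single-level complete tripartite graph. Consequently your proposed edit — delete all inner edges and insert all missing cross-triples — produces the single-level blow-up of an edge, which has strictly \emph{fewer} edges than $F_{\ell,n}$; it cannot yield the contradiction you want. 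Likewise, the ``blocking capacity'' count is problematic: a single inner (or funky) edge of $F_{\ell,n}$ extends to $\Theta(n^{\ell-3})$ tight $\ell$-walks through the near-complete tripartite bulk, so counting walks does not give a usable bound against the missing cross-triples.

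Your observation that $3\nmid\ell$ forces any embedded $C_\ell^-$ to leave the cross-partite edges is correct and does underlie the construction's validity, but the paper's actual route is different and more concrete. It first uses Theorem~\ref{thmlimit} (removal lemma, blow-up argument, almost-regular subgraph $F_n''$) to re-run the Main Lemma analysis on $F_{\ell,n}$ and obtain $f_1=o(1)$, $f_2=o(1)$, with $f_2\le f_1$ by the swap argument. It then sets up a fresh flag-algebra model for the link $L(v)$ (allowing triangles, since $K_4^-$ is no longer forbidden) to prove Claim~\ref{smallF2}: every vertex is in $o(n^2)$ missing cross-pairs. Finally it counts explicit $C_5^-$'s anchored at a hypothetical funky edge $u_Av_Av_B$: for almost all $(w_B,w_C)$, one of $u_Aw_Bw_C$, $v_Aw_Bw_C$, $v_Av_Bw_C$ must be missing, and Claim~\ref{smallF2} forces it to be the last, so some missing cross-triple sits in $\tfrac23 n+o(n)$ funky edges; a second round of the same counting with this non-edge then contradicts Claim~\ref{smallF2}, giving $F_1=F_2=\emptyset$. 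Your last step (integer optimization to balance the parts) does match Claim~\ref{balance}, but the central argument showing all cross-triples are present is missing from your proposal and, as sketched, would not go through.
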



Note that Theorem~\ref{thmmain} implies that there are no edges in $F_{\ell,n}$ of the type $u_iv_iv_j$ with $u_i,v_i\in X_i$, $v_j\in X_j$, $i\ne j$, as this 
together with appropriately chosen $\ell-3$ other vertices would induce a graph containing a copy of a $C_\ell^-$.

This implies in a strong sense that $F_{\ell,n}$ and $H_n$ converge to the same limit object, a hypergraphon. We discuss a bit of this perspective in Section~\ref{limit}, before we prove Theorem~\ref{thmmain} in Section~\ref{main}. 

We can use the theorem inductively to show that  for some large enough $M$, for every $k$ and $3^{k+1}M>n\ge 3^kM$, $F_{\ell,n}$ agrees on the first $k$ levels of $H_n$ from the outside in. This shows that $F_{\ell,n}$ and $H_n$ are isomorphic up to changing $3^k{3M\choose 3}=O(n)$ edges. In particular, since we can determine the number of edges of $H_n$ up to a small error, 
we have

\begin{restatable}{theorem}{restateexno}\label{exno}
    \[
\left|\ex\left(C_\ell^-,n\right)-\frac{n^3}{24}\right|<\frac16 n\log_3 n+O(n).
\]
\end{restatable}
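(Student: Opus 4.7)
The plan is to iterate Theorem~\ref{thmmain} until all pieces have bounded size, turning the extremal problem into a three-way recursion that matches the one satisfied by $H_n$ exactly except at the bottom. Fix $M$ large enough that Theorem~\ref{thmmain} applies to every $n \geq M$. For such $n$, the extremal graph $F_{\ell,n}$ has a partition $V = X_1 \cup X_2 \cup X_3$ with $n_i := |X_i| \in \{\lfloor n/3 \rfloor, \lceil n/3 \rceil\}$, all transversal triples as edges, and, by the remark following the theorem, every remaining edge contained inside some single $X_i$. Since the complete transversal blow-up of any $C_\ell^-$-free graphs on the parts is itself $C_\ell^-$-free (the standard $3$-coloring argument shows that, when $3\nmid\ell$, the ``wrap-around'' triple of a putative $C_\ell^-$ can be neither transversal nor internal), the extremality of $F_{\ell,n}$ forces each $F_{\ell,n}[X_i]$ to itself be extremal, giving
\[
\ex(C_\ell^-, n) \;=\; n_1 n_2 n_3 + \sum_{i=1}^{3} \ex(C_\ell^-, n_i), \qquad n \geq M,
\]
and $H_n$ satisfies the same recursion with the same $n_i$.

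Unrolling until every piece has size below $M$ takes depth $k \leq \log_3(n/M) + 1$ and produces at most $3^k = O(n)$ base-case pieces, each differing from the corresponding piece of $H_n$ by at most $\binom{M}{3} = O(1)$ edges; higher levels match identically, so $|\ex(C_\ell^-, n) - e(H_n)| = O(n)$. To estimate $e(H_n) - n^3/24$, set $g(n) := e(H_n) - n^3/24$ and $\varepsilon(n) := n_1 n_2 n_3 + \tfrac{1}{24}(n_1^3 + n_2^3 + n_3^3) - \tfrac{n^3}{24}$, so that $g(n) = g(n_1) + g(n_2) + g(n_3) + \varepsilon(n)$. A case split on $n \bmod 3$ gives $\varepsilon(3m)=0$, $\varepsilon(3m+1)=-m/4$, and $\varepsilon(3m+2)=-(m+1)/4$, hence $|\varepsilon(n)| \leq n/12 + O(1)$. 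Since the subproblem sizes at each depth still sum to $n$, the total $|\varepsilon|$-contribution of any fixed depth is at most $n/12 + O(3^i)$; summing over the $\log_3 n + O(1)$ levels yields $|g(n)| \leq \tfrac{1}{12} n \log_3 n + O(n)$, which combined with the previous step comfortably implies the stated bound.

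The main obstacle I expect is the structural step extracting the recursion: one must verify that replacing any $F_{\ell,n}[X_i]$ by an arbitrary $C_\ell^-$-free graph on $X_i$ never creates a new $C_\ell^-$ through the transversal edges, which reduces to the coloring observation that a copy of $C_\ell^-$ in the full blow-up would force a cyclic $3$-coloring of $v_1,\ldots,v_\ell$ incompatible with the wrap-around triple $\{v_{\ell-1},v_\ell,v_1\}$ whenever $3\nmid\ell$. Once the recursion and its agreement with the one for $H_n$ are established, the rest is routine bookkeeping of the two sources of error (the $O(n)$ from the base pieces and the $O(n\log_3 n)$ from the level-by-level $\varepsilon$'s).
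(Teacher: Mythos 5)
Your proposal is correct and follows the same strategy as the paper's proof: iterate Theorem~\ref{thmmain} to get the recursion $\ex(C_\ell^-,n)=n_1n_2n_3+\sum_i\ex(C_\ell^-,n_i)$ for $n\ge M$ (the $3$-coloring / tight-connectedness check you flag is exactly the paper's justification that no $C_\ell^-$ crosses parts), deduce $|\ex(C_\ell^-,n)-\lVert H_n\rVert|=O(n)$ from the $O(n)$ base pieces, and then bound $\lVert H_n\rVert-n^3/24$ via the recursion. The only difference is that the paper bounds $|\lVert H_n\rVert-n^3/24|$ by induction on $n$ with a case split on $n\bmod 3$, while you sum the per-level errors $\varepsilon(n)$ directly; your direct sum sidesteps the induction's constant-tracking and actually yields the sharper constant $\tfrac1{12}$ in place of the paper's $\tfrac16$.
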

In general, we can not expect that $F_{\ell,n}=H_n$, since modifying $H_s$ for small $s$ yields a few extra edges without creating a $C_\ell^-$. 
For example for $\ell=5$, $H_4$ is the $4$-vertex graph with $2$ edges, while $F_{5,4}$ is complete. Replacing every vertex of a single edge with a complete graph on $4$ vertices creates a $C_5^-$-free graph on $12$ vertices with $64+12=76$ edges, while $H_{12}$ contains only $64+6=70$ edges. Taking this further, we can construct a graph on $n=4\cdot 3^k$ vertices with $2\cdot 3^k=\frac{n}2$ extra edges compared to $H_n$. 
For $5\le s\le 8$, $F_{5,s}$ is the graph containing exactly all ${s-1\choose 2}$ edges containing a single vertex, verified by computer by an exhaustive search \flags. For $n\ge 9$, the best construction we know is as follows. Use the same recursion as in the construction of $H_n$, with the one difference that we use the known extremal graphs $F_{5,3}, F_{5,4},\ldots ,F_{5,8}$ instead of recursing all the way down to $H_1$ and $H_2$.
We wonder if this is in fact the unique extremal construction for all $n$.

For $\ell\ge 7$, we observe that the extremal graph $F_{\ell,s}$ is complete for every $s<\ell$, but we have not explored cases with $s\ge \ell$. The resulting construction on $(\ell-1)3^k$ vertices yields an extra $(\tfrac{\ell^2}{8}+O(\ell))n$ edges compared to $H_{(\ell-1)3^k}$, with the $O(\ell)$ in terms of growing $\ell$.
%




\section{Flag Algebra Methods}

Our method relies on the theory of flag algebras  developed by Razborov~\cite{razborov2007}.
Flag algebras can be used as a general tool to attack problems from extremal combinatorics. In this paper, we use it \CM{to bound densities of} $3$-graphs, $2$-graphs, and even $0$-graphs (i.e. graphs with no edges), often with colors added to vertices and/or edges.

\CM{The {\em plain flag algebra method}  by Razborov computationally automates many techniques from the theory of flag algebras.}
For a more thorough explanation of the method avoiding the technicalities of~\cite{razborov2007}, one may look at Section 4 of~\cite{RBtri}.
A typical application of this method provides asymptotic bounds on densities of substructures \CM{subject to constraints on the densities of other substructures}.
\CM{For example, in Lemma \ref{edgeBound}, we give an upper bound on the maximum number of edges in a $C_5^-$-free graph.}
To get \CM{accurate} bounds, true inequalities and equalities involving the densities of substructures are combined using semidefinite programming. 
\CM{Often, one can create more accurate bounds 
by considering equalities and inequalities on larger substructures. However, the size of these semi-definite programs}
quickly reach the bounds of the capabilities of even the largest computers.
Certificates for the truth of the statements \CM{proven by the plain flag algebra method} appear in the form of sums of squares. Due to their sheer size required in this paper, they are impractical to print, and checking them involves a computer. We provide our programs and certificates at
\url{http://lidicky.name/pub/c5-/}.
Lemmas and Claims using flag algebras and other computer assistance are indicated by \flags.

In some applications, the bounds from flag algebra are asymptotically sharp.
Obtaining an exact \CM{description of the extremal structure} from sharp bounds usually consists of first
bounding the densities of some small substructures by $o(1)$.
Often, these substructures can be read off from the flag algebra computation. 
Using a removal lemma applicable to the structure, one can get rid of all these substructures without changing the asymptotic densities of the extremal object.
From this, one can extract a lot of information about the structure. 
Finally, stability arguments can sometimes be used to extract the precise extremal object. While the other known exact results on Tur\'an densities of three small $3$-graphs mentioned in Section~\ref{intro} were not proven using flag algebras, the results for the two books can be re-proved with this near automatic approach. The corresponding computation for the Fano plane is too large, even with the very large computers we have access to.

For the result in this paper, bounds we get from the plain flag algebra method are not sharp. In our experience, this is typical when the extremal construction is more complicated than can be easily captured by the density of small substructures, as it is the case for most iterated constructions. We have encountered similar issues before on $2$-graphs, see \cite{C5, RBtri, c5frac}. In these previous applications 
\CM{ of the plain flag algebra method,  we were able to determine the extremal graphs using stability arguments paired with bounds on the densities of a few subgraphs} appearing with high density in the top level of the iterated extremal construction.

That same approach is not sufficient in this application. Looking merely at the top level does not give us \CM{ good enough bounds}. 
We instead also include densities of  \CM{subgraphs in } the next level.
We then extensively explore the local structure of the link of a vertex, the $2$-graph induced by the edges incident to one vertex, with the flag algebra method. The only time we stretched the limits of our available computation power is for Lemma~\ref{flag}, which took more than a month on a large cluster. All other computations are moderate in size and could be run on a personal computer in reasonable time.

\section{The Main Lemma}\label{main2}

For any two hypergraphs $H$ on $k$ vertices and $G$ on $n>k$ vertices, let 
\[
p(H,G)=\frac{|\{X\subset V(G):G[X]\simeq H\}|}{{n\choose k}}
\]
denote the induced density of $H$ in $G$. With a slight abuse of notation to improve readability, we will often just write $H$ instead of $p(H,G)$ in equations of subgraph densities where the large graph $G$ is clear from context. A first instance of this notation is the following proposition.

\begin{proposition}\label{noK4-}
    In any $C_5^-$-free $3$-graph $G$ on $n$ vertices, $K_4^-+K_4\le\frac4{n-3}$.
\end{proposition}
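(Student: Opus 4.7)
The plan is to recast the density inequality as a counting bound and exhibit injections into triples of $V(G)$. Since $\binom{n}{3}/\binom{n}{4}=4/(n-3)$, the claim is equivalent to
\[
N_{K_4^-}+N_{K_4}\le\binom{n}{3},
\]
where $N_H$ denotes the number of induced copies of $H$ in $G$. I would associate each induced $K_4$ with the four edge triples it contains and each induced $K_4^-$ with its unique non-edge triple, and argue that together these associations use each triple of $V(G)$ at most once.

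The central structural claim is about the link of a vertex outside an induced $K_4$ or $K_4^-$. For a $4$-set $S\subset V(G)$ inducing $K_4$ or $K_4^-$ and a vertex $v\notin S$, let $L_v|_S$ be the graph on $S$ whose edges are the pairs $xy\subset S$ with $xyv\in E(G)$. I would prove:
\textbf{(i)} if $G[S]\cong K_4$, then $|E(L_v|_S)|\le 1$; and
\textbf{(ii)} if $G[S]\cong K_4^-$ with apex $a$ (the unique vertex of degree $3$ in $G[S]$), then either $|E(L_v|_S)|\le 1$, or $L_v|_S$ is a subgraph of the star at $a$.

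The main obstacle is the case analysis for (i) and (ii). Both reduce to inspecting the two-edge configurations of $L_v|_S$: up to the symmetries of $G[S]$, there are two orbits in the $K_4$ case (a matching or a path of length two) and four orbits in the $K_4^-$ case (according to how many of the two link edges contain the apex $a$, and if one does and one does not, whether they share a non-apex vertex). For each "bad" orbit I would exhibit an explicit cyclic ordering $v_1v_2v_3v_4v_5$ of $S\cup\{v\}$ so that the four consecutive triples in $v_1v_2v_3v_4v_5v_1$ all lie in $E(G)$, producing a copy of $C_5^-$; cases with $|E(L_v|_S)|\ge 3$ reduce to these by picking an appropriate two-edge sub-link. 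In the one "good" $K_4^-$ case $L_v|_S\subseteq$ star at $a$, every edge of $G[S\cup\{v\}]$ passes through $a$, so $a$ would need degree $\ge 4$ in any would-be $C_5^-$, which is impossible since the maximum degree in $C_5^-$ is $3$.

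Once (i) and (ii) are established, the counting is short. If two induced $K_4$s shared three vertices $\{a,b,c\}$, the fourth vertex $v$ of one would force $L_v|_S\supseteq\{ab,ac,bc\}$, violating (i); hence any two induced $K_4$s share at most two vertices, their collections of edge triples are pairwise disjoint, and $4N_{K_4}\le|E(G)|$. Likewise, if two induced $K_4^-$s on $\{a_1,b,c,d\}$ and $\{a_2,b,c,d\}$ shared the non-edge triple $\{b,c,d\}$, then applying (ii) to the first $K_4^-$ with external vertex $v=a_2$ yields $L_{a_2}|_{\{a_1,b,c,d\}}\supseteq\{bc,bd,cd\}$, which has three edges and none through $a_1$, contradicting the star condition. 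Therefore the map $K_4^-\mapsto$ (its non-edge triple) injects into the set of non-edges of $G$, so $N_{K_4^-}\le\binom{n}{3}-|E(G)|$. Summing,
\[
N_{K_4^-}+N_{K_4}\le\binom{n}{3}-\tfrac34|E(G)|\le\binom{n}{3},
\]
and dividing by $\binom{n}{4}$ gives the stated bound $\tfrac{4}{n-3}$.
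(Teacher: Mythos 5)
Your proposal is correct, but it takes a considerably more elaborate route than the paper. The paper's entire proof is one observation: for any $3$-set $T=\{a,b,c\}$, at most one vertex $v$ can have all three pairs of $T$ in its link, since if $v_1,v_2$ were two such vertices the cyclic sequence $a\,v_1\,b\,c\,v_2\,a$ would trace a $C_5^-$ on $T\cup\{v_1,v_2\}$ (this is exactly one instance of your case (ii)). Mapping each $4$-set $S$ inducing $K_4$ or $K_4^-$ to $S\setminus\{v\}$ for a degree-$3$ vertex $v$ of $G[S]$ is then injective into $3$-sets, giving $N_{K_4}+N_{K_4^-}\le\binom{n}{3}$ in one line. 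Your lemmas (i) and (ii) prove the stronger statement that \emph{every} two-edge link pattern of an external vertex on an induced $K_4$ (two orbits) or $K_4^-$ (four orbits) other than the sub-star is forbidden, and your counting then delivers the sharper bound $N_{K_4^-}+4N_{K_4}\le\binom{n}{3}$ (equivalently $N_{K_4^-}+N_{K_4}\le\binom n3-\tfrac34|E(G)|$), which is strictly more than the proposition needs. I verified the case analysis you outlined: with $S=\{a,b,c,d\}$ inducing $K_4$, the strings $vabcdv$ (matching $\{ab,cd\}$) and $bvacdb$ (path $\{ab,ac\}$) each trace a $C_5^-$; with $G[S]\cong K_4^-$, apex $a$, non-edge $bcd$, the strings $cvbadc$ (for $\{ab,bc\}$), $vcbadv$ (for $\{ad,bc\}$), and $cvbdac$ (for $\{bc,bd\}$) each do as well, and your reduction from $\geq 3$ link edges to a bad two-edge sub-link is sound since any non-sub-star with at least two edges contains one edge avoiding $a$ plus a second edge. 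In short: your version is correct and even a touch stronger, but the paper achieves the proposition with a single clean degenerate-case observation rather than a full link classification.
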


\begin{proof}
    Note that if we duplicate a vertex of degree $3$ in a $K_4^-$ or $K_4$, we create a graph containing $C_5^-$. 
    In other words, any set of three vertices can have at most one vertex forming an edge with all three vertex pairs.
    Thus, we can have at most ${n\choose 3}=\frac{4}{n-3}{n\choose 4}$ copies of $K_4^-$ and $K_4$ in $G$.
\end{proof}


We can eliminate all copies of $K_4^-$ via a removal lemma (see Section~\ref{limit}) without asymptotically changing the extremal graph. This new simplified graph is much easier to explore with the flag algebra method, as there are drastically\footnote{On 8 vertices there are 161,023 $\{C_5^-,K_4^-\}$-free  and 1,528,500 $C_5^-$-free 3-graphs. Flag algebras on a supercomputer can run on up to $\approx 200,000$ graphs.} fewer $3$-graphs which are $\{C_5^-,K_4^-\}$-free compared to all $C_5^-$-free $3$-graphs.
For this reason, we will study $\{C_5^-,K_4^-\}$-free graphs for most of this paper before coming back to $C_5^-$-free graphs. Similarly to $F_{5,n}$, define $G_n$ to be extremal $\{C_5^-,K_4^-\}$-free graphs on $n$ vertices.


The following main lemma falls slightly short of Theorem~\ref{thmmain}, but the proof of it contains most of the work.
\begin{lemma}[Main Lemma]\label{thmmain2}
    For $n$ sufficiently large, there is a partition $V(G_n)=X_1\cup X_2\cup X_3$ such that $v_1v_2v_3\in E(G_n)$ for all $v_1\in X_1,v_2\in X_2, v_3\in X_3$, and $|X_1|,|X_2|,|X_3|\in\{\floor{\tfrac{n}3},\ceil{\tfrac{n}3}\}$.
\end{lemma}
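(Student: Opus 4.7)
My plan is to leverage the extremality of $G_n$ together with extensive flag algebra bounds on local structures, so that the $\{C_5^-, K_4^-\}$-freeness combined with maximality forces the top-level partition.

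First, I would pin down the edge density. Since $H_n$ is $C_5^-$-free (and also $K_4^-$-free, being an iterated tripartite blow-up), $|E(G_n)| \geq |E(H_n)| = \left(\tfrac{1}{4}+o(1)\right)\binom{n}{3}$. A matching upper bound can be obtained from the plain flag algebra method on $\{C_5^-, K_4^-\}$-free $3$-graphs; the absence of $K_4^-$ drastically reduces the flag space (as the footnote notes) and makes this tractable. I would then use flag algebras to push further and bound the densities of a carefully chosen family of "bad" configurations by $o(1)$, focusing on configurations that would witness a deviation from the complete tripartite top-level structure, such as having many edges of mixed type $u_iv_iv_j$.

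Second, I would study the link $L(v)$ of a typical vertex $v$, which is a $2$-graph on $n-1$ vertices. In the limit object, the link of a vertex sitting in part $X_i$ at the top level contains a huge complete bipartite block between (essentially) the other two parts $X_j$ and $X_k$, plus at most two nested tripartite structures inside the $X_j$ and $X_k$ parts. The goal is to show, via flag algebras applied at the link level (this seems to be the content of the month-long computation Lemma~\ref{flag}), that most vertex links are $o(n^2)$-close in edit distance to such a bipartite-plus-nested structure. Combined with the absence of $K_4^-$ (which prevents certain triangles in links) and the extremal density, this should imply that each vertex has a well-defined ``type'' in $\{1,2,3\}$ determined by a natural bipartition of its link.

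Third, I would turn this local data into the partition. Define $X_i$ as the set of vertices whose link-bipartition is of "type $i$"; compatibility of the bipartitions across neighboring vertices (they must agree modulo $o(n)$, otherwise one produces a copy of $C_5^-$ or $K_4^-$) gives a partition of $V(G_n)$ into three classes with $|X_i| = n/3 + o(n)$. Next, I would invoke extremality to upgrade "approximate" to "exact": any vertex $v \in X_1$ missing an edge to a pair $v_2 v_3 \in X_2 \times X_3$ could be deleted and reinserted using the link of a best-behaved vertex in $X_1$, strictly increasing the edge count unless $v_1v_2v_3$ was already an edge; this vertex-switching argument, combined with the $O(n\ln n)$ error term implicit in the density bound, should force $v_1 v_2 v_3 \in E(G_n)$ for all transversals. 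Balancedness $|X_i| \in \{\lfloor n/3\rfloor, \lceil n/3\rceil\}$ then follows from another exchange argument: moving a vertex from a larger part to a smaller part changes the count of forced tripartite edges by a positive amount.

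The main obstacle will be the second step: bridging the gap between asymptotic flag algebra inequalities on links and the exact structural statement that each link genuinely admits the right bipartition. This is the standard difficulty in extracting stability from flag algebras when the extremal construction is iterated, and the reason the authors "extensively explore the local structure of the link of a vertex" rather than reading the result off small top-level substructures alone. The nested nature of $H_n$ means that the top-level tripartition is not characterized by densities of $3$-graphs on a few vertices, so the flag algebra evidence must be gathered one level down, in the $2$-graph world of the links.
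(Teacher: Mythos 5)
Your high-level outline (density bound via flag algebras, study links, extract a partition, use extremality for exactness) points in the right direction, but there are two concrete gaps where the proposal diverges from something that actually closes, and one misidentification.

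First, the way you extract the partition is genuinely different and more fragile than what the paper does. You propose assigning each vertex a ``type'' from a bipartition of its own link and then arguing that the bipartitions are mutually compatible up to $o(n)$; this global consistency step is exactly where the difficulty hides and you do not supply a mechanism for it. The paper sidesteps this entirely: the large computation (Lemma~\ref{flag}, which is a $3$-graph flag algebra computation on $8$ vertices, \emph{not} a link-level computation) lower-bounds a weighted combination of the densities of $H_6$, $H_3^T$ and $H_4^T$. Averaging over edges then produces a \emph{single} well-chosen edge $e=\{v_1,v_2,v_3\}$, and each other vertex $u$ is placed into $X_i$ or $T$ according to which pair from $e$ forms an edge with $u$; $K_4^-$-freeness guarantees this is well-defined without any cross-vertex consistency argument. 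This one-edge anchoring is the key idea you are missing. Only after this partition is fixed and its approximate sizes are bounded (Claims~\ref{bounds} and~\ref{newbounds}) does the paper bring in link-level flag algebra, in an auxiliary colored model on $2$-graphs with the $x_i$ bounds baked in, to bound $\e{AB}$, show $T=\emptyset$, and bound the funky non-edges at each vertex.

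Second, the ``vertex-switching'' upgrade from approximate to exact is asserted but not close to complete. Deleting a low-quality vertex and duplicating a better one does appear in the paper, but only at the very end for balance (Claim~\ref{balance}). To kill all funky edges $F_1$ and funky non-edges $F_2$, the paper runs a careful double-counting argument (Claim~\ref{maxdegree}): a funky edge $v_iu_jv_j$ forces, for every $v_k$, at least one funky non-edge among $v_iv_jv_k,\,v_iu_jv_k$; combined with the flag-derived bounds on the number of funky non-edges incident to any single vertex (Claim~\ref{greenAB}) and on $|F_2|$ overall, the existence of even one funky edge creates a numerical contradiction. A bare exchange argument ``strictly increases the edge count unless $v_1v_2v_3$ was an edge'' does not account for the possibility that a duplicated vertex also gains and loses edges inside the parts, and it does not rule out $F_2$ simultaneously. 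You need quantitative control that only comes out of the link flag algebra model, which in turn needs the partition already defined in the paper's way.
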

In this case, we are not aware of any value of $n$ where $H_n$ and $G_n$ differ, and it would not surprise us if $G_n=H_n$ for all $n$. In fact, Lemma~\ref{thmmain2} together with a blow-up argument can be used to show that $G_{3^k}=H_{3^k}$.

\begin{proof}[Proof of Lemma~\ref{thmmain2}]
The proof of this lemma is rather technical, and contains a number of lemmas and claims with their own separate proofs.

We apply the plain flag algebra method to get a first upper bound. It turns out that we never have to use this bound, but we provide it here to show what a simple application of the method achieves.

\begin{lemma}[\flags]\label{edgeBound}
$G_n$ has edge density less than 
$0.2502175 + o(1)$.
\end{lemma}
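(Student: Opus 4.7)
The plan is to apply the plain flag algebra method as described in Section 2, with the twin forbidden subgraphs $C_5^-$ and $K_4^-$ as constraints throughout. First I would enumerate all $3$-graphs on $N$ vertices (for some moderate $N$, say $N=7$ or $N=8$) which are both $C_5^-$-free and $K_4^-$-free; this list is small enough (the footnote indicates $161{,}023$ on $8$ vertices) to be manageable by computer. These form the basic flags $\mathcal F_N$ whose densities in a large $\{C_5^-,K_4^-\}$-free host we wish to control.

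Next I would select a collection of types $\sigma$ (labeled partial substructures) and, for each type, all $\sigma$-flags of appropriate size so that when averaged and multiplied they produce elements of the algebra of dimension $|\mathcal F_N|$. For each type $\sigma$ one obtains an averaging inequality $\llbracket F^T\cdot Q_\sigma\cdot F\rrbracket_\sigma \succeq 0$ where $Q_\sigma$ is a positive semidefinite matrix to be chosen. Combining these, together with the identity $\sum_{H\in\mathcal F_N} p(H,G)=1$ and the linear expression $e(G)=\sum_{H\in\mathcal F_N} d(H)\, p(H,G)$, gives a semidefinite program whose optimum is an upper bound on the asymptotic edge density.

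I would then solve this SDP numerically to obtain matrices $Q_\sigma$, round the floating-point solution to rational (or algebraic) matrices that remain positive semidefinite, and verify the resulting sum-of-squares certificate exactly. If the rounded bound lies just above $\tfrac14$ (say at $0.2502175$), we are done; the target bound $0.2502175+o(1)$ is essentially whatever the SDP outputs after rounding, and the small excess over $\tfrac14$ reflects the fact (remarked upon in the paper) that plain flag algebras do not see the iterated blow-up construction sharply.

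The main obstacle is purely computational: choosing $N$ and the type/flag sizes large enough that the SDP bound is both genuinely less than, say, $0.26$ and safely above $\tfrac14$, while keeping the program within solvable size, and then rounding the SDP certificate to an exact positive-semidefinite witness without losing enough precision to push the bound over $0.2502175$. Since the subsequent proof of Lemma~\ref{thmmain2} does not actually use this numerical bound, we only need a certificate of the form described; the explicit matrices $Q_\sigma$ are deposited at the URL given in Section~2 and a reader can verify them with standard flag algebra software (hence the \flags\ tag on the statement).
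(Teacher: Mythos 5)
Your proposal is correct and describes exactly the ``plain flag algebra method'' that the paper invokes: forbid $C_5^-$ and $K_4^-$, enumerate admissible $3$-graphs on a small number of vertices, set up and solve the SDP, round to an exact PSD certificate, and deposit the certificate online. The paper's own proof is just a one-line invocation of this pipeline with the resulting numerical bound, so your account is a faithful (if more verbose) rendering of the same argument.
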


\begin{proof} 
A straight forward application of the plain flag algebra method using a large computation gives 
an asymptotic upper bound of
\[
\frac{14012179180700882058867945296422074007373856}{56000000000000000000000000000000000000000000} \approx 0.2502174853696586.
\]
\end{proof}
The graph $H_n$ has edge density $0.25+o(1)$. The bound in Lemma~\ref{edgeBound} is a bit larger than that, and it will sometimes be useful to work with a graph with edge density $0.25+o(1)$. For this, we randomly construct such a graph $\bar{G}_n$ from $G_n$ by uniformly at random deleting an appropriate number of  edges. 

Note that $\bar{G}_n$ is $\{K_4^-, C_5^-\}$-free,
since the class of $\{K_4^-, C_5^-\}$-free hypergraphs is closed under edge deletion, vertex deletion and vertex duplication. A standard symmetrization argument implies that $G_n$  is asymptotically regular.
Due to standard concentration results, $\bar{G}_n$ is also asymptotically regular with high probability.

\begin{claim}\label{deg}
    $\Delta(G_n)-\delta(G_n)<n$, and with probability $1-o(1)$, $\Delta(\bar{G}_n)-\delta(\bar{G}_n)=o(n\log n)$.
\end{claim}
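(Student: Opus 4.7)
The plan is to prove the two parts of the claim separately.

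For the first inequality, I would use the Zykov-style symmetrization suggested by the phrase ``standard symmetrization argument'' in the paragraph just before the claim. Let $u,v\in V(G_n)$ attain $\Delta(G_n)$ and $\delta(G_n)$, respectively. Because $\{K_4^-,C_5^-\}$-freeness is closed under vertex deletion and vertex duplication (as noted in the excerpt), I can form a new graph $G'$ on $n$ vertices by removing $v$ and attaching a duplicate $u'$ of $u$ whose link is $L(u')=L(u)\cap\binom{V(G_n)\setminus\{u,v\}}{2}$. This operation removes exactly the $d(v)$ edges incident to $v$ and adds exactly $d(u)-d(u,v)$ new edges at $u'$, where $d(u,v)=|\{w:\{u,v,w\}\in E(G_n)\}|$ is the codegree. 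Extremality of $G_n$ forces $d(u)-d(u,v)-d(v)\le 0$, so $\Delta(G_n)-\delta(G_n)\le d(u,v)\le n-2<n$.

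For the second inequality I would use concentration of measure. Write $m=e(G_n)$ and $\bar m=e(\bar G_n)$, so that $m-\bar m$ edges are removed uniformly at random. For a fixed vertex $v$, the number of deleted edges incident to $v$ is hypergeometric with mean $\mu_v=d_{G_n}(v)(m-\bar m)/m=O(n^2)$, hence the Chernoff bound for the hypergeometric distribution yields, for any $C>0$,
\[
\Pr\!\Bigl[\bigl|d_{\bar G_n}(v)-\mathbb{E}[d_{\bar G_n}(v)]\bigr|>Cn\sqrt{\log n}\Bigr]\le 2\exp\!\bigl(-\Omega(C^2\log n)\bigr).
\]
Choosing $C$ sufficiently large and taking a union bound over the $n$ vertices, with probability $1-o(1)$ every vertex's degree in $\bar G_n$ lies within $Cn\sqrt{\log n}$ of its expectation $d_{G_n}(v)\cdot\bar m/m$. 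Since the degrees of $G_n$ differ by less than $n$ by the first part, the expected degrees in $\bar G_n$ differ by at most $n\cdot\bar m/m<n$, and combining,
\[
\Delta(\bar G_n)-\delta(\bar G_n)\le n+2Cn\sqrt{\log n}=O\!\bigl(n\sqrt{\log n}\bigr)=o(n\log n).
\]

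The crux of the argument is the symmetrization inequality $d(u)-d(v)\le d(u,v)$, which leans entirely on the stated closure of $\{K_4^-,C_5^-\}$-freeness under duplication. The only subtlety in the concentration part is that a naive Hoeffding bound on the hypergeometric sample of size $\Theta(n^3)$ is too weak to beat $n\log n$; one must instead invoke the Chernoff-type bound that exploits the comparatively small mean $\mu_v=O(n^2)$, which comfortably delivers the required $O(n\sqrt{\log n})$ deviation.
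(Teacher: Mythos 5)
Your proof is correct and takes the same route the paper only gestures at: the first inequality is exactly the ``standard symmetrization argument'' (delete a minimum-degree vertex, duplicate a maximum-degree one, use extremality and the stated closure under duplication to get $d(u)-d(v)\le d(u,v)\le n-2$), and the second is the ``standard concentration'' step, correctly implemented with a Hoeffding/Chernoff bound for the hypergeometric using the $\Theta(n^2)$ sample size at a vertex rather than the $\Theta(n^3)$ total, which yields $O(n\sqrt{\log n})=o(n\log n)$ after a union bound. Both halves are sound and fill in the details the paper leaves implicit.
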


Let $X_1,X_2,X_3\subset V(G_n)$ be disjoint sets,
and let $T=V(G)\setminus (X_1\cup X_2\cup X_3)$.
Call a triple of vertices funky if it is an edge and contains two vertices in $X_i$ and one vertex in $X_j$ for $i \ne j$, or if it is a non-edge and contains a vertex from each of $X_1,X_2,X_3$. Let $F$ denote the set of funky triples, with $F_1$ denoting the funky edges and $F_2$ denoting the funky non-edges. 

If we can show that $F = T = \emptyset$, then we know that $G_n$ is a blow-up of an edge. We now use a flag algebra argument to show that both $F$ and $T$ have relatively small cardinality on our way to show that $G_n$ must indeed be the blow-up of an edge. 
Let ${n\choose 3} f = |F|$, ${n\choose 3} f_i = |F_i|$, $tn=|T|$, and $x_i n = |X_i|$. We will assume without loss of generality that $x_1 \ge x_2 \ge x_3$.

Recall that $H_6$ is the balanced blow-up of an edge on $6$ vertices. 
Let $H_3^T$ denote an edge with an additional isolated vertex. Finally, Let $H_4^T$ denote the balanced blow-up of an edge on $4$ vertices, plus an additional isolated vertex, see Figure~\ref{fig:H6HT3HT4}.

\begin{figure}
\begin{center}
  \begin{tikzpicture}[flag_pic]\outercycle{6}{0}
\draw (x0) node[unlabeled_vertex]{};\draw (x1) node[unlabeled_vertex]{};\draw (x2) node[unlabeled_vertex]{};\draw (x3) node[unlabeled_vertex]{};\draw (x4) node[unlabeled_vertex]{};\draw (x5) node[unlabeled_vertex]{};
\labelvertex{0}\labelvertex{1}\labelvertex{2}\labelvertex{3}\labelvertex{4}\labelvertex{5}\drawhyperedge{0}{6}
\drawhypervertex{0}{0}
\drawhypervertex{2}{0}
\drawhypervertex{4}{0}
\drawhyperedge{1}{6}
\drawhypervertex{0}{1}
\drawhypervertex{2}{1}
\drawhypervertex{5}{1}
\drawhyperedge{2}{6}
\drawhypervertex{0}{2}
\drawhypervertex{3}{2}
\drawhypervertex{4}{2}
\drawhyperedge{3}{6}
\drawhypervertex{0}{3}
\drawhypervertex{3}{3}
\drawhypervertex{5}{3}
\drawhyperedge{4}{6}
\drawhypervertex{1}{4}
\drawhypervertex{2}{4}
\drawhypervertex{4}{4}
\drawhyperedge{5}{6}
\drawhypervertex{1}{5}
\drawhypervertex{2}{5}
\drawhypervertex{5}{5}
\drawhyperedge{6}{6}
\drawhypervertex{1}{6}
\drawhypervertex{3}{6}
\drawhypervertex{4}{6}
\drawhyperedge{7}{6}
\drawhypervertex{1}{7}
\drawhypervertex{3}{7}
\drawhypervertex{5}{7}
\draw (1.5,-2.5) node{$H_6$};
\end{tikzpicture} 
\hskip 4em
\begin{tikzpicture}[flag_pic]\outercycle{4}{0}
\draw (x0) node[unlabeled_vertex]{};\draw (x1) node[unlabeled_vertex]{};\draw (x2) node[unlabeled_vertex]{};\draw (x3) node[unlabeled_vertex]{};
\labelvertex{0}\labelvertex{1}\labelvertex{2}\labelvertex{3}\drawhyperedge{0}{4}
\drawhypervertex{0}{0}
\drawhypervertex{1}{0}
\drawhypervertex{2}{0}
\draw (0.75,-1.5) node{$H_3^T$};
\end{tikzpicture} 
\hskip 4em
\begin{tikzpicture}[flag_pic]\outercycle{5}{0}
\draw (x0) node[unlabeled_vertex]{};\draw (x1) node[unlabeled_vertex]{};\draw (x2) node[unlabeled_vertex]{};\draw (x3) node[unlabeled_vertex]{};\draw (x4) node[unlabeled_vertex]{};
\labelvertex{0}\labelvertex{1}\labelvertex{2}\labelvertex{3}\labelvertex{4}\drawhyperedge{0}{5}
\drawhypervertex{0}{0}
\drawhypervertex{2}{0}
\drawhypervertex{3}{0}
\drawhyperedge{1}{5}
\drawhypervertex{1}{1}
\drawhypervertex{2}{1}
\drawhypervertex{3}{1}
\draw (1,-1.5) node{$H_4^T$};
\end{tikzpicture}
\end{center}
\caption{Vertex-edge incidences of the hypergraphs $H_6$, $H_3^T$, and $H_4^T$.}\label{fig:H6HT3HT4}
\end{figure}

\begin{lemma}[\flags]\label{flag}
For both $G_n$ and $\bar{G}_n$, we have
    \begin{equation}\label{flagEq} 6 \cdot \frac{1}{15} H_6 + 0.196 \cdot \frac{1}{4} H_3^T + 0.366 \cdot \frac{1}{10} H_4^T \ge 
    0.0552798 +o(1).\end{equation}
\end{lemma}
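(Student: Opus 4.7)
The plan is to apply the plain flag algebra method directly within the class of $\{C_5^-, K_4^-\}$-free $3$-graphs, pushed to the largest flag size the hardware will tolerate. Concretely, I would enumerate all $\{C_5^-, K_4^-\}$-free $3$-graphs on $N$ vertices --- the footnote in Section~\ref{main2} suggests that $N=8$, with its $161{,}023$ such graphs, sits near the boundary of what a cluster can handle --- and for every admissible type $\sigma$ generate the full list of $\sigma$-flags of the appropriate intermediate size. These are the combinatorial data that feed the SDP.

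Next I would set up the semidefinite program that asks for positive semidefinite matrices $Q_\sigma$, one per type, such that
\[
0.4\, H_6 \;+\; 0.049\, H_3^T \;+\; 0.0366\, H_4^T \;-\; 0.0552798 \;+\; \sum_\sigma \avg{f_\sigma^\top Q_\sigma f_\sigma} \;\ge\; 0
\]
holds as an identity in the limit, where $f_\sigma$ is the vector of $\sigma$-flag densities and $\avg{\cdot}$ is the unlabeling operator. Each product $f_\sigma^\top Q_\sigma f_\sigma$ expands into a linear combination of densities of unlabeled $N$-vertex graphs, and matching these coefficients against the target gives a linear SDP. I would solve it numerically, round the approximate solution to exact rational $Q_\sigma$ preserving positive semidefiniteness, and then verify the resulting polynomial identity in densities by an independent symbolic check. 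For $\bar{G}_n$, note that $\bar{G}_n$ is a subgraph of $G_n$ and therefore also $\{C_5^-, K_4^-\}$-free, so the very same certificate establishes the same bound.

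The main obstacle is purely computational scale. At $N=8$ the number of flag pairs per type runs into the millions and the PSD blocks can have dimension in the thousands, which matches the paper's remark that this one lemma required more than a month on a large cluster --- by far the heaviest computation in the paper. A secondary concern is the rounding step: although the target $0.0552798$ is not tight against the conjectured extremal construction (the inequality is strictly slack there, since $H_6$ appears with positive density in $H_n$), one must still rationalize the $Q_\sigma$ without destroying positive semidefiniteness and make sure the exact certified bound lands above $0.0552798$. If the raw SDP bound at $N=8$ is insufficient, the fallback would be to enrich the type list or attempt $N=9$ together with aggressive symmetry reduction, though the latter quickly becomes infeasible.
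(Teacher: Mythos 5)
There is a genuine gap: your SDP model omits the side constraints that make the inequality true. As you set it up, the certificate would have to prove \eqref{flagEq} for \emph{every} $\{C_5^-,K_4^-\}$-free $3$-graph, but the inequality is false in that generality --- for the empty $3$-graph (or any graph of edge density well below $\tfrac14$) the induced densities of $H_6$, $H_3^T$, and $H_4^T$ are all $0$, so the left side is $0<0.0552798$, and the SDP you describe is therefore infeasible no matter how large you take $N$. The paper's computation works only because the flag algebra model additionally encodes that the host graph has edge density at least $\tfrac14$ (which $G_n$ has by extremality, since $H_n$ is $\{C_5^-,K_4^-\}$-free of density $\tfrac14+o(1)$, and which $\bar G_n$ has by construction) and that it is almost regular in the sense of Claim~\ref{deg}; these linear constraints must be fed into the SDP alongside the sum-of-squares terms. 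Relatedly, your justification that the certificate transfers to $\bar G_n$ ``because it is a subgraph of $G_n$'' is not the right argument once these constraints are present: what one actually needs is that $\bar G_n$ itself satisfies the density and near-regularity hypotheses, which it does (exactly $\tfrac14+o(1)$ by construction, and near-regular with high probability).

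Two smaller points. First, your certificate identity has the sum-of-squares term with the wrong sign: to prove a lower bound $A\ge c$ one exhibits PSD matrices with $A-c-\sum_\sigma\avg*{f_\sigma^\top Q_\sigma f_\sigma}$ having nonnegative coefficients in the $N$-vertex basis; writing $A-c+\sum_\sigma\avg*{f_\sigma^\top Q_\sigma f_\sigma}\ge0$ only yields $A\ge c-\sum_\sigma\avg*{f_\sigma^\top Q_\sigma f_\sigma}$, which is weaker than the claim. Second, the enumeration, solve-round-verify pipeline on $N=8$ vertices matches what the paper actually did, so apart from the missing constraints (and the sign slip) your computational plan is the same as the paper's.
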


\begin{proof}
We express in flag algebra that $G_n$ and $\bar{G}_n$ are $\{K_4^-,C_5^-\}$-free, with edge density at least $\frac14$, and almost regular in the sense of Claim~\ref{deg}. Now we run a plain flag algebra computation on 8 vertices (this is the single very large computation used in our proof) to obtain 
\begin{align*}
\frac{6}{15} H_6 + \frac{0.196}{4} H_3^T + \frac{0.366}{10} H_4^T
&\ge
\frac{2321754763443841429024560404249395956922591}{42000000000000000000000000000000000000000000}\\
&> 0.05527987532.
\end{align*}



\end{proof}

\begin{lemma}\label{lemMain}
    There exists some partition of the vertices of $G_n$ into $X_1,X_2,X_3,T$ such that 
    \begin{equation}\label{eqMain} 6x_1x_2x_3 - f_2 + 0.196t+0.366 \cdot t(1-t) \ge 0.221119+o(1).  
    \end{equation}
\end{lemma}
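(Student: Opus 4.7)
The plan is to produce the partition via a probabilistic construction and deduce~(\ref{eqMain}) from~(\ref{flagEq}) by the probabilistic method, reading off the coefficients by direct combinatorial counting. The target constant $0.221119 \approx 4\cdot 0.0552798$ indicates the correct scaling: multiplying~(\ref{flagEq}) by $4$ gives
\[
\tfrac{8}{5}H_6 + 0.196\,H_3^T + 0.1464\,H_4^T \;\ge\; 0.221119+o(1),
\]
and I want to reproduce the LHS of~(\ref{eqMain}) as (at least) this quantity in expectation over a carefully chosen random partition of $V(G_n)$.

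Concretely, I would pick a uniformly random ordered $6$-tuple $(a_1,b_1,a_2,b_2,a_3,b_3)$ of distinct vertices of $G_n$, declare $\{a_i,b_i\}$ the $i$-th seed pair, and assign each remaining vertex $w$ to $X_i$ if the codegree pattern of $w$ with the six seeds matches that of $a_i$ (equivalently $b_i$) in an $H_6$ supported on the seeds, and to $T$ otherwise. For this random partition I would compute $\mathbb{E}[6x_1x_2x_3-f_2]$, $\mathbb{E}[t]$, and $\mathbb{E}[t(1-t)]$ in terms of induced subgraph densities: $\mathbb{E}[6x_1x_2x_3-f_2]$ should reduce to the density of $H_6$-configurations consistent with the seed rule (weighted by the $\frac{6}{15}$ factor in~(\ref{flagEq})); $\mathbb{E}[t]$ should pick up the contribution from $H_3^T$-configurations in which $w$ sits in the isolated slot; and $\mathbb{E}[t(1-t)]$ should pick up the two-vertex interactions corresponding to $H_4^T$. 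If the constants match, the probabilistic method yields some partition realizing the expectation, which by~(\ref{flagEq}) is at least $0.221119+o(1)$.

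The hardest step will be getting the combinatorial coefficients to match exactly. The identification $0.196\cdot t \leftrightarrow 0.196\cdot \tfrac{1}{4}H_3^T$ matches cleanly after the factor-of-$4$ scaling, so this term likely comes out for free from the seeding rule. The identification $0.366\cdot t(1-t) \leftrightarrow \tfrac{0.366}{10}H_4^T$ is more delicate: the factor $\tfrac{1}{10}$ must come from choosing $2$ of the $5$ vertices of $H_4^T$ to serve as a seed pair (giving $\binom{5}{2}=10$), and the $(1-t)$ factor must come from the probability that a distinguished second vertex lies in $X_1\cup X_2\cup X_3$ rather than $T$. If the match falls short, I would switch to a simpler seeding rule, for instance three $1$-vertex seeds $u_1,u_2,u_3$ with $w\in X_i$ iff $\{w,u_j,u_k\}\in E(G_n)$ uniquely for $\{i,j,k\}=\{1,2,3\}$, and redo the counts; the asymptotic near-regularity from Claim~\ref{deg} should keep any $o(1)$ corrections harmless.
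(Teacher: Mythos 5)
Your high-level strategy --- derive~(\ref{eqMain}) from~(\ref{flagEq}) by an averaging argument that locates a good seed, then read the partition and the quantities $x_i, t, f_2$ off the extensions of that seed --- is exactly the paper's. But the two specific models you propose both miss the one detail that makes the numbers work: the seed must be a \emph{uniformly random edge} of $\bar{G}_n$, not an arbitrary $6$-tuple or $3$-tuple.

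Your primary $6$-seed model has a basic problem: the assignment rule ``codegree pattern of $w$ matches that of $a_i$ in an $H_6$ supported on the seeds'' is only defined when the six seeds actually induce $H_6$, and if you condition on that event the normalization changes and the expectation no longer reads off the densities in~(\ref{flagEq}) (which are unconditional densities over all $6$-sets). Your combinatorial interpretation of the $\tfrac1{10}$ as $\binom{5}{2}^{-1}$ is not what is happening either; in the paper's computation this $\tfrac1{10}$ is a pure flag-algebra normalization that, after rescaling, collapses to the ratio $2\binom{n}{5}/\binom{n}{3} \approx \tfrac25 n^2$ (the factor $2$ being the number of edges of $H_4^T$, the binomial ratio coming from averaging $N_e(H_4^T)$ over edges $e$).

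Your backup $3$-seed model is the paper's construction verbatim, with one crucial change: the seed triple is not a uniformly random vertex triple but a triple chosen via averaging over the roughly $\tfrac14\binom{n}{3}$ \emph{edges} of $\bar{G}_n$. This restriction is exactly where your mysterious factor of $4$ comes from: averaging over a population that is a $\tfrac14$-fraction of all triples multiplies the averaged densities by $\tfrac{1}{0.25}=4$. (If you averaged over all $n^3$ ordered triples you would get $\mathbb{E}[6x_1x_2x_3-f_2] \approx \tfrac25 H_6 + \cdots$, a factor $4$ short of the $\tfrac85 H_6$ you need.) Concretely, the paper fixes an edge $e=\{v_1,v_2,v_3\}$ achieving the average of $\tfrac{6N_e(H_6)}{n^3} + \tfrac{0.196 N_e(H_3^T)}{n} + \tfrac{0.366 N_e(H_4^T)}{n^2}$, assigns $u$ to $X_i$ iff $\{u\}\cup(e\setminus\{v_i\})\in E$ and to $T$ otherwise (well-defined by $K_4^-$-freeness), and then uses the combinatorial bounds $N_e(H_6)\le |X_1||X_2||X_3| - |F_2| + O(n^2)$, $N_e(H_3^T)=|T|$, $N_e(H_4^T)\le |T|(n-|T|)$. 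Your backup rule ``$w\in X_i$ iff $\{w,u_j,u_k\}\in E$ uniquely'' is the same; you just need to also insist that $\{u_1,u_2,u_3\}\in E$ and average over those seeds, and then you would be reproducing the paper's proof.
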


\begin{proof}
    For a slightly improved bound, we use $\bar{G}_n$ to partition the vertices. Note that $F_2$ in $G_n$ is contained in the corresponding set in $\bar{G}_n$.
    For any graph $G$ and any edge $e \in E(G_n)$, let $N_e(G)$ denote the number of subgraphs in $\bar{G}_n$ isomorphic to $G$ containing $e$. Then,
    \begin{align*}
    \begin{split}
        \sum_e 6N_e(H_6) + 0.196 n^2 &N_e(H_3^T) + 0.366 n N_e(H_4^T)  \\
        &= 48 {n \choose 6} H_6+ 0.196 n^2 {n \choose 4} H_3^T + 2 \cdot 0.366n {n \choose 5} H_4^T.
    \end{split}
     \end{align*} 

    Thus, by an averaging argument and by Lemma 
    \ref{flag}, there exists some edge $e$, such that 
    \begin{align}\label{eq1} \begin{split} \frac{1}{n^3}(6N_e(H_6) &+ 0.196 n^2 N_e(H_3^T) + 0.366 \cdot n N_e(H_4^T)) \\ 
    & \ge \frac{ 48 {n \choose 6} H_6+ 0.196 n^2 {n \choose 4} H_3^T + 2 \cdot 0.366 n {n \choose 5} H_4^T}{0.25 n^3{n \choose 3}} \\
    &= \frac{6 \cdot \frac{1}{15} H_6 + 0.196 \cdot \frac{1}{4} H_3^T + 0.366 \cdot \frac{1}{10} H_4^T}{0.25} + o(1) \\
    &\ge \frac{0.0552798}{0.25} + o(1) >0.221119+o(1).
    \end{split} \end{align}

    Now fix such an edge $e$, say consisting of vertices $v_1,v_2,v_3$. Then we may create a partition as follows: 
    If for a vertex $u$, the set $\{u\}\cup(\{v_1,v_2,v_3\}\setminus \{v_i\})$ induces an edge,  place $u$ in $X_i$. 
    Otherwise, place $u$ in $T$. Notice that $u$ appears only in one of the four sets since $G_n$ (and thus $\bar{G}_n$) is $K_4^-$-free. 
    Effectively what we are doing is placing $u$ in $X_i$ if it looks like $v_i$ to the other vertices in $e$. Now, this partition determines upper bounds for all $N_e(G)$. In particular, one can see that 
    \begin{align}\label{eq2} \begin{split}
        N_e(H_6) &= (X_1 - 1)(X_2 - 1)(X_3 - 1) - f_2{n\choose 3} \\
        N_e(H_3^T) &\le tn \\
        N_e(H_4^T) &\le t(1-t)n^2
    \end{split}
    \end{align}

    The result then follows by combining \eqref{eq1} and \eqref{eq2}. 
    
\end{proof}



Now that we see how Lemma \ref{flag} translates into a polynomial based on part sizes, we want to give a brief explanation why we chose the coefficients as we did. As we want to prove that $G_n$ is very close to $H_n$, we are guided by the structure of $H_n$.
In $H_n$, most edges span the three sets of the top level blow-up. Choosing such an edge, it is in close to $\frac{n^3}{27}$ copies of $H_6$. If we can somehow guarantee that there exists an edge in $G_n$ which is in that many copies of $H_6$, we can recover these three sets via our definition of the $X_i$, with $x_1=x_2=x_3=\frac13$. But since we can not pin point such an edge, and our argument instead relies on an average over all edges, other edges in $H_n$ reduce this average. To improve our bounds, we consider subgraphs of $H_n$ which are frequent supergraphs of other edges in $H_n$.

The second most frequent type of edges in $H_n$ lies completely in one set of the top-level, but spans the three sets of the next level. Using such an edge as our root yields
$x_1 = x_2 = x_3 = 1/9$ with $t=\frac23$. These edges are contained in many graphs isomorphic to $H_3^T$ and $H_4^T$, while the first type of edges is contained in neither. That is why we choose to include these graphs to boost our bound.

Our goal is to construct an equation from these three graphs so that only the top level edges of $H_n$ can beat the value we get from flag algebra, guaranteeing that an average edge is at the top level, and allows us to partition the vertices well into three parts. At the same time, we want that bound to be as high as possible, to give us the best possible partition.
In other words, we want \eqref{eqMain} to be feasible only close to $x_1 = x_2 = x_3 = \frac13$. However, the larger our polynomial is at $x_1 = x_2 = x_3 = \frac19$, the more this increases the right hand side of \eqref{flagEq}, and with this improves our bounds. Thus, we pick our coefficients for the $p(G)$ so that \eqref{eqMain} is not feasible at $x_1 = x_2 = x_3 = 1/9$, but is very close to being feasible to get the maximum contribution from those second level edges. In practice, this does take a bit of guess work. You start with the densities you have in $H_n$, and choose your coefficients with a bit of room to account for the bounds from flag algebra not being perfect. You may have to run the plain flag algebra method multiple times to test different coefficients in order to find the best possible combination. In our case, our first try with a very large computation was close enough to being successful so that it was sufficient to run a second much smaller computation to slightly adjust the coefficients. One could further optimize the coefficients or add an additional $p(G)$ term to get more control over the resulting polynomial, or to take into account edges on the next level of the construction. But we will see that for our needs, the bounds we get work sufficiently well. 


\begin{claim}\label{bounds}
For large enough $n$, for all partitions satisfying Lemma~\ref{lemMain},
\begin{align}
    x_3 &\ge 0.306 \nonumber\\
    x_1 &\le 0.361 \nonumber\\
    t &\le 0.0109 \nonumber\\
    f_2 &\le 0.001104 \nonumber\\
    f_2 &\le \tfrac29(1-t)^3 + 0.196t + 0.366 (1-t)t-
    0.221119\label{f2poly}
    \end{align} 
\end{claim}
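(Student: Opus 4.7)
The plan is to reduce every estimate in the claim to an elementary one-variable polynomial inequality, using only \eqref{eqMain}, AM-GM, and $f_2\ge 0$.

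I would begin with \eqref{f2poly}. By AM-GM, $x_1x_2x_3\le((x_1+x_2+x_3)/3)^3=(1-t)^3/27$. Substituting into \eqref{eqMain} and solving for $f_2$ gives
\[
f_2\le \tfrac{2}{9}(1-t)^3+0.196\,t+0.366\,t(1-t)-0.221119+o(1),
\]
which is \eqref{f2poly} for $n$ sufficiently large. Writing $g(t):=\tfrac{2}{9}(1-t)^3+0.196\,t+0.366\,t(1-t)$, the condition $f_2\ge 0$ combined with \eqref{f2poly} forces $g(t)\ge 0.221119$. Elementary calculus shows $g(0)=2/9$, $g'(0)=-2/3+0.562<0$, and $g$ is strictly decreasing on $[0,1/2]$; numerically $g(0.0109)<0.221119$, which forces $t\le 0.0109$. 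Monotonicity on this interval then gives $f_2\le g(t)-0.221119\le g(0)-0.221119=2/9-0.221119<0.001104$.

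For the bounds on the part sizes, using $f_2\ge 0$ in \eqref{eqMain} yields $6x_1x_2x_3\ge A(t)$, where $A(t):=0.221119-0.196\,t-0.366\,t(1-t)$. For any fixed $x_3\le(1-t)/3$, the maximum of $x_1x_2$ under $x_1+x_2=1-t-x_3$ and $x_1\ge x_2\ge x_3$ is attained at $x_1=x_2=(1-t-x_3)/2$, so $x_1x_2x_3\le x_3(1-t-x_3)^2/4$. The necessary inequality
\[
\tfrac{3}{2}\,x_3(1-t-x_3)^2\ge A(t)
\]
fails at $x_3=0.306$ for every $t\in[0,0.0109]$, giving $x_3>0.306$. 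The symmetric computation with fixed $x_1\ge(1-t)/3$ yields $\tfrac{3}{2}\,x_1(1-t-x_1)^2\ge A(t)$, which fails at $x_1=0.361$ throughout the same interval, giving $x_1<0.361$.

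The only subtlety is verifying the two threshold inequalities uniformly in $t\in[0,0.0109]$. The worst case is $t=0$, where the infeasibility margins at $\xi\in\{0.306,0.361\}$ are small but nonzero, and a derivative comparison (the LHS decreases in $t$ faster than the RHS, since $-3\xi(1-\xi)<-0.562$ at both values of $\xi$) shows the gap only grows across the interval. The whole argument is polynomial arithmetic; no deeper obstacle is present.
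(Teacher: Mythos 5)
Your argument follows the same route as the paper: maximize $x_1x_2x_3$ at equality for fixed $t$ to get \eqref{f2poly}, then use $f_2\ge 0$ to constrain $t$, then optimize the part-sizes with $f_2$ dropped. Your treatment of the $x_3$- and $x_1$-bounds is, if anything, slightly more careful than the paper's: where the paper simply asserts that ``decreasing $t$ to increase $x_1$ increases the left side'' and reduces immediately to $t=0$, you keep $t$ general, reduce to the necessary inequality $\tfrac32\xi(1-t-\xi)^2\ge A(t)$, and verify via the derivative comparison that the infeasibility only worsens as $t$ grows on $[0,0.0109]$; that is a clean and complete justification.

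There is, however, one false intermediate claim. Writing $g(t)=\tfrac29(1-t)^3+0.196t+0.366t(1-t)$, you assert that $g$ is strictly decreasing on $[0,\tfrac12]$. It is not: $g'(t)=-\tfrac23(1-t)^2+0.562-0.732t$ vanishes near $t\approx 0.23$ and $g'(\tfrac12)=-\tfrac16+0.196>0$, so $g$ decreases only on roughly $[0,0.23]$ and then increases again (reaching a local maximum near $t\approx 0.67$). Two consequences: (i) the step ``$g$ decreasing and $g(0.0109)<0.221119$ forces $t\le 0.0109$'' is not justified by monotonicity alone; and (ii) your argument silently ignores $t\in(\tfrac12,1]$. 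Both are repairable with a direct check that the global maximum of $g$ on $[0,1]$ is at $t=0$ (the interior local maximum value $g(0.67)\approx 0.2202$ is still below $0.221119$, and $g(1)=0.196$), exactly as the paper's ``the polynomial is negative for $0.0109<t\le 1$'' phrasing implies. Once that is fixed, the remaining deduction $f_2\le g(0)-0.221119<0.001104$ (which only needs $g$ decreasing on $[0,0.0109]$, where it genuinely is) goes through.
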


\begin{proof}
 Note that for any fixed $t$, the left hand side of \eqref{eqMain} is maximized if $x_1 = x_2 = x_3=\frac{1-t}3$. Thus, $f_2$ is bounded 
 by~\eqref{f2poly}.
    This polynomial attains its maximum at $t=0$, implying that
    $f_2 \le 0.001104$ and $|F_2|<0.000184n^3$.
Further, the polynomial is negative for $0.0109<t\le 1$, implying the bound on $t$.

A little bit of calculus shows that decreasing $t$ to increase $x_1$ increases the left side
of \eqref{eqMain}, so it suffices to set $f_2 = t=0$ to bound $x_1$ and $x_3$. To bound $x_3$, we may further assume $x_1=x_2$, leading to a one-variable expression. The same is true for $x_1$ using $x_2=x_3$. Evaluating these expressions gives the claimed bounds.
\end{proof}


The previous claim shows that $G_n$ must be fairly close to the conjectured construction at the top level. Next, we want to establish improved bounds using Claim~\ref{deg}. In $\bar{G}_n$, every vertex has degree $\frac18 n^2+o(n^2)$.

Let us first look at the normalized degree sum inside $X_1$ (in $\bar{G}_n$). For ease of notation, we neglect "little o" terms in most of the remainder. We have
    \begin{align}\label{degsumx1}
        {\Sigma}_1 :=\frac1n\sum_{x\in X_1} \frac{d(x)}{n^2}=\frac18 x_1.
    \end{align}
We will further partition $\Sigma_1=\Sigma_I+\Sigma_R+\Sigma_F+\Sigma_T$, where $\Sigma_I$ is the contribution from edges completely in $X_1$, $\Sigma_R$ is the contribution from edges spanning all of $X_1$, $X_2$ and $X_3$, $\Sigma_F$ is the contribution from edges in $F_2$, and $\Sigma_T$ is the contribution from edges with at least one vertex in $T$.   
    
    We have $\Sigma_I\le\frac18 x_1^3$ --- note that the random process from $G_n$ to $\bar{G_n}$ also affected the degrees inside $X_1$. We have exactly $\Sigma_R=x_1x_2x_3-\frac16f_2$.

    Next, let us look at all edges of the types $u_1v_1v_2$, $u_1v_1v_3$, $v_1u_2v_2$, $v_1u_3v_3$ to bound $\Sigma_F$. The first two types each contribute $2$ to $\Sigma_F$, the last two each contribute $1$. To account for these edges, we will look at all subgraphs spanned by $6$ vertices $\{u_1,v_1,u_2,v_2,u_3,v_3\}$. Note that if $u_1v_1v_2\in E$, then at least one of $u_1v_2v_3$ and $v_1v_2v_3$ is in $F_2$ to avoid a $K_4^-$. 
\begin{claim}[\flags]
    For all $\{K_4^-,C_5^-\}$-free $6$-vertex $3$-graphs on vertices $\{u_1,v_1,u_2,v_2,u_3,v_3\}$ the ratio of the count of edges $\sum_{t \in \{u,v\}}2u_1v_1t_2 + 2u_1v_1t_3+ t_1v_2u_2 + t_1v_3u_3$ 
    over the number of non-edges intersecting all three pairs $u_1v_1$, $u_2v_2$, $u_3v_3$.
\end{claim}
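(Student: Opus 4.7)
The plan is to establish the ratio bound by exhaustive analysis of $\{K_4^-, C_5^-\}$-free $3$-graphs on the six labeled vertices $\{u_1,v_1,u_2,v_2,u_3,v_3\}$. Since there are only $\binom{6}{3}=20$ possible triples, the whole configuration space has size at most $2^{20}$, and the forbidden-subgraph conditions will prune this drastically. For each admissible configuration, one computes the weighted edge count in the numerator and the count of funky non-edges $\{x_1,x_2,x_3\}$ with $x_i\in\{u_i,v_i\}$ in the denominator, and verifies the claimed ratio. The \flags\ marker signals that this finite check is delegated to the computer, but the underlying computation is modest (six labeled vertices, a small plain flag algebra instance or a direct enumeration), in contrast to the heavy run for Lemma~\ref{flag}.

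Structurally, one expects a bounded ratio for the following reason. Every funky edge forces companion funky non-edges via $K_4^-$-freeness: if, say, $u_1v_1u_2$ is an edge, then among the four triples on $\{u_1,v_1,u_2,v_2\}$ at most two can be edges, otherwise a $K_4^-$ appears; consequently at least one of $u_1u_2v_2,v_1u_2v_2$ must be a non-edge, and extending this non-edge to an $x_3\in\{u_3,v_3\}$ typically yields a non-edge of the required "intersects all three pairs" type (again to avoid a $K_4^-$ with surrounding edges). Similar local obstructions apply to the other funky-edge types that contribute to the numerator. Aggregating these forced non-edges across the three pairs gives a linear relationship between funky-edge counts and the funky non-edge count.

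To organize the case analysis in a human-readable way, I would first split on how many of the four "diagonal" triples (those containing both $u_i$ and $v_i$ for some $i$) are edges; each such edge sharply restricts what else can appear by $K_4^-$-freeness. Then I would handle the "transversal" triples (one vertex from each pair) case by case, using $C_5^-$-freeness to rule out configurations where two transversal edges and a diagonal edge coexist (these tend to extend to a $C_5^-$ by appropriate relabeling of the five-vertex cycle). This reduces the analysis to a handful of symmetry classes in which the ratio can be bounded directly.

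The main obstacle is purely combinatorial bookkeeping: there are enough small cases that a clean closed-form argument is awkward, which is presumably why the authors state the claim as a flag-algebra-verified finite check. Having seen this pattern, I would lean on the computer to handle enumeration and only extract from it the equality cases (the extremal configurations) to confirm no symmetry was overlooked, so that the derived inequality feeds correctly into the bound on $\Sigma_F$ in the surrounding argument.
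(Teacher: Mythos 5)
Your proposal is correct and takes essentially the same route as the paper: the paper's own proof is a one-line computer enumeration of all $\{K_4^-,C_5^-\}$-free labeled $3$-graphs on the six vertices, reporting that the maximum of the ratio is $\frac54$ (with the extremal configuration shown in Figure~2). Your "direct enumeration" plan is exactly this. One small caution about your intuitive warm-up: the non-edge you extract from a diagonal funky edge via $K_4^-$-freeness (e.g.\ one of $u_1u_2v_2$, $v_1u_2v_2$) meets only two of the three pairs, so it is \emph{not} itself a transversal non-edge of the kind counted in the denominator; some further argument is needed to propagate to a transversal, which is why the clean linear relationship you gesture at does not fall out by hand. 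Since both you and the paper delegate the actual bound to the finite check, this does not affect the correctness of the proposal.
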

\begin{proof}
    We generate all possible such $\{K_4^-,C_5^-\}$-free $6$-vertex $3$-graphs with a computer and find the ratio. Over all such $3$-graphs, the largest ratio is $\frac54$. Figure~\ref{fig:54} depicts the graph achieving the ratio $\frac54$.
\end{proof}

\tikzset{text_color0/.style={color=black}} 
\tikzset{text_color1/.style={color=red}} 
\tikzset{text_color2/.style={color=blue}} 
\tikzset{text_color3/.style={color=green!70!black}} 
\tikzset{text_color4/.style={color=orange}} 
\tikzset{text_color5/.style={color=gray}} 

\def\drawhypervertexBIcolor#1#2#3#4{ \pgfmathtruncatemacro{\plusone}{#1+1} \pgfmathtruncatemacro{\plusone}{#1+1} 
\draw[edge_color#3] (x#1)++(0,-0.2-0.2*#2)+(-0.2,0) -- +(0.2,0);
\draw[text_color#4] (x#1)++(0,-0.2-0.2*#2)  node[fill=white,outer sep=0,inner sep=0]{\tiny \plusone};
} 

\newcommand{\drawHECthree}[9]{
\drawhyperedge{#2}{#1}
\drawhypervertexBIcolor{#4}{#2}{#3}{#5}
\drawhypervertexBIcolor{#6}{#2}{#3}{#7}
\drawhypervertexBIcolor{#8}{#2}{#3}{#9}
}

\begin{figure}
\begin{center}
 \vc{  
  \begin{tikzpicture}[flag_pic]\outercycle{6}{0}
\draw (x0) node[vertex_color1]{};\draw (x1) node[vertex_color1]{};\draw (x2) node[vertex_color2]{};\draw (x3) node[vertex_color2]{};\draw (x4) node[vertex_color3]{};\draw (x5) node[vertex_color3]{};
\labelvertex{0}\labelvertex{1}\labelvertex{2}\labelvertex{3}\labelvertex{4}\labelvertex{5}
\drawHECthree{6}{0}{1}{0}{1}{1}{1}{4}{3}
\drawHECthree{6}{1}{1}{0}{1}{1}{1}{5}{3}
\drawHECthree{6}{2}{1}{0}{1}{2}{2}{3}{2}
\drawHECthree{6}{3}{2}{1}{1}{2}{2}{4}{3}
\drawHECthree{6}{4}{2}{1}{1}{2}{2}{5}{3}
\drawHECthree{6}{5}{2}{1}{1}{3}{2}{4}{3}
\drawHECthree{6}{6}{2}{1}{1}{3}{2}{5}{3}

\drawHECthree{6}{7}{3}{0}{1}{2}{2}{4}{3}
\drawHECthree{6}{8}{3}{0}{1}{2}{2}{5}{3}
\drawHECthree{6}{9}{3}{0}{1}{3}{2}{4}{3}
\drawHECthree{6}{10}{3}{0}{1}{3}{2}{5}{3}
\end{tikzpicture} } 

\end{center}
\caption{3-graphs on 6 vertices having 4 normal edges (blue), 3 extra funky edges (red) where the first two are with coefficient two, and 4 funky non-edges (green).}\label{fig:54}
\end{figure}

Our goal is to get a bound 
on $\Sigma_F$ in terms of non-edges in $F_2$. For this, let us set up an auxiliary bipartite weighted multigraph $M$ between the vertices $v_1\in X_1$ and non-edges $w_1w_2w_3\in F_2$. For every set $S=\{u_1,v_1,u_2,v_2,u_3,v_3\}$, we count the $s_1$ edges in $F_1\cap S$ containing $v_1$, and the $s_2$ non-edges $w_1w_2w_3\in F_2\cap S$. Then, we add an edge of weight $\frac{s_1}{s_2}$ in $M$ between $v_1$ and $w_1w_2w_3$. Notice that this is a multigraph since different $S$ contain $\{v_1,w_1,w_2,w_3\}$.

For every set $S$ containing $v_1$, the total edge weight of $v_1$ in $M$ is $s_1$.
So, counting over all sets $S$ containing $v_1$, the total weight of the edges in $M$ is at least $\frac12x_2x_3^2n^3\Sigma_F$, 
as each $F_1$-edge is in at least $\frac12x_2x_3^2n^3$ different sets $S$. On the other hand, every non-edge $w_1w_2w_3\in F_2$ is in $x_1x_2x_3n^3$ sets $S$, so it has edge weight in $M$ at most $\frac54 x_1x_2x_3n^3$ by the discussion before about $6$-vertex graphs, so the total edge weight is bounded above by $\frac54 x_1x_2x_3n^3f_2{n\choose 3}$. After normalizing the degree sum as before,
    
    \[
    \Sigma_F\le\frac1{n^3}\frac52f_2{n\choose 3}\frac{x_1x_2x_3}{x_2x_3^2}=\frac{5x_1}{12x_3}f_2.
    \]

    Finally, let us account for edges with vertices in $T$. 
Every vertex in $T$ has degree $\frac18n^2$, so there are at most $\frac18 tn^3$ edges containing a vertex in $T$ and a vertex in $X_1$. Edges with one vertex in $T$ and two vertices in $X_1$ count twice 
towards $\Sigma_T$.
    Since the link of every vertex is triangle free, there are at most $\frac14 tx_1^2n^3$ such edges.

    So, in total, 
    $\Sigma_T\le \frac18t(1+2x_1^2)$. 
%
    Taking everything together, we get
    \begin{align}\label{sideeq}
        \frac18 x_1=\Sigma_1\le \frac18 x_1^3+x_1x_2x_3-\frac16f_2+\frac{5x_1}{12x_3}f_2+\frac18t(1+2x_1^2),
    \end{align}
and consequently after multiplying by $24x_3$,
\begin{align}\label{sideeq2}
         3x_1x_3\le  3x_1^3x_3+24x_1x_2x_3^2-4x_3f_2+10x_1f_2+3x_3t+6x_1^2x_3 t.
    \end{align}
This inequality is weakest for maximized $f_2$, so we may assume that~\eqref{eqMain} holds with equality, and we have 
\begin{align}\label{sideeq3}
    0\le& 
-2.21119 x_1 + 1.96  x_1 t + 0.884476 x_3 + 2.216  x_3 t - 3 x_1 x_3 + 
 6  x_1^2 x_3t + 3 x_1^3 x_3\nonumber\\
 &+ 60 x_1^2 x_2 x_3 + 
 3.66 x_1  t - 3.66 x_1 t^2 - 
 1.464 x_3  t + 1.464 x_3 t^2 .
    %
\end{align}


 Combining~\eqref{eqMain} and \eqref{sideeq3}, we find new bounds on the $x_i$ 
 via optimization.
 This time, the optimization is less straight forward. This is a polynomial optimization problem which could certainly be treated with dedicated methods we are not experts in. Instead, we formulate a flag algebra model and solve that. We encode the problem as an edgeless graph with four vertex colors, where the size of the parts are $x_1,x_2,x_3,t$. 
 We can express all of our inequalities in this model, where monomials of degree $d$ correspond to graphs on $d$ vertices with suitable weights. We ask flag algebra to optimize the parameters however we like, giving us the following certified bounds with a moderately sized computation.
 \begin{claim}[\flags]\label{newbounds}
      \begin{align*}
     x_3 &\ge  0.31723\nonumber\\ 
     x_2x_3 &\ge 0.10613 \nonumber\\   
     x_1 &\le 0.33865 \nonumber\\
     x_1x_2&\le  0.11378\nonumber
     \end{align*} 
 \end{claim}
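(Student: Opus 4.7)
The plan is to view the four bounds as separate polynomial optimization problems in the nonnegative variables $x_1,x_2,x_3,t,f_2$ subject to $x_1+x_2+x_3+t=1$, the ordering $x_1\ge x_2\ge x_3$ (assumed WLOG earlier), the bounds already established in Claim~\ref{bounds}, and the two polynomial inequalities~\eqref{eqMain} and~\eqref{sideeq3}. Maximizing $x_1$ and $x_1x_2$ and minimizing $x_3$ and $x_2x_3$ over this feasible region should yield the four inequalities asserted in the claim. Note that~\eqref{sideeq3} already incorporates~\eqref{eqMain} at equality, which is the worst case since the sided constraint is weakest when $f_2$ is as large as the main inequality allows.

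Rather than attack this polynomial optimization directly with dedicated tools, the plan is to translate it into a flag algebra model and reuse the machinery already deployed in this paper. I would model the partition as an edgeless $3$-graph on $n$ vertices whose vertices carry four colors of densities $x_1,x_2,x_3,t$. Under this encoding, any monomial of total degree $d$ in $x_1,x_2,x_3,t$ is, up to a multinomial factor, the density of a single $d$-vertex colored configuration. Consequently both constraint polynomials~\eqref{eqMain} and~\eqref{sideeq3} become linear inequalities among densities of small colored configurations, and each of the four objectives is likewise a linear functional on those densities. Running the plain flag algebra method on this colored, edgeless model should then certify each bound via a sum-of-squares decomposition, output as part of the supplementary certificates.

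The hard part is not conceptual but computational: one must pick the flag size so that the sum-of-squares certificates close the numerical gap to the stated constants without exploding the semidefinite program. Since the underlying problem is a polynomial optimization on a compact semi-algebraic set, Positivstellensatz-type reasoning guarantees that sufficiently large flags yield arbitrarily tight certificates; the only real question is whether the flag size required for the stated numerics is within the available computational budget. Because the underlying graphs are edgeless with just four vertex colors, the number of flags grows slowly with size, and each of the four optimizations is a separate, modestly sized semidefinite program. This is consistent with the authors' remark that these bounds arise from a moderately sized computation rather than the month-long run needed for Lemma~\ref{flag}.
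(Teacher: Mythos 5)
Your proposal matches the paper's proof: the authors likewise combine \eqref{eqMain} and \eqref{sideeq3} with the earlier bounds, encode the polynomial optimization as an edgeless flag-algebra model with four vertex colors of densities $x_1,x_2,x_3,t$ (degree-$d$ monomials becoming densities of $d$-vertex colored configurations), and let the SDP produce certified bounds via a moderately sized computation. Your observation that $f_2$ can be eliminated by taking \eqref{eqMain} at equality is exactly the step the paper takes in deriving \eqref{sideeq3}, so the two arguments are the same.
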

%
%
%

Next, we will bound a term which will show up later in our argument.
\begin{claim}[\flags]\label{f1f2}
    In $\bar{G}_n$, $f_1-f_2+\frac34(t^2-t^3)\le
0.0003042$.


\end{claim}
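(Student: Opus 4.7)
The plan is to bound the quantity $f_1-f_2+\tfrac34(t^2-t^3)$ via another plain flag algebra computation in the same four-color model used in Claim~\ref{newbounds}, where vertices are colored by which of the sets $X_1, X_2, X_3, T$ they lie in, and the underlying $3$-graph records the edges of $\bar{G}_n$.  First I would translate each of the three summands into flag-algebra densities in this colored model.  The density $f_1$ is the sum of densities of colored $3$-vertex flags whose underlying $3$-graph is an edge and whose color pattern has two vertices in some $X_i$ and one in $X_j$ with $i\neq j$; similarly $f_2$ is the density of colored $3$-vertex non-edges with one vertex in each of $X_1,X_2,X_3$; and $t^2-t^3=t^2(1-t)$ is the density of the $3$-vertex color pattern with two vertices in $T$ and one vertex outside $T$ (ignoring the underlying $3$-graph).

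Next I would load all constraints established up to this point.  These include the structural constraints that the underlying $3$-graph of $\bar{G}_n$ is $\{K_4^-, C_5^-\}$-free, the near-regularity from Claim~\ref{deg}, the edge-density lower bound $\ge\tfrac14$, and the main inequality from Lemma~\ref{lemMain} together with the bounds on $x_1, x_2, x_3, t, f_2$ from Claim~\ref{bounds}, and the refined bounds from Claim~\ref{newbounds}.  On top of those, I would also include the degree-sum inequality~\eqref{sideeq3} derived from the regularity argument applied to $X_1$, as it directly couples $f_2$ with $t$ and the $x_i$ and is what forced the sharp shape of the admissible region in the first place.

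With this setup, a moderately sized semidefinite program in the colored flag algebra yields an upper bound on $f_1-f_2+\tfrac34(t^2-t^3)$.  The target value $0.0003042$ is small and positive, so I expect the computation to be well-conditioned; the main obstacle will be choosing a sufficiently rich set of flags (the right vertex-level and the correct combinations of colored types) so that the semidefinite program is tight enough to produce a bound below this threshold, while keeping the computation small enough to certify rigorously.  As in Lemma~\ref{flag}, a sum-of-squares certificate output by the SDP solver, together with the linear combination of the encoded inequalities, provides the verifiable proof; the explicit certificate is provided in the supplementary files.
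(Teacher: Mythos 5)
Your proposal is correct and follows essentially the same route as the paper: a plain flag algebra computation on $3$-graphs with vertices colored by the four parts $X_1,X_2,X_3,T$, subject to $\{C_5^-,K_4^-\}$-freeness, edge density $\tfrac14$, and the bounds from Claims~\ref{bounds} and~\ref{newbounds}, with the target quantity expressed as a combination of colored $3$-vertex flag densities. The extra constraints you propose to load (near-regularity, \eqref{sideeq3}) are valid and harmless additions but are not needed in the paper's computation.
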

\begin{proof}
We are setting up a flag algebra model on $3$-graphs, in which vertices are partitioned into the $4$ parts. This additional partition restricts further the size of subgraphs we can compute with, but this smaller computation gives bounds sufficient for our purpose. Again, the $3$-graph has edge density $0.25$ and is $\{C_5^-,K_4^-\}$-free, and all bounds in Claims~\ref{bounds} and \ref{newbounds} apply to this partition. We now find the claimed upper bound for $f_1-f_2+\frac34(t^2-t^3)$ with the plain flag algebra method.
\end{proof}

From now on, we assume the partition $X_1\cup X_2\cup X_3\cup T$ is chosen so that the left side of \eqref{eqMain} is maximized. All bounds we have established hold for this partition.

Next, we will look at the link $L(v)$ of a single vertex $v$, the $2$-graph formed by the remaining pairs of vertices in edges containing $v$. The number of edges in $L(v)$ is $d(v)=\frac14{n\choose 2}+o(n^2)$. Further, this graph is triangle free as otherwise $v$ would be the degree $3$  vertex of a $K_4^-$ in $G_n$. 

Let $\{A,B,C\}=\{X_1,X_2,X_3\}$ such that the number of edges in $L(v)$ between $A$ and $B$ is maximized. This implies that $v\in C\cup T$.
We will use normalized sizes $a = |A|/n$, $b = |B|/n$ and $c = |C|/n$.

We want to find an upper bound for the number of edges in $L(v)$ inside $C$, $d_C(v)=d_C{n\choose 2}$. We know that the edge density in $C$ (in $\bar{G}\cup v$) is at most $e(C)\le \frac14$. 
This is still true if we blow up $v$ into a copy of $\bar{G}_{\mu n}$ for any $\mu\ge 0$, as the random sparcification has, with high probability, affected $d_C(v)$ proportionally. Therefore, for all $\mu\ge 0$,
\[
e(C){cn\choose 3}+\mu n d_C(v)+\frac14{\mu n\choose 3}\le
\frac14{(c+\mu)n\choose 3},
\]
so after dividing by ${n\choose 3}$,
\[
3\mu d_C\le \frac14(c^3+3c^2\mu+3c\mu^2)-e(C)c^3.
\]

On the other hand, every vertex in $T$ is in $\frac14{n\choose 2}+o(n^2)$ edges, so the total number of edges containing at least one vertex of $T$ is at most $\frac34 t{n\choose 3}$. 

For any $x,y,z> 0$ with $x+y+z=1$ we have
$\frac14(x^3+y^3+z^3)+6xyz\le \frac14$, with the maximum attained only for $x=y=z=\tfrac13$. As $a+b+c=1-t$, this scales to
$\frac14(a^3+b^3+c^3)+6abc\le \frac14(1-t)^3$.
Now, using also that $e(A),e(B)\le \frac14$,
\begin{align*}
    \tfrac14&\le e(C)c^3+e(A)a^3+e(B)b^3+6abc+f_1-f_2+\tfrac34 t\\
    &\le \tfrac14(a^3+b^3+c^3)+6abc-\tfrac14c^3+e(C)c^3+f_1-f_2+\tfrac34 t\\
    &\le \tfrac14(1-t)^3-\tfrac14c^3+e(C)c^3+f_1-f_2+\tfrac34 t.
\end{align*}
Therefore,
\[
e(C)c^3\ge \frac14 c^3-\frac34 t^2+\frac34 t^3-f_1+f_2.
\]
Together, this gives
\[
3\mu d_C\le \frac34(c^2\mu+c\mu^2)+\frac34(t^2-t^3)+f_1-f_2,
\]
and
\[
d_C\le \frac14(c^2+c\mu)+\frac1{3\mu}(\tfrac34(t^2-t^3)+f_1-f_2).
\]
This bound is minimized if
\[
\mu=2\sqrt{\frac{\tfrac34(t^2-t^3)+f_1-f_2}{3c}},
\]
so 
\[
d_C-\frac14c^2\le\sqrt{\frac{c(\tfrac34(t^2-t^3)+f_1-f_2)}{3}},
\]
and finally, for $d_C\ge \frac14c^2$,
\begin{align}
    3(d_C-\tfrac14c^2)^2\le c(\tfrac34(t^2-t^3)+f_1-f_2).
\end{align}

We need to establish two more bounds. For easier readability we will denote the set a vertex belongs to in its index.

Assume that $u_Av_A$ and $v_Av_B$ are edges in $L(v)$. Then, for any vertex $v_C$, $u_Avv_Av_Bv_Cu_A$ is a $C_5^-$ unless at least one of $u_Av_Bv_C$ and $v_Av_Bv_C$ is missing, and thus in $F_2$. We can count such sets of $4$ vertices containing a $P_3$ in $L(v)$ and a non-edge in $F_2$ in two ways. We can start with the $P_3$ and add any vertex in $C$ like we did above, giving a count of $p_3^{AAB}{n\choose 3}cn$ (giving implicitely the definitions of the density $p_3^{AAB}$ of such paths). 
Or, we can start with a non-edge in $F_2$, and add a vertex in $A$, giving a count of $f_2{n\choose 3}an$. The second choice is an overcount, as not every vertex in $A$ must yield a $P_3$, and as there can be more than one non-edge between these $4$ vertices. 
We use $\x{A},\x{B},\x{C},\x{T}$ to denote the single vertex graphs with the vertex in the respective set when writing the following inequalities in flag algebra notation. 
We get 
\begin{align}
    \p{AAB}*\x{C}&<f_2*\x{A},\\
    \p{ABB}*\x{C}&<f_2*\x{B},
\end{align}
where the second line follows from a symmetric argument.

For the second bound, assume that $u_Av_C$ and $v_Av_B$ are edges in $L(v)$. Then again, $vu_Av_Cv_Bv_Av$ is a $C_5^-$ unless at least one of $u_Av_Bv_C$ and $v_Av_Bv_C$ is missing, and thus in $F_2$. Denoting by $e^{AB}{n\choose 2}$ the number of edges from $A$ to $B$ (and at the same time $\e{AB}$ is an edge from $A$ to $B$ in flag algebra notation), similar counts as above give us $e^{AB}{n\choose 2}e^{AC}{n\choose 2}$ and $f_2{n\choose 3}|A|$ for the set of $4$ vertices, which yields
\begin{align}
    3\e{AB}*\e{AC}&< 2\x{A}*f_2,\\
        3\e{AB}*\e{BC}&< 2\x{B}*f_2.
\end{align}

Now we put all these bounds together, and formulate the following model for $L(v)$. As $f_1$ and $f_2$ do not exist inside the model, we replace them by their bounds
$f_2<(6\x{A}*\x{B}*\x{C}+ 0.196\x{T} + 0.366(1- \x{T})\x{T} - 0.221119)$ and
$f_1-f_2+\tfrac34 ( \x{T}^2-\x{T}^3)< 0.0003064$.

\begin{enumerate}[(a)]
    \item Four sets of vertices: $A,B,C,T$ 
    \item $\x{A}+\x{B}+\x{C}+\x{T}=\x{}=1$
    \item $0.31748<\x{A},\x{B},\x{C}<0.33833$
    \item $0.10622< \x{A}*\x{B},\x{A}*\x{C},\x{B}*\x{C}<0.11366$
    \item $0\le \x{T}<0.0109$
    \item triangle-free
    \item $\e{AB}\ge \e{AC},~\e{AB}\ge\e{BC}$
    \item  
    $\exx{}{} \geq \frac14$
    \item $6\x{A}*\x{B}*\x{C}+ 0.196\x{T} + 0.366(1- \x{T})\x{T} - 0.221119 \ge 0$
    \item 
        $\p{AAB}*\x{C}\le \x{A}*(6\x{A}*\x{B}*\x{C}+ 0.196\x{T} + 0.366(1- \x{T})\x{T} - 0.221119)$\\ 
        $\p{ABB}*\x{C}\le \x{B}*(6\x{A}*\x{B}*\x{C}+ 0.196\x{T} + 0.366(1- \x{T})\x{T} - 0.221119)$
    \item $3\e{AB}*\e{AC}\le 2\x{A}*(6\x{A}*\x{B}*\x{C}+ 0.196\x{T} + 0.366(1- \x{T})\x{T} - 0.221119)\\
        3\e{AB}*\e{BC}\le 2\x{B}*(6\x{A}*\x{B}*\x{C}+ 0.196\x{T} + 0.366(1- \x{T})\x{T} - 0.221119)$
    \item 
        $3(\e{CC}-\frac14\x{C}^2)^2\le \x{C}* 0.0003064$
\end{enumerate}

\begin{claim}[\flags]\label{cl:Lv}
    In $L(v)$, $\e{AB}>0.1942$. 
\end{claim}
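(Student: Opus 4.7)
The plan is to prove the bound by a direct application of the plain flag algebra method to the $2$-graph model (a)--(l) set up for the link $L(v)$. All the structural consequences of $\{C_5^-,K_4^-\}$-freeness and of the earlier partition lemmas have already been encoded as constraints on densities of small colored subgraphs in that model, so the remaining task is an optimization problem: minimize $\e{AB}$ subject to (a)--(l), and certify the resulting rational bound by a sum-of-squares decomposition, in the same spirit as Lemma~\ref{flag} and Claim~\ref{f1f2}.

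Heuristically one should expect a bound close to $\tfrac14$. The link is triangle-free with total edge density at least $\tfrac14$; the three parts $A,B,C$ are each of size close to $\tfrac13$ by Claims~\ref{bounds} and~\ref{newbounds}; and by the choice of labeling, the $AB$-pair is the densest of the three. Edges inside $A$, $B$, or $C$ are heavily restricted by triangle-freeness and by (l); edges incident to $T$ contribute only $O(t)$; and the $P_3$- and mixed-edge restrictions in (j)--(k) prevent shifting the remaining edges from the $AB$-pair to the $AC$- or $BC$-pair without creating non-edges in $F_2$ beyond what is allowed. Together these force essentially the full $\tfrac14$ of edges across the $AB$-pair, leaving a comfortable margin above the target $0.1942$.

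To carry this out I would pick types on $4$--$6$ labeled vertices (small enough to keep the SDP tractable with four vertex colors), enumerate the corresponding flags by computer, translate each of (a)--(l) into a linear inequality on flag densities --- multiplying by appropriate monomials in $\x{A},\x{B},\x{C},\x{T}$ to clear denominators --- and then minimize $\e{AB}$. The output yields a rational lower bound together with an explicit sum-of-squares certificate, which can be posted on the linked repository. The main obstacle will be numerical rather than conceptual: obtaining a clean certificate landing above $0.1942$ rather than some weaker value may require enlarging the flag set, partially breaking the $A\leftrightarrow B$ symmetry, or multiplying basic inequalities (for instance (h) by $\x{C}$ or (l) by $\x{T}$) to give the solver extra slack. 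If that still falls short, any tightening of the intermediate constants in (c)--(e) or in (l) (via a slightly sharper re-run of Claim~\ref{f1f2}) would feed directly into a better bound for $\e{AB}$, so the computation is expected to succeed at moderate size.
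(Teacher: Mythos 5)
Your proposal matches the paper's proof: Claim~\ref{cl:Lv} is established precisely by running a moderate plain flag algebra computation on the colored $2$-graph model (a)--(l) for $L(v)$, minimizing $\e{AB}$ subject to those constraints and extracting a sum-of-squares certificate. The surrounding discussion of heuristics and implementation details is consistent with how the paper proceeds, so there is no genuine difference in approach.
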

\begin{proof}
    In our model for $L(v)$, we find a lower bound for $\e{AB}$ with a moderate flag algebra computation.
\end{proof}
\begin{claim}
    $T=\emptyset$.
\end{claim}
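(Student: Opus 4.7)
The plan is to argue by contradiction. Suppose $v\in T$ and consider moving $v$ into the part $C=X_k$ identified in the setup preceding Claim~\ref{cl:Lv}. This produces a new partition of $V(G_n)$, and since the original partition was chosen to maximize the left side of~\eqref{eqMain}, it suffices to show that the move strictly increases the objective
\[
\Phi:=6x_1x_2x_3-f_2+0.196\,t+0.366\,t(1-t),
\]
contradicting maximality.

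To this end, I would enumerate the effects of the move. Write $\{A,B,C\}=\{X_1,X_2,X_3\}$ with $C=X_k$, and let $\alpha$ denote the number of edges of $L(v)$ between $A$ and $B$. Only triples involving $v$ can change their $F_2$-status, and of those, only triples $\{v,u_A,u_B\}$ with $u_A\in A$, $u_B\in B$ actually do so: before the move $v\in T$ precludes $F_2$-membership, and after the move such a triple joins $F_2$ iff it is a non-edge. Every other triple involving $v$ fails the ``one vertex in each $X_i$'' requirement both before and after. Hence $|F_2|$ grows by exactly $|A||B|-\alpha$, while $t$ drops by $1/n$ and $x_k$ rises by $1/n$. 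Combining the contributions, and using $|A||B|=x_ix_jn^2$ so that the leading parts of $\Delta(6x_1x_2x_3)=6|A||B|/n^3$ and $\Delta f_2=(|A||B|-\alpha)/{n\choose 3}$ cancel, one arrives at
\[
\Delta\Phi=\frac{1}{n}\left(\frac{6\alpha}{n^2}-0.562+0.732\,t\right)+o(1/n),
\]
where the constants $0.562=0.196+0.366$ and $0.732=2\cdot 0.366$ come from differentiating $0.196\,t+0.366\,t(1-t)$.

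Finally, Claim~\ref{cl:Lv} provides $\alpha>0.1942\,{n-1\choose 2}=0.0971\,n^2-o(n^2)$, so $6\alpha/n^2>0.5826$ for $n$ large, forcing $\Delta\Phi>0$ and giving the desired contradiction. The only genuine obstacle is already overcome by Claim~\ref{cl:Lv}, which encodes the $C_5^-$-freeness into a flag algebra model for $L(v)$; once that lower bound on the cross-density in $L(v)$ is in hand, the argument here is pure bookkeeping, with the comfortable numerical margin $0.5826-0.562>0.02$ absorbing the lower-order error terms.
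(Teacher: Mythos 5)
Your proof is correct and takes essentially the same route as the paper: assume $v\in T$, move $v$ into the part $C$ singled out before Claim~\ref{cl:Lv}, and show the objective $6x_1x_2x_3-f_2+0.196t+0.366t(1-t)$ strictly increases, contradicting the fact that the partition was chosen to maximize it. You are somewhat more careful than the paper in one respect: you compute the exact marginal change in the $t$-terms (giving $-0.562+0.732t$ per $1/n$), whereas the paper bounds the loss by the per-vertex average contribution $0.196+0.366(1-t)\le 0.562$; both are dominated by the gain $6\alpha/n^2 = 3e^{AB} + o(1) > 0.5826$ coming from Claim~\ref{cl:Lv}, so the contradiction goes through either way. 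Your identification of which triples change $F_2$-status and the resulting cancellation between $\Delta(6x_1x_2x_3)$ and $\Delta f_2$ is exactly the bookkeeping the paper implicitly relies on.
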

\begin{proof}
Assume that $v\in T$.
    As a member of $T$, it contributes (normalized) $0.196+0.366(1-t)\le 0.562$ to the left side of \eqref{eqMain}. If we move it to $C$, it contributes $6\cdot\tfrac12 e^{AB}>0.58$. 
    As the partition maximizes the left side, this is a contradiction. As $v$ was chosen arbitrarily, the claim follows.
\end{proof}
At this point, we could recompute and improve a few of our bounds using the fact that $T=\emptyset$. It turns out that this is not needed, so we skip this step.
\begin{claim}[\flags]\label{greenAB}
    Every vertex is in at most 
    $0.01452{n\choose 2}$ 
    non-edges in $F_2$. 
\end{claim}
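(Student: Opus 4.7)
The plan is to translate the count of $F_2$ non-edges containing $v$ into a density in the flag algebra model for $L(v)$ already set up, and bound it there.

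Since $T=\emptyset$, the partition $\{A,B,C\}$ chosen in the setup of $L(v)$ satisfies $v\in C$, so every non-edge $\{v,u,w\}\in F_2$ has one of $u,w$ in $A$ and the other in $B$. Consequently, the number of non-edges in $F_2$ containing $v$ equals $|A||B|-e_{AB}(L(v))$, the number of non-edges between $A$ and $B$ in the link $L(v)$. As a fraction of $\binom{n}{2}$, this quantity asymptotically equals $2\x{A}\x{B}-\e{AB}$ in the flag algebra notation of the model, using the identity $2\x{A}\x{B}=\e{AB}+\bar{\e}{AB}$ expressing that an unordered pair with one vertex in $A$ and one in $B$ is either an edge or a non-edge of $L(v)$.

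The proof therefore consists of adding the objective $2\x{A}\x{B}-\e{AB}$ to the model of $L(v)$ with constraints (a)--(l) and carrying out a moderate flag algebra computation to certify the upper bound $0.01452$. A naive combination of $\x{A}\x{B}<0.11366$ from Claim~\ref{newbounds} with $\e{AB}>0.1942$ from Claim~\ref{cl:Lv} yields only $2(0.11366)-0.1942\approx 0.033$, which is well above the target, so the flag algebra must exploit the full strength of the model.

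The main obstacle is therefore whether the constraints already present in the model — particularly (i)--(l) relating $f_2$ to $P_3$ counts and $2$-edge-path densities, together with the triangle-freeness of $L(v)$ and the bound on edges inside $C$ — combine via a sum-of-squares certificate to push the bound from $\approx 0.033$ down to $0.01452$. Since these were precisely the ingredients that made Claim~\ref{cl:Lv} succeed on the closely related quantity $\e{AB}$, we expect the same moderate-sized computation to handle this objective as well.
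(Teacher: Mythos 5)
Your proposal matches the paper's proof: both fix an arbitrary vertex $v$, note $v\in C$ since $T=\emptyset$, identify non-edges of $F_2$ through $v$ with non-edges between $A$ and $B$ in $L(v)$, and bound the resulting density $2\x{A}*\x{B}-\e{AB}$ by a moderate flag algebra computation in the model (a)--(l). The extra observation that naively combining Claims~\ref{newbounds} and~\ref{cl:Lv} is insufficient is a useful sanity check but does not change the argument.
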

\begin{proof}
    For an arbitrary vertex $v$, we use the model for $L(v)$ with $T=\emptyset$. Remember that $v\in C\cup T$, so in fact $v\in C$ here. Every non-edge in $L(v)$ between $A$ and $B$ corresponds to a non-edge in $F_2$ containing $v$.  Using a moderate flag algebra computation, we find an upper bound for $2 \x{A}*\x{B}-\e{AB}$, the density of such non-edges in $L(v)$. 
\end{proof}

While some of our arguments relied on being in $\bar{G}_n$, note that all resulting bounds apply to $G_n$ as well. In particular the only difference is that $F_1$ can only be larger in $G_n$, and $F_2$ can only be smaller. The only stated bound that may not apply to $G_n$ is the bound in Claim~\ref{f1f2}. But this bound was only used as an intermediate step to find further bounds for the partition, which then apply to $G_n$ as well as the partition is the same. The remaining argument starting here is in $G_n$.

\begin{claim}\label{maxdegree}
    In the partition $X_1\cup X_2\cup X_3$ of $G_n$, $F_1=F_2=\emptyset$.
\end{claim}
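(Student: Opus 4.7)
My plan is to establish $F_2 = \emptyset$ first and then derive $F_1 = \emptyset$ as a direct corollary. The second implication is short: if $\{u, v, w\} \in F_1$ with, say, $u, v \in X_1$ and $w \in X_2$, then for every $x \in X_3$ the triples $\{u, w, x\}$ and $\{v, w, x\}$ are tripartite and hence edges of $G_n$ (using $F_2 = \emptyset$). Together with the funky edge $\{u, v, w\}$, this produces a triangle on $\{u, v, x\}$ in the link $L(w)$, contradicting the triangle-freeness of $L(w)$ (which follows from $K_4^-$-freeness of $G_n$). The other funky-edge types are handled by the same argument applied to the link of the vertex lying in the third part.

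For the main assertion $F_2 = \emptyset$, I argue by contradiction, starting from a funky non-edge $\{a, b, c\} \in F_2$ with $a \in X_1$, $b \in X_2$, $c \in X_3$. By extremality of $G_n$, adding the tripartite triple $\{a, b, c\}$ as a new edge must create either a new $K_4^-$ or a new $C_5^-$. A new $K_4^-$ on $\{a, b, c, d\}$ requires two of the three triples through $d$ to already be edges of $G_n$, which pins down at least one funky edge in $F_1$ attached to $\{a, b, c\}$. A new $C_5^-$ uses $\{a, b, c\}$ as one of its four edges, plus three further edges on two additional vertices $v_4, v_5$; a case analysis over the placements of $v_4, v_5$ in $X_1 \cup X_2 \cup X_3$ shows that in every case at least one of these three supporting edges must itself be funky. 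In either outcome, the assumed $\{a, b, c\} \in F_2$ forces a specific funky edge of $F_1$ lying locally near $\{a, b, c\}$.

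Conversely, the quantitative form of the triangle-in-link argument used above says that each funky edge of type $X_i X_i X_j$ forces, for every vertex $x$ in the third part $X_k$, that at least one of two specific tripartite triples is a funky non-edge. Supplementing this with the per-vertex bound from Claim~\ref{greenAB}, the partition-optimality swap constraints (moving $a$ from $X_1$ to $X_2$, or swapping $a$ with a suitably chosen vertex of $X_2$), and the part-size bounds from Claim~\ref{newbounds}, I aim to derive a numerical contradiction to the assumed existence of $\{a, b, c\}$.

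The main obstacle will be this final quantitative step. The earlier bounds only give $|F_2| \leq 0.001104 \binom{n}{3}$, which is still potentially positive, so the closing argument must pass from a density statement to exact emptiness. I expect the key will be combining the ``$F_2 \Rightarrow F_1$'' and ``$F_1 \Rightarrow F_2$'' implications above in a way that is incompatible with partition optimality: any putative funky non-edge should admit a local modification of the partition, touching only a constant number of vertices, that strictly increases $6 x_1 x_2 x_3 - f_2$, contradicting the choice of the partition as the maximizer in Lemma~\ref{lemMain}.
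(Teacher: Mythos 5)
Your outline correctly identifies several of the right ingredients — the $K_4^-$-avoidance principle (a funky edge of type $X_iX_iX_j$, together with any $w\in X_k$, forces at least one of two tripartite triples through $w$ to lie in $F_2$), the role of Claim~\ref{greenAB} as a per-vertex bound on funky non-edges, and the part-size bounds from Claim~\ref{newbounds}. The implication ``$F_2=\emptyset\Rightarrow F_1=\emptyset$'' via a triangle in the link is also correct. However, there is a genuine gap: the core step, the quantitative argument that actually forces $F_2=\emptyset$ (or $F_1=\emptyset$) rather than merely $f_1,f_2=o(1)$, is not carried out. You acknowledge this yourself (``the main obstacle will be this final quantitative step''), and the strategy you sketch for closing it — a local modification of the partition that strictly increases $6x_1x_2x_3-f_2$ — is not what the paper does and is not obviously realizable; it is not clear that moving a single vertex near a funky non-edge improves the objective, and no candidate move is exhibited.

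The paper instead uses a global double-counting argument. It first shows $f_1\ge f_2$ by a swap on the \emph{hypergraph}, not the partition: deleting all of $F_1$ and adding all of $F_2$ leaves the graph $\{C_5^-,K_4^-\}$-free (any tight walk hitting two parts must stay tripartite, and the resulting pure top-level blow-up forbids both), so if $|F_2|>|F_1|$ this contradicts extremality. Your proposal omits this step, but it is crucial for what follows. Assuming $F_1\ne\emptyset$, the paper then counts incidence pairs $(\text{funky edge},\text{funky non-edge})$ sharing two vertices: each funky edge contributes $\ge x_3 n$ pairs (by the $K_4^-$-avoidance you also identify), so since $f_1\ge f_2$ some funky non-edge $v_iv_jv_k$ is hit by $> 0.318 n$ funky edges, hence (by pigeonhole over the six orderings) by $> 0.053 n$ funky edges of one fixed type, say $v_iu_jv_j$ with $u_j$ ranging over a set $Z_j$. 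For each $u_j\in Z_j$ and each pair $(w_i,w_k)$, a $C_5^-$ on $\{v_j,v_i,u_j,w_k,w_i\}$ is avoided only if one of three specific tripartite triples is in $F_2$. Bounding each of the three types — one by the global bound on $f_2$, and two by Claim~\ref{greenAB} applied at $v_i$ and $v_j$ respectively — yields strictly fewer blockers than choices $(w_i,u_j,w_k)$, a contradiction. This closing computation is exactly the piece missing from your proposal; without it, the proof does not go through.
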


\begin{proof}
For ease of notation, indices of vertices indicate the index of the set they belong to.
If $|F_2|>|F_1|$, we can delete all edges in $F_1$ and add all edges in $F_2$ to create a graph with more edges, contradicting the extremality of $G_n$. So we may assume that $f_1\ge f_2$ and $F_1\ne\emptyset$.

For every $v_k$, either $v_iv_jv_k$ or $v_iu_jv_k$ is a funky non-edge to avoid a $K_4^-$, so each funky edge $v_iu_jv_j$ intersects at least $x_kn\ge x_3n$ funky non-edges in two vertices. Thus, there are at least $f_1{n\choose 3}x_3n$ such pairs of intersecting edges  and non-edges. Counting the pairs the other way and averaging, we see that there exists a funky non-edge intersecting at least $x_3n>0.31824n$
funky edges in two vertices. 

\begin{center}
\vc{
\begin{tikzpicture}
\draw
(30:2) coordinate(j) circle(1cm) ++(1,0)node[right]{$X_j$}
(150:2) coordinate(i) circle(1cm) ++(-1,0)node[left]{$X_i$}
(270:2) coordinate(k) circle(1cm) ++(0,-1)node[below]{$X_k$}

(i) ++(0,-0.5) coordinate(vi) node[vtx,label=left:$v_i$]{}
(j) ++(0,-0.5) coordinate(vj) node[vtx,label=right:$v_j$]{}
(j) ++(0,0.5) coordinate(uj) node[vtx,label=right:$u_j$]{}
(k) ++(-0.2,0) coordinate(vk) node[vtx,label=left:$v_k$]{}
;
\draw[hyperedgeB] 
(vi) to[out=-20,in=200,looseness=1.2] (vj) to[out=220,in=80,looseness=1.2] (vk) to[out=100,in=-30,looseness=1.2] (vi);

\draw[hyperedgeB] 
(vi) to[out=-20,in=220,looseness=1.2] (uj) to[out=240,in=80,looseness=1.2] (vk) to[out=100,in=-30,looseness=1.2] (vi);

\draw[hyperedgeP] 
(vi) to[out=20,in=160,looseness=1.2] (vj) to[out=110,in=250,looseness=1.2] (uj) to[out=180,in=40,looseness=1.2] (vi);
\end{tikzpicture}
}

\end{center}

Taking symmetry of $i,j,k$ into account, we may assume that $v_iv_jv_k$ is such a funky non-edge with more than 
$\frac{0.31824n}{6}= 0.05304n$ 
vertices $u_j$ such that $v_iu_jv_j$ is a funky edge. 
Let $Z_j=\{u_j:v_iu_jv_j\in E\}$, $z_jn=|Z_j|$. 

For any $w_i,w_k$, and $u_j\in Z_j$, notice that $v_jv_iu_jw_kw_iv_j$ is a $C_5^-$  unless one of $v_iu_jw_k$, $w_iu_jw_k$, $w_iv_jw_k$ is a funky non-edge.

\begin{center}
\vc{
\begin{tikzpicture}
\draw
(30:2) coordinate(j) circle(1cm) ++(1,0)node[right]{$X_j$}
(150:2) coordinate(i) circle(1cm) ++(-1,0)node[left]{$X_i$}
(270:2) coordinate(k) circle(1cm) ++(0,-1)node[below]{$X_k$}

(i) ++(0,-0.5) coordinate(vi) node[vtx,label=left:$v_i$]{}
(i) ++(0, 0.5) coordinate(wi) node[vtx,label=left:$w_i$]{}
(j) ++(0,-0.5) coordinate(vj) node[vtx,label=right:$v_j$]{}
(j) ++(0,0.5) coordinate(uj) node[vtx,label=right:$u_j$]{}
(k) ++(-0.2,0) coordinate(vk) node[vtx,label=left:$v_k$]{}
(k) ++(0.2,0) coordinate(wk) node[vtx,label=right:$w_k$]{}
;

\draw[hyperedgeB] 
(wi) to[out=0,in=200,looseness=1.2] (uj) to[out=210,in=90,looseness=1.2] (wk) to[out=100,in=-10,looseness=1.2] (wi);

\draw[hyperedgeB] 
(vi) to[out=-40,in=230,looseness=1.2] (uj) to[out=250,in=130,looseness=1.2] (wk) to[out=140,in=-50,looseness=1.2] (vi);

\draw[hyperedgeB] 
(wi) to[out=-40,in=220,looseness=1.2] (vj) to[out=250,in=40,looseness=1.2] (wk) to[out=60,in=-60,looseness=1.2] (wi);

\draw[hyperedgeP] 
(vi) to[out=20,in=170,looseness=1.2] (vj) to[out=110,in=250,looseness=1.2] (uj) to[out=180,in=40,looseness=1.2] (vi);
\end{tikzpicture}
}
\end{center}
In total, by Claim~\ref{newbounds}, we have 
$x_iz_jx_kn^3\ge 0.10648z_jn^3$ 
choices for $w_i,u_j,w_k$. 

By Claim~\ref{greenAB}, we can not have many non-edges $w_iv_jw_k$, as they are all incident to $v_j$. To be precise, we can have fewer than
$0.0146{n\choose 2}<0.0073n^2$ 
such non-edges.
Thus, out of the more than $0.10648z_jn^3$ choices of $w_i,u_j,w_k$,
at least $(0.10648-0.0073)z_jn^3>0.0052n^3$ choices must yield a non-edge of the types $v_iu_jw_k$ or $w_iu_jw_k$.

 We can have at most $f_2{n\choose 3}<0.00018n^3$ non-edges $w_iu_jw_k$.  This still leaves more than $0.005n^3$ choices which force non-edges $v_iu_jw_k$. But since again by Claim~\ref{greenAB}, $v_i$ is in at most $0.0073n^2$ such non-edges, this accounts for at most $0.0073x_in^3<0.0025n^3$ choices, a contradiction proving the claim.
 %
\end{proof}

\begin{claim}\label{dense}
    The edge density $d$ in $G_n$ is $\frac14+o(1)$. Further, $x_i=\frac13+o(1)$ for all $i$. 
\end{claim}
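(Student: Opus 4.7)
The plan is to combine the structural results already in hand with a $\limsup$ computation. Since the preceding claims give $T = \emptyset$ and $F_1 = F_2 = \emptyset$ (Claim~\ref{maxdegree}), every triple with one vertex in each $X_i$ is an edge, no edge has two vertices in one part, and
\[
|E(G_n)| = x_1 x_2 x_3 n^3 + \sum_{i=1}^3 e(X_i).
\]
Each induced $G_n[X_i]$ is itself $\{C_5^-, K_4^-\}$-free, so $e(X_i) \le |E(G_{|X_i|})|$. The lower bound on the density is immediate: $H_n$ is $\{C_5^-, K_4^-\}$-free (since $K_4^- = C_4^-$ is tightly connected but not $3$-partite, while all tight blocks of $H_n$ are $3$-partite) with edge density $\frac14 + o(1)$, so the extremality of $G_n$ gives $\alpha_n := |E(G_n)|/\binom{n}{3} \ge \frac14 - o(1)$.

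For the matching upper bound I would use a $\limsup$ argument to sidestep the circularity. Set $\alpha^* = \limsup_n \alpha_n$, which is finite by Lemma~\ref{edgeBound}. Choose a subsequence $n_k$ along which $\alpha_{n_k} \to \alpha^*$ and, passing to a further subsequence if needed, the fractions $x_i^{(n_k)}$ converge to some $y_i \in [0.317, 0.339]$ with $y_1+y_2+y_3 = 1$ (by Claim~\ref{newbounds}); crucially this forces $|X_i| \to \infty$. Dividing the displayed decomposition by $\binom{n_k}{3}$ and taking $\limsup$ on both sides yields
\[
\alpha^* \le 6 y_1 y_2 y_3 + \alpha^*\bigl(y_1^3 + y_2^3 + y_3^3\bigr).
\]
Now use $1 = (y_1+y_2+y_3)^3$ to rewrite $1 - \sum_i y_i^3 = 3 \sum_{i \ne j} y_i^2 y_j + 6 y_1 y_2 y_3$, together with the AM-GM inequality $\sum_{i \ne j} y_i^2 y_j \ge 6 y_1 y_2 y_3$ (equality iff $y_1=y_2=y_3$); rearranging gives $\alpha^* \le \frac14$ with equality forcing $y_1=y_2=y_3=\frac13$.

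Combined with the lower bound, $\alpha^* = \frac14$, so every subsequential limit of $(x_1^{(n)}, x_2^{(n)}, x_3^{(n)})$ is $(\tfrac13, \tfrac13, \tfrac13)$, yielding $d = \frac14 + o(1)$ and $x_i = \frac13 + o(1)$. The main obstacle I anticipate is the self-referential shape of the recursive upper bound, which depends on $|E(G_m)|$ for $m<n$; the $\limsup$ framing handles this cleanly, replacing a quantitative induction on error terms with a single inequality on subsequential limit points, at the price of having to juggle convergence of both $\alpha_n$ and the part sizes along the same subsequence.
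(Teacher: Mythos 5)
Your proof is correct and follows essentially the same approach as the paper: both decompose the edge count into transversal triples plus edges inside each part (using $T=F_1=F_2=\emptyset$), set up the recursive density equation $d = 6x_1x_2x_3 + d(x_1^3+x_2^3+x_3^3) + o(1)$, and conclude $d=\tfrac14$, $x_i=\tfrac13$ from the lower bound $H_n$ provides together with an AM-GM-type optimization. Your $\limsup$ framing is a slightly more careful treatment of the recursive self-reference (which the paper handles informally via ``the subgraphs spanned by the $X_i$ are extremal themselves, and thus have $(d+o(1))\binom{|X_i|}{3}$ edges''), but the core argument is identical.
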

\begin{proof}
    As $F_2=\emptyset$, the subgraphs spanned by the $X_i$ are extremal themselves, and thus have $(d+o(1)){|X_i|\choose 3}$ edges. Therefore,
    \[
    d=6x_1x_2x_3+(d+o(1))(x_1^3+x_2^3+x_3^3).
    \]
    For fixed $d<0.2502$ and $0.3<x_3\le x_2\le x_1<0.35$, the right side is maximized by $x_1=x_2=x_3=\frac13$. Solving for $d$ gives $d=\frac14+o(1)$.
\end{proof}
To finish the proof of Lemma~\ref{thmmain2}, the only part remaining to show is that the partition is balanced.
\begin{claim}\label{balance}
    For large enough $n$, $|X_1|-|X_3|\le 1$.
\end{claim}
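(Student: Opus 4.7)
The plan is to argue by contradiction: assuming $|X_1| - |X_3| \ge 2$, I construct a strictly larger $\{K_4^-, C_5^-\}$-free $3$-graph on $n$ vertices by rebalancing the partition, contradicting the extremality of $G_n$. The starting point is to leverage Claim~\ref{maxdegree}: since no funky triple appears in $G_n$, every cross triple $\{v_1, v_2, v_3\}$ with $v_i \in X_i$ is an edge, and the only freedom lies inside each $G_n[X_i]$. A short case analysis shows that any $K_4^-$ or $C_5^-$ in $G_n$ sits in a single part---a $K_4^-$ whose three edges share a common vertex cannot use a cross edge without forcing a funky edge on the opposite pair, while a tight walk of length four through the cross structure cycles rigidly through the three parts and cannot close to a $5$-vertex $C_5^-$. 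Hence each $G_n[X_i]$ is an extremal $\{K_4^-, C_5^-\}$-free graph, and writing $\ex(m)$ for the maximum edge count of such a graph on $m$ vertices,
\[
|E(G_n)| = F(|X_1|, |X_2|, |X_3|), \qquad F(a, b, c) := abc + \ex(a) + \ex(b) + \ex(c).
\]
The same construction realizes $F(a, b, c)$ as a valid $\{K_4^-, C_5^-\}$-free graph for every partition $a + b + c = n$, so $(|X_1|, |X_2|, |X_3|)$ must maximize $F$.

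To compare $F$ at nearby partitions I need sharp control of $D(m) := \ex(m+1) - \ex(m)$, which I would extract by strong induction on $n$. Assuming Lemma~\ref{thmmain2} for all sufficiently large $m < n$, the extremal graph on $m$ vertices inherits the balanced $3$-partite structure, yielding $\ex(m) = m_1 m_2 m_3 + \sum_i \ex(m_i)$ with $m_i \in \{\lfloor m/3 \rfloor, \lceil m/3 \rceil\}$. Unrolling this recursion gives $D(3q) = q^2 + D(q)$, $D(3q+1) = q(q+1) + D(q)$, $D(3q+2) = (q+1)^2 + D(q)$; writing $D(m) = m^2/8 + \epsilon(m)$ one finds $\epsilon(3q) = \epsilon(q)$ and $\epsilon(3q+r) - \epsilon(q) = O(q)$ for $r \in \{1,2\}$, and geometric summation yields $|\epsilon(m)| \le m/4 + O(1)$.

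Finally, set $a := |X_1|$, $b := |X_2|$, $c := |X_3|$ and assume $a - c \ge 2$. Then
\[
F(a-1, b, c+1) - F(a, b, c) = b(a-c-1) - \bigl(D(a-1) - D(c)\bigr) = (a-c-1)\Bigl[b - \tfrac{a+c-1}{8}\Bigr] - \bigl(\epsilon(a-1) - \epsilon(c)\bigr).
\]
Using Claim~\ref{newbounds} to bound $b \ge 0.317\,n$ and $a + c = n - b \le 0.683\,n$, the main term is at least $(a-c-1)(0.317 - 0.086)\,n \ge 0.23\,n$, while the $\epsilon$-error satisfies $|\epsilon(a-1) - \epsilon(c)| \le (a+c-1)/4 + O(1) \le 0.171\,n + O(1)$. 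The difference is therefore strictly positive for large $n$, contradicting the maximality of $F$ at the chosen partition, and so $|X_1| - |X_3| \le 1$. The main obstacle in this plan is the second step: the asymptotic estimate $D(m) = m^2/8 + o(m^2)$ afforded by Claim~\ref{dense} alone would drown the linear-scale comparison, so the refined bound $|\epsilon(m)| \le m/4 + O(1)$ with an explicit constant strictly below $1/4$ is essential and must be extracted from the inductive recursion.
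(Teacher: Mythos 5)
The core rebalancing idea matches the paper's: assume $|X_1|-|X_3|\ge 2$, rebalance one vertex from $X_1$ into $X_3$, and show the resulting $\{K_4^-,C_5^-\}$-free construction has strictly more edges. Your structural observations---that every $K_4^-$ or $C_5^-$ lies inside a single part because $F=T=\emptyset$, so the parts decouple and each $G_n[X_i]$ is itself extremal---are correct (the paper makes the same observations in Claim~\ref{dense}). But your method for bounding $D(a-1)-D(c)$ has a genuine circularity gap that the paper avoids.

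You propose to control $D(m)=\ex(m+1)-\ex(m)$ via the recursion $D(3q+r)=\cdot+D(q)$, to be ``extracted by strong induction on $n$,'' and then bound $|\epsilon(m)|\le m/4+O(1)$. But that recursion is a consequence of precisely the balance claim you are trying to prove. Setting this up as a strong induction does not close the loop: to handle Claim~\ref{balance} at scale $n$ you need the recursion at scales $a-1,c\approx n/3$, and iterating it descends below the induction threshold $N$ in $O(\log n)$ steps. Below that threshold $\epsilon(m)$ is only bounded by $O(m^2)=O(N^2)$ (from the crude $o(m^2)$ of Claim~\ref{dense}), so the additive ``$O(1)$'' in your bound is really $O(N^2)$ where $N$ is the balance threshold itself. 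The final comparison then needs $n\gg N^2$, so the threshold would have to satisfy $N\gg N^2$, which is impossible. The base case cannot be discharged without a separate (and infeasible, since $N$ is a flag-algebra threshold) verification on the range $[N,O(N^2))$.

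The paper sidesteps all of this. Instead of computing $D$ exactly, it removes a minimum-degree vertex from $G_n[X_1]$ and duplicates a maximum-degree vertex in $G_n[X_3]$. The inside-part loss and gain are then bounded by the average degrees $3\ex(|X_i|)/|X_i|$, and the monotonicity of $m\mapsto \ex(m)/\binom{m}{3}$ makes the two $o(1)$'s combine in the favourable direction, giving net loss inside $\le (\tfrac14+o(1))(\binom{|X_1|}{2}-\binom{|X_3|}{2})=(\tfrac1{12}+o(1))(|X_1|-|X_3|)n$, while the cross gain is $|X_2|(|X_1|-|X_3|-1)=(\tfrac13+o(1))(|X_1|-|X_3|-1)n$. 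This closes the argument purely from the density estimate of Claim~\ref{dense}, without ever knowing $D(m)$ to $o(m^2)$ precision. (In fact, if you bound $D(a-1)\le 3\ex(a)/a$ and $D(c)\ge 3\ex(c)/c$ by these same vertex-removal/duplication moves, your $F$-comparison closes too, and the $\epsilon$-recursion is entirely unnecessary.) As a minor point, your diagnosis that a constant ``strictly below $1/4$'' is essential is also not quite right---your own numbers show a $\sim 0.06n$ slack at coefficient $1/4$---but the real issue is the unresolvable circular dependence, not the numerical margin.
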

\begin{proof}
    For the sake of contradiction, assume that $|X_1|-1\ge |X_3|+1$. Remove one vertex of minimum degree from $X_1$ and duplicate a vertex of maximum degree from $X_3$. This way we are deleting at most $(\tfrac14+o(1)){|X_1|\choose 2}$ edges from $X_1$, and add at least $(\tfrac14+o(1)){|X_3|\choose 2}$ edges to $X_3$, for a net loss of at most 
    \begin{align*}
        (\tfrac14+o(1))\left({|X_1|\choose 2}-{|X_3|\choose 2}\right)
        &=(\tfrac18+o(1))(|X_1|+|X_3|)(|X_1|-|X_3|)\\
        &=(\tfrac1{12}+o(1))(|X_1|-|X_3|)n
    \end{align*}
    edges inside these two sets. On the other hand, we are gaining 
    \begin{align*}
        (|X_1|-1)|X_2|(|X_3|+1)-|X_1||X_2||X_3|
        &=(|X_1|-|X_3|-1)|X_2|\\
        &=(\tfrac13+o(1)(|X_1|-|X_3|-1)n
    \end{align*}
    edges spanning all three sets. As $|X_1|-|X_3|\ge 2$,
    we have $2(|X_1|-|X_3|-1)\ge |X_1|-|X_3|$, so we gained a total of at least $(\tfrac1{12}+o(1))n$ edges, contradicting the maximality of $G_n$.
\end{proof}
This establishes Lemma~\ref{thmmain2}. 
\end{proof}

\section{Hypergraph limits and Tur\'an density}\label{limit}

The Hypergraph Removal Lemma can be deduced from the celebrated Hypergraph Regularity Lemma (independently proved by  R\"odl, Nagle,  Skokan, Schacht and Kohayakawa \cite{Rodl2005}, and Gowers \cite{Gowers2007}). While merely stating the regularity lemma is beyond the scope of this paper, the removal lemma is easy to state and use.
\begin{theorem}[Hypergraph Removal Lemma (\cite{Gowers2007}, \cite{Rodl2005})]
    Let $r\ge 2$, and let $H$ be an $r$-uniform hypergraph on $k$ vertices. Let $G$ be an $r$-uniform hypergraph on $n$ vertices containing $o(n^k)$ (not necessarily induced) copies of $H$. Then one may remove $o(n^r)$ edges from $G$ so that the resulting hypergraph is $H$-free.
\end{theorem}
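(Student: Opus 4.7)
The plan is to follow the now-standard route from the Hypergraph Regularity Lemma to removal, mirroring Ruzsa--Szemer\'edi's original graph case. First I would apply the Hypergraph Regularity Lemma to $G$ with carefully chosen regularity and density parameters $\eps \gg \delta > 0$ depending only on $k,r,H$. This produces a partition $V(G) = V_1 \cup \cdots \cup V_t$, together with, for each $s = 2, \ldots, r-1$, a partition of the $s$-subsets of $V(G)$ lying inside each $V_{i_1} \cdots V_{i_s}$ into ``polyads'' such that, for most $r$-partite polyads, the induced $r$-uniform hypergraph is $\eps$-regular with some density $d \in [0,1]$.

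Next I would clean $G$ by discarding every edge of $G$ that lies in a ``bad'' $r$-partite configuration of this partition: specifically, edges lying over polyads that are not $\eps$-regular, polyads of density below $\delta$, or polyads involving a part $V_i$ with $|V_i|$ too small. A straightforward counting shows that the total number of removed edges is at most $O(\eps + \delta)\cdot n^r$, which we can drive below any prescribed $\eps'\cdot n^r$ by choosing the parameters appropriately; this establishes the $o(n^r)$ removal bound.

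The crux is then the Counting (or Extension) Lemma for hypergraph regularity: if the cleaned hypergraph $G'$ still contained even a single copy of $H$, then by refining to the polyad structure covering that copy, all participating polyads would be $\eps$-regular and of density $\ge \delta$. The counting lemma says the number of copies of $H$ using those specific polyads is then
\[
\bigl(1 \pm o_{\eps \to 0}(1)\bigr)\cdot \prod_{e \in E(H)} d_e \cdot \prod_{j=1}^{k} |V_{\phi(j)}|,
\]
which, since each $|V_{\phi(j)}| = \Omega(n)$ and each $d_e \ge \delta$, is $\Omega_{\delta,H}(n^k)$. This contradicts the hypothesis that $G$ (hence $G'$, a subgraph) contains only $o(n^k)$ copies of $H$, provided $\eps$ was chosen small enough relative to $\delta,H$. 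Hence $G'$ is $H$-free, completing the proof.

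The main obstacle is bookkeeping the parameter hierarchy $1/n \ll \eps \ll \delta \ll 1/k, 1/r$ so that the counting lemma's error term is genuinely smaller than the guaranteed count, and so that the cleaning step removes only $o(n^r)$ edges; this is where all the technical depth of the regularity method is hidden, and is precisely why the authors of the paper quote the statement rather than reprove it.
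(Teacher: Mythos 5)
The paper does not prove this theorem at all: it cites it from \cite{Gowers2007} and \cite{Rodl2005} and explicitly declines even to state the Hypergraph Regularity Lemma, so there is no ``paper's own proof'' to compare against. Your sketch follows the standard (and essentially the only known) route — regularize, clean out edges lying over irregular, sparse, or small polyads, and invoke the counting/extension lemma to show that a single surviving copy of $H$ would force $\Omega(n^k)$ copies — which is the correct outline of the Gowers and R\"odl--Nagle--Schacht--Skokan arguments, and you rightly flag that all the genuine difficulty is hidden in the regularity and counting lemmas and in the parameter hierarchy. As a sketch it is accurate; as a proof it is of course far from self-contained, which matches the paper's own decision to treat the result as a black box.
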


With this, Theorem~\ref{Turan} easily follows from Lemma~\ref{thmmain2}.
\begin{proof}[Proof of Theorem~\ref{Turan}]
    By Proposition~\ref{noK4-}, we can use the Hypergraph Removal Lemma to remove $o(n^3)$ edges from $F_n$ to arrive at a $\{C_5^-,K_4^-\}$-free graph $F_n'$. Writing $e(H)$ for the edge density of a hypergraph $H$, we have
\[
e(G_n)+o(1)=e(H_n)\le e(F_n)=e(F_n')+o(1)\le e(G_n)+o(1)=\frac14+o(1),
\]
implying $e(F_n)=\frac14+o(1)$ and thus
Theorem~\ref{Turan}.
\end{proof}

For a $3$-graph $H$, we write $H[t]=H(E_t,E_t,\ldots,E_t)$ for the balanced blow-up of $H$ by $|H|$ copies of the empty $3$-graph on $t$ vertices.
Proposition~\ref{noK4-} is a special instance of a more general Theorem (see section 2 in~\cite{Keevash2011}).
\begin{theorem}\label{blow}
    For every $3$-graph $H$ and positive integer $t$, if a $3$-graph $G$ on $n$ vertices has no subgraph isomorphic to $H[t]$, then $p(H,G)=o(1)$.
\end{theorem}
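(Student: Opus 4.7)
The plan is to deduce Theorem~\ref{blow} from the multipartite blow-up principle for hypergraphs, sometimes called the Erd\H{o}s Box Theorem, which asserts that every $k$-partite $k$-uniform hypergraph on parts of size $n$ with at least $\delta n^k$ edges contains a complete $k$-partite $k$-uniform subhypergraph $K^{(k)}_{t,\ldots,t}$, provided $n$ is large enough in terms of $t,\delta,k$. Granting this, the work reduces to producing such a $k$-partite structure out of the many copies of $H$ in $G$.

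Set $k=|V(H)|$, and suppose for contradiction that some $3$-graph $G$ on $n$ vertices has $p(H,G)\ge \varepsilon$ yet contains no copy of $H[t]$. Each of the $\varepsilon\binom{n}{k}$ induced copies of $H$ in $G$ yields $k!/|\operatorname{Aut}(H)|$ labeled embeddings $\phi\colon V(H)\to V(G)$, for a total of at least $c(\varepsilon,H)\,n^k$ labeled embeddings whose image induces a copy of $H$. I would then partition $V(G)$ uniformly at random into $k$ sets $W_1,\ldots,W_k$; the probability that a fixed labeled embedding places vertex $i$ of $H$ into $W_i$ is $k^{-k}$, so by linearity of expectation there is some partition for which at least $c(\varepsilon,H)\,k^{-k}\,n^k$ labeled embeddings are ``rainbow'', i.e.\ send vertex $i$ of $H$ into $W_i$ for each $i$. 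Use these rainbow embeddings as the edges of an auxiliary $k$-partite $k$-uniform hypergraph $\mathcal{H}^{\ast}$ on parts $W_1,\ldots,W_k$.

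Next, I would apply the Erd\H{o}s Box Theorem to $\mathcal{H}^{\ast}$: for $n$ large there exist $U_i\subseteq W_i$ with $|U_i|=t$ such that every transversal $(u_1,\ldots,u_k)\in U_1\times\cdots\times U_k$ is an edge of $\mathcal{H}^{\ast}$, i.e.\ $\{u_1,\ldots,u_k\}$ spans an induced copy of $H$ in $G$ with $u_i$ in the role of vertex $i$. The disjoint sets $U_1,\ldots,U_k$ then serve as the vertex classes of an $H[t]$ in $G$: whenever $\{i_1,i_2,i_3\}\in E(H)$ and $u_{i_j}\in U_{i_j}$, the triple $\{u_{i_1},u_{i_2},u_{i_3}\}$ is an edge of $G$, by the transversal property. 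This contradicts the hypothesis that $G$ is $H[t]$-free and finishes the proof.

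The only non-trivial ingredient is the Erd\H{o}s Box Theorem itself, whose standard proof is a supersaturation / iterated averaging argument in the spirit of K\H{o}v\'ari--S\'os--Tur\'an; as this is entirely classical and independent of the specific $H$, I would cite it rather than reprove it. The main conceptual point to watch for is the distinction between induced and non-induced copies: $p(H,G)$ counts induced copies, but $H[t]$ need only appear as a (not necessarily induced) subgraph, so the induced copies supplied by the box theorem are strictly stronger than what is required, and no extra argument is needed to pass from one to the other.
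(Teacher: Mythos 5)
Your argument is correct, and it is essentially the same argument the paper relies on: the paper does not prove Theorem~\ref{blow} itself but cites Section 2 of Keevash's survey, where the statement is established exactly by this classical route of supersaturation (your random partition into $k=|V(H)|$ classes to get rainbow embeddings) followed by the Erd\H{o}s box theorem for $k$-partite $k$-uniform hypergraphs, and your handling of the induced-versus-subgraph distinction is also right. One harmless quibble: each induced copy of $H$ yields $|\operatorname{Aut}(H)|$ labeled embeddings, not $k!/|\operatorname{Aut}(H)|$, but since either quantity is at least $1$ the lower bound of order $n^k$ labeled embeddings, and hence the whole proof, is unaffected.
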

For a vertex $v$, let us denote copies of $v$ by $v^1,v^2,\ldots$. Proposition~\ref{noK4-} then follows from Theorem~\ref{blow} since $C_5^-$ is in $K_4^-[2]$ as can be seen by the sequence
$13243^11$.
In~\cite{balogh2023turan}, 
    it is shown that 
    \begin{proposition}\label{clTurgen}
        Every $C_\ell^-$ with $\ell\ge 7$  and $\ell$ not divisible by $3$ is contained in a blow-up of $C_5^-$.
    \end{proposition}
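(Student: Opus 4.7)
The plan is to construct an explicit homomorphism from $C_\ell^-$ into a blow-up of $C_5^-$, i.e.\ a map $\phi:[\ell]\to[5]$ such that every edge $\{i,i+1,i+2\}$ of $C_\ell^-$ (indices cyclic mod $\ell$) is sent to a triple of three distinct colors forming an edge of $C_5^-$. The edges of $C_5^-$ are the four consecutive triples $\{1,2,3\},\{2,3,4\},\{3,4,5\},\{4,5,1\}$ in the cyclic ordering of $[5]$; only the triple $\{5,1,2\}$ is missing. So I need to color cyclically-ordered positions $1,\ldots,\ell$ by $[5]$ so that every window of three consecutive colors lies among those four triples, with no constraint on the single window that corresponds to the missing edge of $C_\ell^-$. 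Distinctness of the embedded vertices is automatic, since a blow-up admits arbitrarily many copies per part.

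I would split into two cases according to $\ell\pmod 3$. For $\ell=3k+2$ with $k\ge 1$, use
\[
\phi=(\underbrace{1,2,3,1,2,3,\ldots,1,2,3}_{k\text{ copies of }(1,2,3)},\,4,\,5).
\]
Every window inside the $(1,2,3)$-block is a cyclic shift of $\{1,2,3\}$; the two splice windows are $\{2,3,4\}$ and $\{3,4,5\}$; and the wrap-around window $\{c_{\ell-1},c_\ell,c_1\}$ is $\{4,5,1\}$. The only unused window $\{c_\ell,c_1,c_2\}=\{5,1,2\}$ matches exactly the missing edge of $C_\ell^-$. For $\ell=3k+1$ with $k\ge 2$ (forced by $\ell\ge 7$), use
\[
\phi=(\underbrace{1,4,5,1,4,5,\ldots,1,4,5}_{k-1\text{ copies of }(1,4,5)},\,3,\,4,\,2,\,3).
\]
Windows inside the $(1,4,5)$-block equal $\{4,5,1\}$; the splice and tail windows are in order $\{4,5,3\},\{5,3,4\},\{3,4,2\},\{4,2,3\}$, which are $\{3,4,5\},\{3,4,5\},\{2,3,4\},\{2,3,4\}$; and the wrap-around window is $\{2,3,1\}=\{1,2,3\}$. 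All are edges of $C_5^-$, and the unchecked wrap $\{c_\ell,c_1,c_2\}=\{3,1,4\}$ corresponds to the missing edge of $C_\ell^-$.

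The main obstacle is finding the second pattern: the naive coloring $\phi(i)=((i-1)\bmod 5)+1$ always forces the forbidden triple $\{5,1,2\}$, and a period-$3$ coloring on only three colors works only when $3\mid\ell$. The systematic way to locate a good pattern is to study the digraph whose vertices are ordered pairs $(c_i,c_{i+1})$ and whose arcs $(c_i,c_{i+1})\to(c_{i+1},c_{i+2})$ correspond to valid triples $\{c_i,c_{i+1},c_{i+2}\}$. One quickly finds the $3$-cycle $(1,4)\to(4,5)\to(5,1)\to(1,4)$, which can be looped any number of times and then spliced into the short tail $3,4,2,3$ that reconnects to color $1$ through the edge $\{1,2,3\}$. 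Once the patterns are written down, verification is a routine finite check of the listed windows.
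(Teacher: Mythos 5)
Your proof is correct, and it takes a genuinely different route from the paper's. The paper proceeds by induction on $\ell$ in steps of $3$: it first exhibits an explicit copy of $C_7^-$ inside $C_5^-[2]$ (via the string $13243^154^11$), then shows $C_{k+3}^-$ lies inside $C_k^-[2]$ for every $k\ge 5$ (via $1231^12^13^14\ldots k1$), and concludes by observing that a blow-up of a blow-up is again a blow-up. You instead write down a single closed-form homomorphism $\phi\colon[\ell]\to[5]$ for each residue class $\ell\equiv 1,2\pmod 3$, verify that every window of three consecutive positions maps to an edge of $C_5^-$, and observe that the unchecked window lands precisely on the missing edge $\{\ell,1,2\}$; distinctness is then free in a sufficiently large blow-up. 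I checked both patterns, including the wrap-around windows and the boundary case $\ell=7$ (where the $(1,4,5)$-block degenerates to a single copy), and they work. The trade-off is modest: the paper's recursive step is shorter to state and reuses the same blow-up lemma, while your direct description avoids the iterated-blow-up bookkeeping and makes the homomorphism to $C_5^-$ completely explicit. Your remark about the transition digraph on pairs $(c_i,c_{i+1})$ is a clean way to see where the $(1,4,5)$-cycle comes from, though it isn't needed once the patterns are written down and verified.
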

    \begin{proof}
        We paraphrase the argument, and first note that $13243^154^11$ gives a $C_7^-$ in $C_5^-[2]$.
    Next, observe that $C_{k+3}^-$ is contained in $C_k^-[2]$ by considering the string $1231^12^13^14\ldots k1$. The Claim now follows by induction starting in $C_5^-$ and $C_7^-$.
    \end{proof}

\begin{proof}[Proof of Theorem~\ref{Turangen}]
    To prove the theorem, note that Theorem~\ref{blow} and Proposition~\ref{clTurgen} imply that for every $\ell\ge 7$ not divisible by $3$, every $C_\ell^-$-free graph has $C_5^-=o(1)$. By the Hypergraph Removal Lemma, we can delete $o(n^3)$ edges to destroy all copies of $C_5^-$, and then Theorem~\ref{Turan} implies that $ex\left(C_\ell^-,n\right)\le (\tfrac14+o(1){n\choose 3}$. As $H_n$ does not contain any copies of $C_\ell^-$, the theorem follows.
\end{proof}

``Graphons, short for graph functions, are the
limiting objects for sequences of large, finite
graphs with respect to the so-called cut metric.
They were introduced and developed by Borgs,
Chayes, Lov\'asz, S\'os, Szegedy,
and Vesztergombi in \cite{Borgs} and \cite{LovSze}.'' A short explainer by Glasscock, from which we cited the previous sentence, can be found in \cite{Glass}, and an extensive treatment by Lov\'asz appeared in \cite{LovaszGL}.

Later in \cite{ElekSze}, Elek and Szegedy 
developed the corresponding theory of hypergraphons, which are limit objects of hypergraphs. Similarly and closely related to the Hypergraph Regularity Lemma, stating the definition of a hypergraphon is beyond the scope of this paper, an introduction is contained in \cite{LovaszGL}. For our purposes, it is enough to consider the following theorem.
\begin{theorem}[\cite{ElekSze}]
    Let $(P_n)$ and $(Q_n)$ be two sequences of $r$-uniform hypergraphs on $n$ vertices. Then $(P_n)$ and $(Q_n)$ converge to the same unique hypergraphon, if and only if for every $r$-uniform hypergraph $H$, $\lim_{n\to\infty} p(H,P_n)=\lim_{n\to\infty} p(H,Q_n)$.
\end{theorem}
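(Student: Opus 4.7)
The plan is to prove a double implication via the standard graphon-style trilogy of \emph{compactness}, \emph{counting}, and \emph{separation of points}, adapted to the hypergraph setting. Throughout, convergence of a hypergraph sequence means convergence in the cut (pseudo-)distance of the associated hypergraphons, and one works modulo the equivalence generated by measure-preserving rearrangements of the coordinates and by symmetrization across lower-order variables.

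For the easy direction, I would invoke a \emph{counting lemma} for hypergraphons: for every finite $r$-uniform $H$, the induced density functional $W \mapsto t_{\mathrm{ind}}(H,W)$ is continuous in the cut distance on the space of $r$-hypergraphons. If $(P_n)$ and $(Q_n)$ both converge to the same limit $W$, then
\[
\lim_{n\to\infty} p(H,P_n) = t_{\mathrm{ind}}(H,W) = \lim_{n\to\infty} p(H,Q_n)
\]
for every $H$, which is exactly the density conclusion. The counting lemma itself is obtained by expanding the density as a bounded multilinear integral against the kernels defining the hypergraphon, and telescoping: a small cut-distance perturbation of one face (one coordinate tuple) changes the integral by at most a constant multiple of that perturbation, and summing over the finitely many faces of $H$ yields continuity.

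For the converse, I would use compactness. The key fact is that the space of $r$-hypergraphons, modulo measure-preserving transformations, is \emph{compact} in the cut distance. Assuming this, fix arbitrary subsequences $P_{n_k}$ and $Q_{m_k}$. By compactness, each has a further subsequence converging to some hypergraphons $W_1$ and $W_2$. The counting lemma then forces
\[
t_{\mathrm{ind}}(H,W_1) = \lim_k p(H,P_{n_k}) = \lim_k p(H,Q_{m_k}) = t_{\mathrm{ind}}(H,W_2)
\]
for every $H$. A separation-of-points result (moment-type uniqueness: a hypergraphon is determined, up to the natural equivalence, by the collection of all its induced subgraph densities) then identifies $W_1$ and $W_2$. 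Since every convergent subsequence of $(P_n)$ (respectively $(Q_n)$) has the same limit, each full sequence converges, and to the same hypergraphon.

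The main obstacle, and the deepest piece, is compactness of the hypergraphon space. This is the hypergraph analog of the Lov\'asz--Szegedy compactness theorem and is proved via the hypergraph regularity lemma of R\"odl--Nagle--Skokan--Schacht--Kohayakawa and Gowers: any sequence of hypergraphons admits a sequence of regular partitions whose step-kernels form a Cauchy sequence in cut distance after suitable identifications, and a diagonal/martingale argument extracts the limit hypergraphon. Point-separation is comparatively standard once one has the associated moment machinery, so essentially all of the analytic weight of the theorem is carried by the compactness statement, which in turn rests on hypergraph regularity. Both ingredients are established in~\cite{ElekSze}, and together they furnish the equivalence claimed in the theorem.
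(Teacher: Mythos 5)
The paper cites this theorem from Elek and Szegedy without giving a proof of its own, so there is no in-paper argument to compare against; the judgment has to be about whether your sketch is a viable route to the result attributed to~\cite{ElekSze}. Your architecture --- continuity of density functionals in cut distance (counting lemma) for the forward direction, and compactness of the hypergraphon space together with a moment-determination/separation-of-points argument for the converse --- is the standard skeleton and is correct at the level of detail you give. Two caveats worth flagging. First, the compactness step you describe (iterated regularity partitions, step-kernels Cauchy in cut distance, martingale/diagonal extraction) is the Lov\'asz--Szegedy-style construction, which is indeed how later expositions derive hypergraphon compactness from hypergraph regularity; Elek and Szegedy's original proof, however, goes through ultraproducts and Loeb measures, a genuinely different mechanism, so you should not attribute the regularity-lemma construction to~\cite{ElekSze} as written. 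Second, since $p(H,\cdot)$ in this paper denotes \emph{induced} density, the forward direction needs the routine inclusion--exclusion pass between induced and homomorphism densities before the usual counting lemma applies; this is minor but should be stated. With those adjustments the outline is sound.
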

It follows easily from an inductive application of Theorem~\ref{thmmain2} that $(H_n)$ and $(G_n)$ converge to the same hypergraphon $W$. In fact, we can label their vertices in a way that their edit distance is $o(n^3)$, a property stronger than their convergence to the same limit.

We say that a hypergraphon $R$ is finitely forcible if there exists a finite set $\{P_1,P_2,\ldots,P_k\}$ of $r$-uniform hypergraphs and real numbers $p_1,\ldots,p_k\in [0,1]$ so that for all sequences $(Q_n)$ of $r$-uniform hypergraphs on $n$ vertices, $\forall i:\lim p(P_i,Q_n)=p_i$ implies that $R$ is the limit of $(Q_n)$. In our next statement, we show that $W$ is finitely forcible with only two subhypergraph densities.

\begin{theorem}\label{thmlimit}
    Let $(Q_n)$ be a sequence of $3$-uniform hypergraphs on $n$ vertices with\\ $\lim p(C_5^-,Q_n)=0$ and $\lim p(K_3,Q_n)=\frac14$. Then $(Q_n)$ converges to $W$, the limit of $(H_n)$.
\end{theorem}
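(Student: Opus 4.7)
The strategy is to show that $Q_n$ and $H_n$ have edit distance $o(n^3)$; then the Elek--Szegedy theorem quoted just before the statement immediately gives $\lim p(H,Q_n)=p(H,W)$ for every $3$-uniform $H$. The argument has three conceptual parts: reducing $Q_n$ to a $\{C_5^-,K_4^-\}$-free near-extremal graph, running a stability version of Lemma~\ref{thmmain2} on it to pin down its top-level structure, and iterating to control all scales.

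From the hypothesis $p(C_5^-,Q_n)=o(1)$, I would first produce $o(n^3)$ edge edits that destroy all copies of $C_5^-$. A non-induced copy of $C_5^-$ sits inside some $5$-vertex set whose induced subgraph is a supergraph $H$ of $C_5^-$; combining the asymptotic extremality coming from $\lim p(K_3,Q_n)=\tfrac14=\pi(C_5^-)$ with the Hypergraph Removal Lemma applied in turn to each such $H$ (or, equivalently, with the induced version of the removal lemma) yields a $C_5^-$-free graph $Q_n^\circ$. Proposition~\ref{noK4-} then gives $o(n^3)$ copies of $K_4^-$ in total, so another application of the removal lemma produces $Q_n^\star$, a $\{C_5^-,K_4^-\}$-free graph at edit distance $o(n^3)$ from $Q_n$, whose edge density is $\tfrac14+o(1)$ by the second hypothesis.

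The proof of Lemma~\ref{thmmain2} can now be re-executed on $Q_n^\star$. All flag algebra inequalities used there (Lemmas~\ref{edgeBound} and~\ref{flag}, Claim~\ref{f1f2}, the bounds in Claim~\ref{newbounds}, and the local $L(v)$-model) only require the $\{C_5^-,K_4^-\}$-free condition together with edge density $\tfrac14+o(1)$, and thus go through verbatim. The almost-regularity statement in Claim~\ref{deg}, originally obtained by a symmetrization argument that used extremality of $G_n$, is re-established here by deleting an $o(n)$-sized set of atypical-degree vertices, which costs only $o(n^3)$ additional edits. The extremality-based steps in Lemma~\ref{lemMain}, Claim~\ref{maxdegree}, and Claim~\ref{balance} then soften into asymptotic variants: one obtains a partition $V(Q_n^\star)=X_1\cup X_2\cup X_3$ with $|X_i|=\tfrac{n}{3}+o(n)$ and all but $o(n^3)$ edges spanning the three parts. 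Iterating the argument inside each $X_i$, exactly as in the discussion preceding Theorem~\ref{exno}, costs a geometric series of $o(n^3)$ edits summing to $o(n^3)$, placing $Q_n$ at edit distance $o(n^3)$ from $H_n$ and completing the proof.

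The main obstacle is producing the genuinely stability-flavored version of Lemma~\ref{thmmain2}: the original proof invokes exact extremality at several delicate points — most notably the symmetrization behind Claim~\ref{deg} and the swap/move arguments in Claims~\ref{maxdegree} and~\ref{balance} — and each of these has to be softened to tolerate an $o(n^3)$ deficit while still forcing the structural conclusion. A secondary subtlety is converting the induced-density hypothesis $p(C_5^-,Q_n)=o(1)$ into a bound on non-induced copies, where the asymptotic extremality $\lim p(K_3,Q_n)=\tfrac14$ is what makes the conversion possible by ruling out a large density of proper $5$-vertex supergraphs of $C_5^-$.
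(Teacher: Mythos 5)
Your proposal follows essentially the same route as the paper: apply the removal lemma (twice, via Proposition~\ref{noK4-}) to pass to a $\{C_5^-,K_4^-\}$-free graph at edit distance $o(n^3)$, delete an $o(n)$-sized set of atypical-degree vertices to restore the near-regularity that Claim~\ref{deg} supplied in the extremal setting, re-run the argument of Lemma~\ref{thmmain2} up to a softened Claim~\ref{maxdegree} to obtain a near-balanced partition with $f_1,f_2=o(1)$, and then iterate inside the three parts. One small remark: Claim~\ref{balance} is not actually needed here (convergence to $W$ only requires $x_i=\tfrac13+o(1)$, not exact balance, and the paper indeed stops its adaptation before it), and the mechanism you offer for converting the induced hypothesis $p(C_5^-,Q_n)\to 0$ into a non-induced bound --- that the edge density $\tfrac14$ ``rules out a large density of proper supergraphs of $C_5^-$'' --- is not justified as stated (a graph of edge density $\tfrac14$ can a priori still carry a positive density of, say, induced $C_5$'s), whereas the paper instead invokes Theorem~\ref{blow} together with the structure of the supergraphs of $C_5^-$.
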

\begin{proof}
    Using the Hypergraph Removal Lemma, Theorem~\ref{blow}, Proposition~\ref{clTurgen}, and the Hypergraph Removal Lemma a second time, we can delete $o(n^3)$ edges from $Q_n$ to construct $\{C_5^-,K_4^-\}$-free hypergraphs $Q_n'$ with $p(H,Q_n)=p(H,Q_n')+o(1)$ for every $3$-uniform $H$. Theorem~\ref{Turan} implies that $\Delta (Q_n')=(\tfrac14+o(1)){n\choose 2}$. Let $\eps=\eps(n)>0$ with $\lim_{n\to\infty} \eps(n)=0$ to be chosen later, and define
    \[
    S_n=\left\{v\in V(Q_n):d(v)<(e(Q_n')-\eps){n\choose 2}\right\}
    \]
    be the set of small degree vertices. If $\rho {n\choose 2}=\Delta(Q_n')-e(Q_n'){n\choose 2}$, then $|S_n|\eps\le n\rho<n\rho+1$, so choosing $\eps=\sqrt{\rho+\tfrac1n}=o(1)$ gives $|S_n|<\eps n=o(n)$. Let $Q_n''=Q_n'-S_n$, then $\Delta(Q_n'')-\delta(Q_n'')=o(n^2)$. 

    With this and a bit of care handling the $o(1)$ terms, we can apply all steps  of the proof of Theorem~\ref{thmmain2} until before Claim~\ref{maxdegree} to $Q_n''$ instead of $\bar{G}_n$. Claim~\ref{maxdegree} is the first time we use extremality of $G_n$ in the proof.
    
    With essentially the same proof as in Claim~\ref{maxdegree}, we can instead prove that $f_1=o(1)$ and $f_2=o(1)$. The same then still holds if we add $S_n$ to $X_1$. This suffices to show that $x_i=\frac13+o(1)$, and we have therefore, after adding back the $o(n^3)$ edges we removed in the very beginning, the following claim.
    \begin{claim}
        The vertices of $Q_n$ can be partitioned $V(Q_n)=X_1\cup X_2\cup X_3$ with $x_i=\frac13+o(1)$ such that $f_1=o(1)$ and $f_2=o(1)$, and the induced hypergraphs on the $X_i$ each have edge density $e(X_i)=\frac14+o(1)$. 
    \end{claim}
    Applying this claim inductively to the $X_i$ proves the theorem.
\end{proof}

\section{Proofs of Theorems~\ref{thmmain} and~\ref{exno} 
}\label{main}

\restatethmmain*

\begin{proof}[Proof of Theorem~\ref{thmmain}]
    By Theorem~\ref{thmlimit}, $(F_n)$ converges to $H_n$. It remains to show that this convergence is even stronger, and $F_1=F_2=\emptyset$ if we more carefully partition $S_n$ from the proof of Theorem~\ref{thmlimit} across the $X_i$. Once we prove this, Claim~\ref{balance} and thus the full theorem follows.

We define $F_n',F_n'',S_n$ as in the proof of Theorem~\ref{thmlimit} and proceed in the proof until Claim~\ref{maxdegree}. Add back the missing edges of $F_n$, and partition $S_n$ across the $X_i$ to maximize $6x_1x_2x_3-f_2$. This only affected $o(n^3)$ edges, so we still have $f_1=o(1)$ and $f_2=o(1)$. If $f_2>f_1$, we can increase the number of edges without creating a $C_\ell^-$ if we replace all edges in $F_1$ by all edges in $F_2$, a contradiction showing that $f_2\le f_1$.

In the following, we describe the argument for $\ell=5$, but it is straight forward to adapt it to general $\ell$.

Let us now consider the link of a vertex $L(v)$ again, for $\{A,B,C\}=\{X_1,X_2,X_3\}$. We get a new model with the new bounds we know for any vertex $v$, by symmetry we may assume $v\in C$. As flag algebra computes in the limit, we omit $o(1)$ terms.

\begin{enumerate}[(a)]
    \item Three sets of vertices: $A,B,C$ 
    \item \sout{$\x{A}+\x{B}+\x{C}=1$}
    \item $\x{A},\x{B},\x{C}=\frac13$
    \item \sout{$0.10649< \x{A}*\x{B},\x{A}*\x{C},\x{B}*\x{C}<0.11354$}\label{d}
    \item \sout{$0\le \x{T}<0.01022$}
    \item \sout{triangle-free}\label{f}
    \item $\e{AB}\ge \e{AC},\e{BC}$\label{g}
    \item $\e{AA}+\e{BB}+\e{CC}+\e{AB}+\e{AC}+\e{BC}= \exx{}{} =\frac14$\label{h}
    \item \sout{$6\x{A}*\x{B}*\x{C}+ 0.196\x{T} + 0.366(1- \x{T})\x{T} - 0.22118 \ge 0$}
    \item \label{j}
        $\p{AAB}+\kkk{AAB}= 0\\ 
        \p{ABB}+\kkk{ABB}= 0$
    \item \label{k}
    $\e{AC}=0\\
        \e{BC}=0$
    \item \label{l}
        $\e{CC}\le \frac14 \x{C}^2=\frac1{36}\\
        \e{AA},\e{BB}\le \frac1{36}$
\end{enumerate}
Let us give some explanations. We do not need (\ref{d}) anymore. 
The link of $v$ may contain triangles. Denoting the corresponding triangle densities by $k_3^{AAB}$ and $k_3^{ABB}$, (\ref{j}) follows from the same argument as before. Similarly, (\ref{k}) follows together with (\ref{g}). Note that neither of these arguments used that $G_n$ is $K_4^-$-free. 
Finally, (\ref{l}) follows for $e^{CC}$ with the same argument, but there are more straight forward ways to prove it. Symmetrical arguments apply to $e^{AA}$ and $e^{BB}$.

From (\ref{j}) we see that all but $o(n)$ vertices in $A$ have either $o(n)$ neighbors in $A$ or $o(n)$ neighbors in $B$. A symmetric statement is true for vertices in $B$. Denoting the number of vertices in $A$ with more than $o(n)$ neighbors in $A$ by $s_An$, and  the number of vertices in $B$ with more than $o(n)$ neighbors in $B$ by $s_Bn$, we have   $0\le s_A,s_B\le \frac13$, and
\[
e^{AA}+e^{AB}+e^{BB}\le s_A^2+2(\tfrac13-s_A)(\tfrac13-s_B)+s_B^2+o(1).
\]
This counts all the $e^{AA}$- and $e^{BB}$-edges in two cliques of sizes $s_An$ and $s_Bn$, and the $e^{AB}$-edges in a complete bipartite graph not overlapping the cliques. But then, (\ref{h}) and (\ref{l}) imply that $e^{AA}+e^{AB}+e^{BB}=\frac29+o(1)$, which is only possible for $s_A=s_B=o(1)$ (noting that $e^{AA}=\frac19+o(1)$ is ruled out by (\ref{j})), and $\e{AB}=2\x{A}*\x{B}+o(1)$.
We conclude that 
\begin{claim}\label{smallF2}
 The number of  non-edges in $F_2$ containing  $v$ is $o(n^2)$.
\end{claim}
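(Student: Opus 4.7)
Since $v\in C$ by the choice of $\{A,B,C\}$, every funky non-edge through $v$ has the form $\{v,a,b\}$ with $a\in A$ and $b\in B$; hence the non-edges of $F_2$ containing $v$ are in bijection with the non-edges of the link $L(v)$ having one endpoint in $A$ and one in $B$. Counting non-edges in $F_2$ through $v$ therefore reduces to counting $AB$-type non-edges in $L(v)$.

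The plan is to extract $\e{AB}=\tfrac{2}{9}+o(1)$ from the paragraph immediately preceding the claim and then translate it into an absolute count. That paragraph combined the upper bound
\[
e^{AA}+e^{AB}+e^{BB}\le s_A^2+2\bigl(\tfrac{1}{3}-s_A\bigr)\bigl(\tfrac{1}{3}-s_B\bigr)+s_B^2+o(1)
\]
with the matching lower bound $e^{AA}+e^{AB}+e^{BB}=\tfrac{2}{9}+o(1)$ coming from items (h), (k), and (l), and then used (j) to exclude the corner $s_A=s_B=\tfrac{1}{3}$, which forces $s_A,s_B=o(1)$. Consequently $\e{AA},\e{BB}=o(1)$ and $\e{AB}=\tfrac{2}{9}+o(1)$, where densities are normalized by $\binom{n}{2}$. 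In absolute counts, $L(v)$ contains $\bigl(\tfrac{1}{9}+o(1)\bigr)n^2$ edges between $A$ and $B$, while $|A|\cdot|B|=\bigl(\tfrac{1}{9}+o(1)\bigr)n^2$; subtracting yields at most $o(n^2)$ non-edges of type $AB$, as desired.

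The main difficulty has already been resolved upstream in the squeeze pinning $s_A,s_B=o(1)$; what remains for Claim~\ref{smallF2} is a straightforward change of normalization, requiring no new flag-algebra computation or combinatorial input. The bound is uniform in $v$, since the link model (a)--(l) and all the global inputs it relies on ($x_A,x_B,x_C=\tfrac{1}{3}+o(1)$, the edge density $\tfrac14$, and $C_5^-$-freeness) apply at every vertex.
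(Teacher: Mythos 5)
Your proof is correct and takes essentially the same approach as the paper: the paper presents Claim~\ref{smallF2} as a direct consequence of the preceding paragraph, which pins down $e^{AB}=\tfrac29+o(1)$, and you simply make explicit the bijection between $F_2$-non-edges through $v$ and $AB$-non-edges of $L(v)$ together with the routine translation from density to absolute count.
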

The remainder of the proof is again similar to Lemma~\ref{thmmain2}, but easier. Assume for the sake of contradiction that $F_1\ne \emptyset $, and $u_Av_Av_B\in F_1$. Then for any choice of $w_B,w_C$, $u_Av_Bv_Aw_Cw_Bu_A$ is a $C_5^-$ unless at least one of $u_Aw_Bw_C,v_Aw_Bw_C,v_Av_Bw_C$ is missing. By Claim~\ref{smallF2}, $u_Aw_Bw_C$ and $v_Aw_Bw_C$ can miss only $o(n^2)$ times, so for the remaining $\frac{n^2}9+o(n^2)$ choices, $v_Av_Bw_C\in F_2$ (and in fact $u_Av_Bw_C\in F_2$ as well considering $v_Av_Bu_Aw_Cw_Bv_A$). In other words, $u_Av_Av_B$ intersects at least $\frac23 n+o(n)$ non-edges in $F_2$ in two vertices. Counting this structure for all edges in $F_1$, and noting that $f_1\ge f_2$, this implies that there exists some non-edge $v_Av_Bv_C\in F_2$ which intersects at least $\frac23 n+o(n)$ edges in $F_1$.

In particular, we may assume by symmetry that $v_Av_B$ is in at least $\frac19n+o(n)$ edges of type $v_Av_Bu_B$. For any pair $w_A,w_C$, and any $u_B$ with $v_Av_Bu_B\in F_1$, at least one of $w_Au_Bw_C$, $v_Au_Bw_C$, $w_Av_Bw_C$ must be missing. But there are $o(n^3)$ non-edges of type $w_Au_Bw_C$ in $F_2$. Further, by Claim~\ref{smallF2}, there are $o(n^2)$ non-edges of types $v_Au_Bw_c$ and $w_Av_Bw_c$, each accounting for at most $\frac13n+o(n)$ choices of the at least $\frac1{81}n^3+o(n^3)$ choices of $w_A,u_B,w_C$. This contradiction shows that $F_1=F_2=\emptyset$.

The only remaining Claim~\ref{balance} from Lemma~\ref{thmmain2} follows with the same proof, establishing Theorem~\ref{thmmain}.
\end{proof}

\restateexno*

\begin{proof}[Proof of Theorem~\ref{exno}]
    As described in Section~\ref{intro}, for some large enough $M$, for every $k$ and $3^{k+1}M>n\ge 3^kM$, $F_n$ agrees on the first $k$ levels of $H_n$ from the outside in. This shows that $F_n$ and $H_n$ are isomorphic up to changing $3^k{M\choose 3}=O(n)$ edges.

    Thus, it suffices to show that 
    \[
\left| ||H_n||-\frac{n^3}{24}\right| <\frac16n\log_3n+O(n).
    \]
    For this, we use induction on $n$ and consider three cases $n=3k$, $n=3k+1$, and $n=3k+2$. 
    Note that the start of the induction is trivial due to the $O(n)$ term.

    Then we have for $n=3k$,
    \begin{align*}
        \left| ||H_{3k}||-\frac{(3k)^3}{24}\right|&= \left| k^3+3||H_k||-\frac{(3k)^3}{24}\right|\\
        &\le  \left| k^3+3\frac{k^3}{24}-\frac{(3k)^3}{24}\right|+\frac36k\log_3k+O(3k)\\
        &< \frac16n\log_3n+O(n).
    \end{align*}
Similarly for $n=3k+1$,
\begin{align*}
        \left| ||H_{3k+1}||-\frac{(3k+1)^3}{24}\right|=& \left| k^3+k^2+2||H_k||+||H_{k+1}||-\frac{(3k+1)^3}{24}\right|\\
        \le&  \left| k^3+k^2+2\frac{k^3}{24}+\frac{(k+1)^3}{24}-\frac{(3k+1)^3}{24}\right|\\
        &+\frac16(3k+1)\log_3(k+1)+O(3k+1)\\
        <&~  \frac{6k}{24}+\frac16n(\log_3n-\tfrac12)+O(n)\\
        <&~ \frac16n\log_3n+O(n),
    \end{align*}
and for $n=3k+2$,
    \begin{align*}
        \left| ||H_{3k+2}||-\frac{(3k+2)^3}{24}\right|=& \left| k^3+2k^2+k+||H_k||+2||H_{k+1}||-\frac{(3k+2)^3}{24}\right|\\
        \le&  \left| k^3+2k^2+k+\frac{k^3}{24}+2\frac{(k+1)^3}{24}-\frac{(3k+2)^3}{24}\right|\\
        &+\frac16(3k+2)\log_3(k+1)+O(3k+2)\\
        <&~  \frac{6k+6}{24}+\frac16n(\log_3n-\tfrac23)+O(n)\\
        <&~ \frac16n\log_3n+O(n).
    \end{align*}
\end{proof}

\section*{Acknowledgments}
This work used the computing resources at the Center for Computational Mathematics, University of Colorado Denver, including the Alderaan cluster, supported by the National Science Foundation award OAC-2019089.

After publishing a preprint of this manuscript, we learned that Bodn\'{a}r, Le\'on, Liu, and Pikhurko are also working on this problem and are close to a final independent manuscript.

\bibliographystyle{abbrv}
\bibliography{references}

\end{document}